\colorlet{LightBlue}{blue!50}
\newtheorem{theorem}{Theorem}[section]
\newtheorem{proposition}[theorem]{Proposition}
\newtheorem{corollary}[theorem]{Corollary}
\newtheorem{lemma}[theorem]{Lemma}
\theoremstyle{definition}
\newtheorem{definition}[theorem]{Definition}
\newtheorem{example}[theorem]{Example}
\theoremstyle{remark}
\newtheorem{remark}[theorem]{Remark}
\numberwithin{equation}{section}
\newcommand{\thmref}[1]{Theorem~\ref{#1}}
\newcommand{\lemref}[1]{Lemma~\ref{#1}}
\newcommand{\corref}[1]{Corollary~\ref{#1}}
\newcommand{\exref}[1]{Example~\ref{#1}}
\newcommand{\calH}{{\mathcal H}}
\newcommand{\calL}{{\mathcal L}}
\newcommand{\calP}{{\mathcal P}}
\newcommand{\calS}{{\mathcal S}}
\newcommand{\calX}{{\mathcal X}}
\newcommand{\CH}{\mathcal{CH}}
\newcommand{\F}{{\mathbb F}}
\newcommand{\R}{{\mathbb R}}
\newcommand{\from}{\colon \thinspace}
\newcommand{\param}{{\mathchoice{\mkern1mu\mbox{\raise2.2pt\hbox{$
\centerdot$}}
\mkern1mu}{\mkern1mu\mbox{\raise2.2pt\hbox{$\centerdot$}}\mkern1mu}{
\mkern1.5mu\centerdot\mkern1.5mu}{\mkern1.5mu\centerdot\mkern1.5mu}}}
\DeclareMathOperator{\Aut}{Aut}
\DeclareMathOperator{\Inn}{Inn}
\DeclareMathOperator{\MCG}{Map}
\DeclareMathOperator{\Homeo}{Homeo}
\DeclareMathOperator{\out}{Out}
\DeclareMathOperator{\Core}{Core}
\DeclareMathOperator{\Shadow}{Shadow}
\newcommand{\bM}{\underline{M}}
\newcommand{\bS}{\underline{S}}
\newcommand{\bX}{\underline{X}}
\newcommand{\balpha}{\underline{\alpha}}
\newcommand{\bbeta}{\underline{\beta}}
\newcommand{\bSigma}{\underline{\Sigma}}
\newcommand{\cv}{\calX_{\rm fill}}
\newcommand{\sphere}{\calS_{\rm fill}}
\newcommand{\Out}{\out(\F_n)}  
\begin{document}

\title{Uniform fellow traveling between surgery paths in the sphere graph}

  \author[M.~Clay]{Matt Clay}
  \address{Dept. of Mathematical Sciences\\ 
  University of Arkansas\\
  Fayetteville, Arkansas, USA 72701}
  \email{\tt mattclay@uark.edu}
  \thanks{The first author was partially funded by the Simons Foundation.}

  \author[Y.~Qing]{Yulan Qing}
  \address{Dept. of Mathematics\\ 
  University of Toronto\\
  Toronto, Ontario, Canada M5S 2E4}
  \email{\tt yulan.qing@utoronto.ca}
  
  \author[K.~Rafi]{Kasra Rafi}
  \address{Dept. of Mathematics\\ 
  University of Toronto\\
  Toronto, Ontario, Canada M5S 2E4}
  \email{\tt rafi@math.toronto.edu}
  \thanks{The second author was partially funded by NSERC Discovery grant 
  RGPIN 435885.}  
\date{\today}

\begin{abstract}
We show that the Hausdorff distance between any forward and any backward surgery paths
in the sphere graph is at most 2.  From this it follows that the Hausdorff distance between 
any two surgery paths with the same initial sphere system and same target sphere system 
is at most 4.  Our proof relies on understanding how surgeries affect the Guirardel core
associated to sphere systems. We show that applying a surgery is equivalent to 
performing a Rips move on the Guirardel core. 
\end{abstract}


\maketitle


\section{Introduction}\label{sec:intro}

In this paper, we study the surgery paths in the sphere graph.  Let $\bM$ be the connected sum of $n$ copies of $S^1 \times S^2$ (we reserve the notation $M$ for the universal cover of $\bM$ which is used more frequently in the body of the paper).  The vertices of the sphere graph are essential sphere systems in $\bM$ and edges encode containment (see Section~\ref{sec:sphere-splittings} for precise definitions). We denote the sphere graph by $\calS$ and the associated metric with $d_\calS$. It is known that the sphere graph $(\calS, d_\calS)$ is hyperbolic in the sense of Gromov \cite{mosher:HS, un:HH}.  The relationship between the optimal hyperbolicity constant and the rank of the fundamental group of $\bM$ (which is isomorphic to $\F_{n}$, the free group of rank $n$) is unknown.

Given a pair of (filling) sphere systems $\bS$ and $\bSigma$, there is a natural family of paths, called surgery paths, connecting them.  They are obtained by replacing larger and larger portions of spheres in $\bS$ with pieces of spheres in $\bSigma$. This process is not unique. Also, families of paths that start from $\bS$ with target $\bSigma$ are different from those starting from $\bSigma$ with target $\bS$.  It follows from \cite{un:HH} that surgery paths are quasi-geodesics.  Together with the hyperbolicity of the sphere graph, this implies that different surgery paths starting with $\bS$ and with target $\bSigma$ have bounded Hausdorff distance in the sphere graph.  The bound depends on the optimal hyperbolicity constant, which as stated above, does not have a good qualitative estimate.   

However, in this paper we show that, in any rank, any two surgery paths are within Hausdorff 
distance at most $4$ of each other.  This follows by comparing a surgery path that starts from $\bS$ with target $\bSigma$ to a surgery path starting from $\bSigma$ with target $\bS$. 

\begin{theorem} \label{Thm:Main}
Let $\bS$ and $\bSigma$ be two filling sphere systems and let 
\begin{equation*}
\bS= \bS_1, \bS_2, \ldots, \bS_m,  
\qquad  d_{\calS}(\bS_{m},\bSigma) \leq 1
\end{equation*}
be a surgery sequence starting from $\bS$ towards $\bSigma$ and 
\begin{equation*}
\bSigma= \bSigma_1, \bSigma_2, \ldots, \bSigma_{\mu},
\qquad d_{\calS}(\bSigma_{\mu},\bS) \leq 1
\end{equation*}
be a surgery sequence in the opposite direction. Then, for every $\bS_i$ there is a 
$\bSigma_j$ so that $d_\calS(\bS_i, \bSigma_j) \leq 2$.  
\end{theorem}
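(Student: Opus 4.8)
The plan is to reduce the distance estimate to a statement about disjointness and then to read both surgery sequences off a single Guirardel core. First I would observe that $d_\calS(\bS_i,\bSigma_j)\le 2$ holds as soon as $\bS_i$ and $\bSigma_j$ admit disjoint realizations: if they are disjoint then $\bS_i\cup\bSigma_j$ is again an essential sphere system that contains both, hence is adjacent to each of them in $\calS$, giving a path of length $2$. Thus the whole theorem follows from the combinatorial claim that \emph{for every $i$ there is a $j$ with $\bS_i$ and $\bSigma_j$ disjoint}.

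Next I would pass to the universal cover $M$ and replace each sphere system by its dual tree, writing $T_\bS$, $T_\bSigma$ and more generally $T_{\bS_i}$, $T_{\bSigma_j}$ for the trees dual to the intermediate systems. Under this dictionary two sphere systems are disjoint precisely when their dual trees are compatible, i.e.\ admit a common refinement, and by Guirardel's criterion this happens exactly when the core $\calC(T_{\bS_i},T_{\bSigma_j})$ contains no squares. So the claim becomes: for every $i$ there is a $j$ for which $\calC(T_{\bS_i},T_{\bSigma_j})$ is squareless.

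The engine is the identification of surgeries with Rips moves. Each forward surgery modifies $\bS$ toward $\bSigma$ by eliminating one intersection pattern, and on the level of cores this is a Rips move deleting a rectangle of $\calC(T_\bS,T_\bSigma)$ that is outermost on the $T_\bSigma$ side; symmetrically, each backward surgery deletes a rectangle outermost on the $T_\bS$ side. The squares of $\calC(T_\bS,T_\bSigma)$ correspond to the circles of $\bS\cap\bSigma$, and both sequences peel these squares off the \emph{same} core. I would make precise the bookkeeping statement that $\calC(T_{\bS_i},T_{\bSigma_j})$ is obtained from $\calC(T_\bS,T_\bSigma)$ by deleting exactly the squares accounted for by the forward moves through step $i$ together with those accounted for by the backward moves through step $j$. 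Granting this, for fixed $i$ let $j(i)$ be the least index for which the backward moves through step $j(i)$ remove every square not yet removed by the forward path through step $i$; such a $j(i)$ exists because the full backward sequence removes all squares, as it terminates within distance $1$ of $\bS$. Then $\calC(T_{\bS_i},T_{\bSigma_{j(i)}})$ is squareless, so $\bS_i$ and $\bSigma_{j(i)}$ are disjoint, and the reduction above finishes the proof.

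The main obstacle, and the step I would spend the most care on, is the bookkeeping claim that the two families of Rips moves act on a common and stable rectangle decomposition of $\calC(T_\bS,T_\bSigma)$, so that forward and backward deletions superimpose additively. Concretely I must check that a forward move and a backward move never compete for the same square in an incompatible way, that after a forward move the surviving part of the core is genuinely the core of the new pair of trees (this is where the identity "surgery $=$ Rips move" does the real work), and that the same remains true when the two sides are surgered simultaneously. A secondary point needing attention is the filling hypothesis: it guarantees that the trees in play are dual to honest sphere systems and that a squareless core really does certify disjointness rather than a degenerate configuration, and it controls the endpoint conditions $d_\calS(\bS_m,\bSigma)\le 1$ and $d_\calS(\bSigma_\mu,\bS)\le 1$ that seed the matching.
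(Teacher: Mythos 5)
Your use of the surgery-equals-Rips-move dictionary and of the maps $k_i$, $\kappa_j$ matches the paper's machinery (Theorem~\ref{thm:rips=surgery}, Corollary~\ref{Cor:Surgery}, Proposition~\ref{Prop:Isomorphism}), but your endgame has a genuine gap. The bookkeeping claim you lean on --- that for \emph{all} $i,j$ the core $\Core(S_i,\Sigma_j)$ is obtained from $\Core(S,\Sigma)$ by deleting the squares removed by the forward moves through step $i$ together with those removed by the backward moves through step $j$ --- is exactly the content of Proposition~\ref{Prop:Isomorphism}, and that proposition carries the hypothesis~\eqref{eq:Union}: the two surviving images must still cover $\Core(S,\Sigma)$, equivalently no square may have been deleted by \emph{both} histories. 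Once $j$ passes the critical index where \eqref{eq:Union} fails, $S_i$ and $\Sigma_j$ need no longer be in normal form, new intersection circles between the $\Sigma$--portions $S_i^{\Sigma}$ and the $S$--portions $\Sigma_j^{S}$ can appear --- and these are invisible to $\Core(S,\Sigma)$, whose squares record only circles of $S\cap\Sigma$ --- so the core of the pair is no longer a subcomplex of $\Core(S,\Sigma)$ at all; as the paper notes, past this point a splitting move can change the core unpredictably and even increase its volume. Your additive-superposition picture is therefore unavailable in exactly the range where you invoke it.

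Concretely, write $F_i$ and $B_j$ for the sets of squares of $\Core(S,\Sigma)$ deleted by the forward and backward histories. Your $j(i)$ is the least $j$ with $F_i\cup B_{j(i)}$ equal to all squares; but to conclude that $\Core(S_i,\Sigma_{j(i)})$ is squareless you would also need $F_i\cap B_{j(i)}=\emptyset$ so that Proposition~\ref{Prop:Isomorphism} applies, i.e.\ $B_{j(i)}$ would have to be \emph{exactly} the complement of $F_i$. Nothing forces the backward rectangles to tile that complement so precisely, and in general they do not; indeed nothing in the setup guarantees that $\bS_i$ is ever disjoint from some $\bSigma_j$, which is why the theorem asserts only $d_\calS(\bS_i,\bSigma_j)\le 2$ rather than disjointness. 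The paper's proof is structured to avoid precisely this cliff: it fixes $i$, takes the \emph{largest} $j$ for which \eqref{eq:Union} still holds, examines the first backward rectangle that pokes into the forward-deleted region (via Corollary~\ref{Cor:Surgery}), and extracts from its boundary a single \emph{free edge} of $\Core(S_i,\Sigma_j)$ --- one sphere of one system disjoint from the entire other system --- which by Lemma~\ref{Lem:Isolated} already certifies distance at most $2$. The repair for your argument is thus to weaken the target from ``squareless core, hence disjoint systems'' to ``core with a free edge,'' harvested at the threshold where the two deletion regions first meet rather than beyond it.
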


Using this, we get the bound of 4 between paths with the same initial sphere system and same target sphere system.

\begin{theorem}\label{Thm:Main 2}
Let $\bS$ and $\bSigma$ be two filling sphere systems and let 
\begin{align*}
\bS &= \bS_1, \bS_2, \ldots, \bS_m ,
\qquad  d_{\calS}(\bS_{m},\bSigma) \leq 1 \\
\intertext{and}
\bS &= \bS'_1, \bS'_2, \ldots, \bS'_n,
\qquad d_{\calS}(\bS'_{n},\bSigma) \leq 1
\end{align*}
be two surgery sequences starting from $\bS$ towards $\bSigma$.  Then, for every $\bS_i$ there is a $\bS'_j$ so that $d_\calS(\bS_i, \bS'_j) \leq 4$.  
\end{theorem}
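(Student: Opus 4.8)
The plan is to deduce the statement from \thmref{Thm:Main} by introducing a single backward surgery sequence as a bridge between the two given forward sequences. Since $\bS$ and $\bSigma$ are both filling, I may fix a surgery sequence
\[
\bSigma = \bSigma_1, \bSigma_2, \ldots, \bSigma_\mu, \qquad d_\calS(\bSigma_\mu, \bS) \leq 1,
\]
running from $\bSigma$ towards $\bS$, and then compare each of the two forward sequences to this one fixed backward sequence.

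First I would observe that although \thmref{Thm:Main} is stated in a single direction, its hypotheses are symmetric under interchanging $\bS$ and $\bSigma$: a backward sequence from $\bSigma$ to $\bS$ is a forward sequence for the ordered pair $(\bSigma, \bS)$, while a forward sequence from $\bS$ to $\bSigma$ then plays the role of the backward one. Applying the theorem in both orderings therefore shows that each of the two forward sequences and the fixed backward sequence have Hausdorff distance at most $2$ in $(\calS, d_\calS)$; in particular every term $\bSigma_k$ lies within distance $2$ of some term of either forward sequence, and conversely every term of a forward sequence lies within distance $2$ of some $\bSigma_k$.

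The conclusion is then a triangle inequality. Given a term $\bS_i$ of the first forward sequence, \thmref{Thm:Main} applied to that sequence together with the backward sequence yields an index $k$ with $d_\calS(\bS_i, \bSigma_k) \leq 2$. The symmetric form, applied to the backward sequence together with the second forward sequence, yields an index $j$ with $d_\calS(\bSigma_k, \bS'_j) \leq 2$. Hence
\[
d_\calS(\bS_i, \bS'_j) \leq d_\calS(\bS_i, \bSigma_k) + d_\calS(\bSigma_k, \bS'_j) \leq 2 + 2 = 4,
\]
which is exactly the claimed bound.

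Essentially all of the geometric work has already been carried out in \thmref{Thm:Main}, so I do not anticipate a real obstacle. The only point that requires care is the passage from the one-sided statement of \thmref{Thm:Main} to the two-sided (Hausdorff) version used above, namely the assertion that every term of the backward sequence is close to a term of a forward sequence, and not merely the reverse direction as literally written. This is handled by the symmetry of the hypotheses under exchanging the roles of $\bS$ and $\bSigma$, and is already reflected in the Hausdorff-distance formulation stated in the abstract.
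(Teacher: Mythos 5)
Your proposal is correct and is essentially identical to the paper's own proof: both fix one backward surgery sequence from $\bSigma$ to $\bS$, apply Theorem~\ref{Thm:Main} twice (once in each ordering of the pair, exactly the role-swap you spell out), and conclude by the triangle inequality. Your explicit remark on the symmetry of the hypotheses is a fair elaboration of a step the paper leaves implicit, but it does not change the argument.
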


\begin{proof}
Fix two filling sphere systems $\bS$ and $\bSigma$ and surgery paths as in the statement of the theorem.  Let
\begin{equation*}
\bSigma= \bSigma_1, \bSigma_2, \ldots, \bSigma_{\mu}
\qquad d_{\calS}(\bSigma_{\mu},\bS) \leq 1
\end{equation*}
be a surgery sequence starting at $\bSigma$ towards $\bS$.  Given $\bS_{i}$, by Theorem~\ref{Thm:Main}, there is a $\bSigma_{k}$ such that $d_{\calS}(\bS_{i},\bSigma_{k}) \leq 2$.  Applying Theorem~\ref{Thm:Main} again, there is a $\bS'_{j}$ such that $d_{\calS}(\bSigma_{k},\bS_{j}) \leq 2$.  Thus $d_{\calS}(\bS_{i},\bS'_{j}) \leq 4$ as desired.
\end{proof}

The sphere graph is a direct analogue of the graph of arcs on a surface with boundary.  In fact, there is an embedding of the arc graph into the sphere graph. The arc graph is known to be uniformly hyperbolic \cite{aougab:UH, bowditch:UH, rafi:US, piotr:UH, sisto:AH}.  
Since the solution of many algorithmic problems for mapping class groups or
hyperbolic $3$--manifold that fibers over a circle rely on the action of mapping class 
group on various curve and arc complexes, the uniform hyperbolicity clarifies
which constant depend on the genus and which ones are genus independent. 
The uniform hyperbolicity of the sphere graph (or one of the other combinatorial complexes associated to $\Out$) is a central open question in the study of the group $\Out$, the group of outer automorphisms of the free group.  Note that \thmref{Thm:Main 2} is not sufficient to prove that the sphere graph is uniformly hyperbolic. 

\subsection*{Summary of other results}
The Guirardel core \cite{ar:Guirardel05} is a square complex associated to two trees 
equipped with isometric actions by a group, in our case $\F_{n}$.  This is an analogue of a 
quadratic differential in the surface case; 
the area of the core is the intersection number between the two associated sphere systems.  
Following \cite{ar:BBC10}, in Section~\ref{sec:core}, we describe how to compute the 
core using the change of marking map between the two trees. \lemref{lem:hull} gives 
a simple condition when a product of two edges is in the core, which will be used in 
future work to study the core. Also, in Section~\ref{sec:sphere-core} we define the
core for two sphere systems, $\Core(\bS, \bSigma)$ directly using the intersection 
pattern of the spheres and show this this object is isomorphic to the Guirardel core for 
the associated tree (\thmref{th:core to core}). Much of what is contained in these 
two sections is known to the experts, however, we include a self-contained exposition 
of the material since it is not written in an easily accessible way in the literature. 

Applying a surgery to a sphere system amounts to applying a splitting
move to the dual tree (see \exref{ex:rips}), however, not all splittings towards a given tree
come from surgeries. In general, applying a splitting move 
could change the associated core in unpredictable ways potentially increasing 
the volume of the core. We will show that (\thmref{thm:rips=surgery}) applying a surgery 
is equivalent to performing a Rips move on the Guirardel core. 
That is, there is a subset of all splitting paths between two trees that is natural from the 
point of view of the Guirardel core and it matches exactly with the set of splitting sequences 
that are associated to surgery paths. 

\subsection*{Outline of the proof} 
Our proof of Theorem~\ref{Thm:Main} analyses Guirardel core $\Core(\bS_i, \bSigma_j)$.
Generally, this does not have to be related to $\Core(\bS, \bSigma)$.
However, we show that, for small values of $i$ and $j$, the spheres 
$\bS_i$ and $\bSigma_j$ are still in normal form and $\Core(\bS_i, \bSigma_j)$ can 
be obtained from $\Core(\bS, \bSigma)$ via a sequence of vertical and horizontal Rips 
moves (Proposition~\ref{Prop:Isomorphism}).  For every $i$, there is the smallest $j$
where this breaks down which is exactly the moment the surgery path from $\bSigma$
to $\bS$ passes near $\bS_i$. The proof of Theorem~\ref{Thm:Main} is completed in 
Section~\ref{Sec:Proof}: 
for every $\bS_i$, apply enough surgery on $\bSigma$ until $\Core(\bS_i, \bSigma_j)$ has 
a free edge, which implies $d_\calS(\bS_i, \bSigma_j)\leq 2$.

\subsection*{Acknowledgements}
We would like to thank Mladen Bestvina for helpful conversations, particularly, 
for giving us the idea to examine how the core changes along splitting sequence. 


\section{Sphere systems and free splittings}\label{sec:sphere-splittings} 

Let $\bM$ be the connected sum of $n$ copies of $S^1 \times S^2$ and fix an identification 
of $\pi_1(\bM)$ with $\F_n$.  There is a well-known correspondence between spheres 
in $\bM$ and graph of group decompositions of $\F_{n}$ with trivial edge groups.  
We explain this correspondence now. 

\begin{definition}\label{def:sphere}
A \emph{sphere system} $\bS \subset \bM$ is a finite union of disjoint essential (does not bound a 3--ball) embedded 2--spheres in $\bM$.  We specifically allow for the possibility that a sphere system contains parallel, i.e., isotopic, spheres.  A sphere system is \emph{filling} if each of the complementary regions $\bM - \bS$ are simply-connected.    

We define a preorder on the set of sphere systems by $\bS \preceq \bSigma$ if every sphere in $\bS$ is isotopic to a sphere in $\bSigma$.  This induces an equivalence relation: $\bS \sim \bSigma$ if $\bS \preceq \bSigma$ and $\bSigma \preceq \bS$.  The set of equivalence classes of sphere systems in $\bM$ is denoted by $\calS$; the subset of equivalence classes of filling sphere systems is denoted by $\sphere$.  When there can be no confusion, we denote the equivalence class of $\bS$ again by $\bS$.    

The preorder induces a partial order on $\calS$ that we continue to denote by $\preceq$.  The \emph{sphere graph} is the simplicial graph with vertex set $\calS$ and edges corresponding to
domination $\bS \preceq \bSigma$.  For $\bS, \bSigma \in \calS$, we denote by $d_{\calS}(\bS,\bSigma)$ the distance between $\bS$ and $\bSigma$ in the sphere graph.  This the fewest number of edges in an edge path between the two vertices.
\end{definition}    

We denote the universal cover of $\bM$ by $M$ and the lift of the sphere system $\bS$ to $M$ by $S$; we will refer to $S$ as \emph{a sphere system in $M$}.  To simplify notation, we use $\calS$ and respectively $\sphere$ to denote (equivalence classes of) sphere systems, respectively filling sphere systems, in $M$.  Let $\displaystyle \MCG(\bM) = \raisebox{0.05cm}{$\Homeo(\bM)$}\big/\raisebox{-0.05cm}{\footnotesize{homotopy}}$.
The natural map:
\begin{equation*}\label{Eq:laudenbach}
  \MCG(\bM) \to \Out
\end{equation*}
is surjective and has finite kernel generated by Dehn twists about embedded 2--spheres in $\bM$~\cite{bk:Laudenbach74}.  Such homomorphisms act trivially on spheres systems and hence there is a left action of $\Out$ by automorphisms on the sphere graph.  Specifically, realize the given outer automorphism by a homeomorphism of $\bM$ and apply this homeomorphism to the members of a given equivalence class of sphere systems.

\begin{definition}\label{def:free splitting}
A \emph{free splitting} $G$ is a simplicial tree equipped with a cocompact action of $\F_{n}$ by automorphisms (without inversions) such that the stabilizer of every edge is trivial.  We specifically allow for the possibility that a free splitting contains vertices of valence two.  A free splitting is \emph{filling} if the stabilizer of every vertex is trivial. 

We define a preorder on the set of free splittings by $G \preceq \Gamma$ if there is an $\F_{n}$--equivariant cellular map $\Gamma \to G$ with connected point pre-images.  This induces an equivalence relation: $G\sim \Gamma$ if $G \preceq \Gamma$ and $\Gamma \preceq G$.  The set of equivalence classes of free splittings is denoted by $\calX$; the subset of equivalence classes of filling free splittings is denoted $\cv$\footnote{Experts may recognize $\cv$ as the vertices in the spine of the Culler--Vogtmann outer space~\cite{ar:CV86}.}.  When there can be no confusion, we denote the equivalence class of $G$ again by $G$.    

The preorder induces a partial order on $\calX$ that we continue to denote by $\preceq$.  The \emph{free splitting graph} is the simplicial graph with vertex set $\calX$ and edges corresponding to domination $G \preceq \Gamma$.  For $G, \Gamma \in \calX$, we denote by $d_{\calX}(G,\Gamma)$ the distance between $G$ and $\Gamma$ in the free splitting graph.  This the fewest number of edges in an edge path between the two vertices.
\end{definition}

Suppose that $G$ is a free splitting and let $\rho \from \F_{n} \to \Aut(G)$ denote the action homomorphism.  Given $\Phi \in \Aut(\F_{n})$, the homomorphism $\rho \circ \Phi \from \F_{n} \to \Aut(G)$ defines a new free splitting we denote by $G \cdot \Phi$.  This defines a right action by $\Aut(\F_{n})$ on the free splitting graph.  As $\Inn(\F_{n})$ acts trivially, this induces an action of $\Out$ by automorphisms on the free splitting graph.
  
There is a natural $\Out$--equivariant map from the sphere graph to the free splitting graph.  Given a sphere system $S \subset M$, we define a tree $G$ with vertex set consisting of the components of $M - S$ and edges corresponding to non-empty intersection between the closures of the components.  The action of $\F_{n}$ on $M$ induces a cocompact action of $\F_{n}$ on $G$ by automorphisms such that the stabilizer of every edge is trivial, i.e., $G$ is a free splitting.   This map is a simplicial isomorphism~\cite[Lemma~2]{ar:AS11}.  
 
\section{The Guirardel core}\label{sec:core}

In this section we give the definition of Guirardel core of two trees as it is presented in~\cite{ar:Guirardel05} specialized to the case of trees in $\cv$.  

\subsection{A core for a pair of tree actions}\label{subsec:defns} A \emph{ray} in $G \in \cv$ is an isometric embedding $\vec{r} \from \R_+ \to G$.  An \emph{end} of $G$ is an equivalence class of rays under the equivalence relation of having finite Hausdorff distance.  The set of all ends is called the \emph{boundary} of $G$ and is denoted by $\partial G$.  
 
A \emph{direction} is a connected component of $G - \{x\}$, where $x$ is a point in $G$.  A direction $\delta \subset G$ determines a subset $\partial \delta \subset \partial G$ consisting of all ends for which every representative ray intersects $\delta$ in a non-empty (equivalently unbounded) subset.  Given an edge $e \subset G$ we denote by $\vec{e}$ the edge with a specific orientation.  This determines a direction $\delta_{\vec{e}} \subset G$ by taking the component of $G - \{x\}$ that contains $e$, where $x$ is the initial vertex of $\vec{e}$.  We will denote by $ \vec{e}_\infty \subset \partial_\infty G$ the set of ends with a representative that crosses $\vec{e}$ with the specified orientation, i.e., $\vec{e}_\infty = \partial \delta_{\vec{e}}$.

A \emph{quadrant} in $G \times \Gamma$ is the product $\delta_1\times\delta_2$ of two directions $\delta_1\subset G$ and $\delta_2\subset \Gamma$. 

\begin{definition}\label{def:heavy}
Fix a basepoint $(\ast_{1},\ast_{2}) \in G \times \Gamma$ and consider a quadrant $Q=\delta_1\times\delta_2\subset G \times \Gamma$.  We say that $Q$ is \emph{heavy} if there exists a sequence $g_k \in \F_n$ so that 
\begin{enumerate}
\item $(g_k \ast_{1},g_{k}\ast_{2}) \in Q$
\item $d_{G}(g_k \ast_1, \ast_1) \overset{k\rightarrow \infty}{\longrightarrow} \infty$
  and $d_{\Gamma}(g_k \ast_2, \ast_2) \overset{k\rightarrow \infty}{\longrightarrow} \infty$
\end{enumerate}
Otherwise, we say that $Q$ is \emph{light}.
\end{definition}

The core of $G \times \Gamma$ is what remains when one has removed the light quadrants.

\begin{definition}[The Guirardel core]
Suppose that $G,\Gamma \in \cv$ and let $\calL(G,\Gamma)$ be the collection of light quadrants of $G \times \Gamma$.  The \emph{\textup{(}Guirardel\textup{)} core} of $G$ and $\Gamma$ is the subset
\begin{equation*}
\Core(G, \Gamma) = (G \times \Gamma) - 
  \left(\bigcup_{Q \in \calL(G,\Gamma)}  Q \right). 
\end{equation*}
\end{definition}

It follows from the definition that $\Core(G,\Gamma)$ is isomorphic to $\Core(\Gamma,G)$ via the swap $(x,y) \mapsto (y,x)$.  For more details and examples see~\cite{ar:Guirardel05,ar:BBC10}.

\subsection{Computing the core}\label{subsec:compute}

There is an algorithm to compute the core for trees $G,\Gamma \in \cv$.  This suffices to compute the core for any free splittings $G_{0},\Gamma_{0} \in \calX$.  Indeed, if the given trees are not filling, they can be ``blown-up'' to filling trees $G,\Gamma \in \cv$ by replacing vertices with non-trivial stabilizer in the quotient graph of groups $G_{0}/\F_{n}$ and $\Gamma_{0}/\F_{n}$ with roses of of the appropriate rank.  There are domination maps $p \from G \to G_{0}$ and $\pi \from \Gamma \to \Gamma_{0}$ and we have that $\Core(G_{0},\Gamma_{0}) = (p \times \pi)(\Core(G,\Gamma))$.  This material appears in~\cite[Section~2]{ar:BBC10} with slightly different terminology and notation.  We provide proofs of the most relevant parts necessary for the sequel.  

\begin{definition}\label{def:morphism}
Suppose that $G, \Gamma \in \cv$.  An $\F_{n}$--equivariant map $f \from G \to \Gamma$ is called a \emph{morphism} if:
\begin{enumerate} 
\item $f$ linearly expands every edge across a tight edge path; and
\item at each vertex of $G$ there are adjacent edges $e,e'$ such that $f(e) \cap f(e')$ is trivial, i.e., there is more than one gate at each vertex.
\end{enumerate}
\end{definition}
   
Such a map $f$ induces an $\F_{n}$--equivariant homeomorphism $f_\infty \from \partial G \to \partial \Gamma$.  Indeed, this follows by bounded cancellation, for instance see~\cite{ar:Cooper87}.  Next, we state the criterion provided in~\cite{ar:BBC10} regarding the existence of squares in the core.

\begin{lemma}[{\cite[Lemma 2.3]{ar:BBC10}}]\label{lem:squares}
Let $f \from G \to \Gamma$ be a morphism between $G,\Gamma \in \cv$.  Given two edges $e \subset G$ and $\eta \subset \Gamma$, the square $e \times \eta$ is in the core $\Core(G,\Gamma)$ if and only if for every choice of orientations of the edges $e$ and $\eta$ the subset $f_\infty(\vec{e}_\infty) \cap \vec{\eta}_\infty$ is non-empty.
\end{lemma}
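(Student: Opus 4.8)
The plan is to characterize which squares $e \times \eta$ lie in the core via the heaviness of the four quadrants they span, then translate the heaviness condition into a statement about ends. First I would set up the correspondence between a square and its quadrants: the two orientations $\vec{e}, \cev{e}$ of $e$ and $\vec{\eta}, \cev{\eta}$ of $\eta$ determine four directions, and their products give four quadrants whose common corner is the square $e \times \eta$. By the definition of the core, the square lies in $\Core(G,\Gamma)$ precisely when it is not contained in any single light quadrant; since the square touches all four quadrants at its corner, I expect the right formulation to be that $e \times \eta$ is in the core if and only if all four of these quadrants are heavy.

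Next I would connect heaviness of a quadrant $Q = \delta_{\vec{e}} \times \delta_{\vec{\eta}}$ to the boundary sets $\vec{e}_\infty$ and $\vec\eta_\infty$. The key observation is that the orbit points $(g_k \ast_1, g_k \ast_2)$ escaping to infinity inside $Q$ correspond, after passing to a subsequence, to a common limit end $\xi \in \partial G$ lying in $\vec e_\infty$ whose image $f_\infty(\xi)$ lies in $\vec\eta_\infty$. Conversely, given such a shared end, the density of the $\F_n$--orbit together with cocompactness of the action lets me produce the required escaping sequence inside $Q$. This is where I would invoke the fact that $f$ induces the equivariant homeomorphism $f_\infty \from \partial G \to \partial \Gamma$ from Definition~\ref{def:morphism}, so that aligning an end of $G$ with an end of $\Gamma$ is exactly the condition $f_\infty(\vec e_\infty) \cap \vec\eta_\infty \neq \emptyset$. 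Thus heaviness of $Q$ is equivalent to $f_\infty(\vec e_\infty) \cap \vec\eta_\infty \neq \emptyset$, and the ``for every choice of orientations'' clause in the statement encodes the requirement that all four quadrants be heavy.

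The main obstacle I anticipate is the careful handling of the limiting argument in both directions, and in particular making precise the claim that an escaping orbit sequence confined to a quadrant subconverges to a genuine boundary point realizing the desired incidence. I would want to use bounded cancellation (as cited via \cite{ar:Cooper87}) to control how $f$ distorts rays, ensuring that an end in $\vec e_\infty$ maps under $f_\infty$ to an end whose representative ray eventually crosses $\vec\eta$; the subtlety is that a morphism can fold, so one must rule out the image ray backtracking out of $\delta_{\vec\eta}$. The converse direction requires turning an abstract shared end into a concrete sequence $g_k$ satisfying both divergence conditions simultaneously, which I would obtain by choosing the $g_k$ to track along the axis or the ray toward $\xi$ and using cocompactness to keep $(g_k \ast_1, g_k \ast_2)$ inside the quadrant. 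Since this is precisely the content of \cite[Lemma~2.3]{ar:BBC10}, I would ultimately cite their argument, but the exposition above records the conceptual skeleton: squares in the core are detected by simultaneous alignment of ends under $f_\infty$.
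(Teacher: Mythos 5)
The paper offers no proof of this lemma to compare against: it is imported verbatim as \cite[Lemma~2.3]{ar:BBC10}, and the surrounding text only records the consequence $f_\infty$ exists via bounded cancellation. Your outline is a correct reconstruction of the standard argument behind that citation: reduce membership of the square $e \times \eta$ in the core to heaviness of the four corner quadrants $\delta_{\vec{e}} \times \delta_{\vec{\eta}}$, then identify heaviness of such a quadrant with $f_\infty(\vec{e}_\infty) \cap \vec{\eta}_\infty \neq \emptyset$ using the canonical identifications $\partial G \cong \partial \F_{n} \cong \partial \Gamma$ coming from the free cocompact actions, so that orbit points $g_k \ast_1$ converging to an end $\xi \in \partial\delta_{\vec{e}}$ force $g_k \ast_2 \to f_\infty(\xi)$, with compactness of $G \cup \partial G$ supplying the converse via a subsequence; your worry about folding is exactly what bounded cancellation disposes of.

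One imprecision is worth repairing. Being in the core means being \emph{disjoint from} every light quadrant, not merely ``not contained in any single light quadrant'': a quadrant can be based at an interior point of $e$, or at a point of $G$ away from $e$, and still clip part of the square, so a priori more than the four corner quadrants must be checked. The fix is the observation that any quadrant $\delta_1 \times \delta_2$ meeting the square contains one of the four corner quadrants up to a bounded set (if the base point of $\delta_1$ lies off $e$, then $\delta_1$ outright contains a corner direction; if it lies in the interior of $e$, then $\delta_1$ differs from a corner direction only by a subsegment of $e$), together with the fact that heaviness is monotone under inclusion and insensitive to bounded discrepancies, since the divergence conditions $d_{G}(g_k \ast_1, \ast_1) \to \infty$ and $d_{\Gamma}(g_k \ast_2, \ast_2) \to \infty$ eventually push the orbit points out of any bounded set. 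With that reduction in place, ``all four corner quadrants heavy'' is genuinely equivalent to the closed square surviving in $\Core(G,\Gamma)$, and the rest of your sketch goes through.
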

  
This condition is very natural in the following way.  Given a curve $\balpha$ on a closed surface $\bX$, each lift $\alpha$ of $\balpha$ to the universal cover $X$ determines a partition of $\partial X$ (which is homeomorphic to $S^{1}$) into two subsets $\alpha_{+}$ and $\alpha_{-}$.  For two curves $\balpha, \bbeta$ on $\bX$ that intersect minimally, lifts $\alpha, \beta$ to $X$ intersect if and only if for every choice of $*,*' \in \{+,-\}$ the set $\alpha_{*} \cap \beta_{*'}$ is non-empty.    
  
Using $f$, it is a simple matter to determine when this condition is met for a given pair of edges.  We discuss this now.  By the \emph{interior} of a simplicial subtree we mean all non-extremal edges. 

\begin{definition}\label{def:hull}
Suppose that $G,\Gamma \in \cv$, $f\from G \to \Gamma$ is a morphism and $\eta \in \Gamma$ is an edge.  We let $\calP^{f}_\eta$ be the set of edges in $G$ whose image under $f$ traverses $\eta$.  In other words, $\calP^{f}_\eta$ is the set of edges containing $f^{-1}(\eta)$.  Since $f$ is a morphism, by bounded cancellation, the set $\calP^{f}_\eta$ is finite.  

Let $\calH^{f}_\eta$ be the interior of the convex hull of $\calP^{f}_\eta$ and let $\widehat{\calP}^{f}_\eta = \calP^{f}_\eta - \calH^{f}_\eta$.  Notice that the interior of the convex hull of $\widehat{\calP}^{f}_{\eta}$ is also $\calH^{f}_{\eta}$.  Suppose $e \in \calH^{f}_\eta$ and $\vec e$ is an orientation of $e$.  We say $\vec e$ \emph{can escape} $\calP^{f}_\eta$ if there is an embedded ray of the form $\vec{e} \cdot \vec{r}$ such that $\vec{r}$ does not cross any edge of $\widehat{\calP}^{f}_\eta$. Define the \emph{consolidated convex hull} $\CH^{f}_\eta$ of $\calP^{f}_\eta$ to be the set of edges in $e \in \calH^{f}_\eta$ such that both orientations of $e$ can escape $\calP^{f}_\eta$. 
\end{definition}

\begin{figure}[ht]
\begin{tikzpicture}[scale=1.7]
\tikzstyle{vertex} =[circle,draw,fill=black,thick, inner sep=0pt,minimum size= 1 mm]
\draw[thick] (-2.75,0) -- (2.75,0);
\draw[thick] (-1,-1.75) -- (-1,1.75);
\draw[thick] (1,-1.75) -- (1,1.75);
\draw[thick] (-2,-0.75) -- (-2,0.75);
\draw[thick] (2,-0.75) -- (2,0.75);
\draw[thick] (0.25,1) -- (1.75,1);
\draw[thick] (-0.25,1) -- (-1.75,1);
\draw[thick] (0.25,-1) -- (1.75,-1);
\draw[thick] (-0.25,-1) -- (-1.75,-1);
\draw[thick] (2.5,0.25) -- (2.5,-0.25);
\draw[thick] (-2.5,0.25) -- (-2.5,-0.25);
\draw[thick] (1.75,0.5) -- (2.25,0.5);
\draw[thick] (-1.75,0.5) -- (-2.25,0.5);
\draw[thick] (1.75,-0.5) -- (2.25,-0.5);
\draw[thick] (-1.75,-0.5) -- (-2.25,-0.5);
\draw[thick] (0.75,1.5) -- (1.25,1.5);
\draw[thick] (-0.75,1.5) -- (-1.25,1.5);
\draw[thick] (0.75,-1.5) -- (1.25,-1.5);
\draw[thick] (-0.75,-1.5) -- (-1.25,-1.5);
\draw[thick] (1.5,0.75) -- (1.5,1.25);
\draw[thick] (-1.5,0.75) -- (-1.5,1.25);
\draw[thick] (1.5,-0.75) -- (1.5,-1.25);
\draw[thick] (-1.5,-0.75) -- (-1.5,-1.25);
\draw[thick] (0.5,0.75) -- (0.5,1.25);
\draw[thick] (-0.5,0.75) -- (-0.5,1.25);
\draw[thick] (0.5,-0.75) -- (0.5,-1.25);
\draw[thick] (-0.5,-0.75) -- (-0.5,-1.25);
\draw[ultra thick,LightBlue] (1,0) -- (2,0);
\draw[ultra thick,LightBlue] (1,1) -- (1,-1);
\draw[ultra thick,LightBlue] (0.5,1) -- (1,1);
\draw[ultra thick,LightBlue] (-1,1) -- (-1,1.5);
\draw[ultra thick,LightBlue] (-1.5,1) -- (-0.5,1);
\draw[ultra thick,LightBlue] (-1,0) -- (-1,-1);
\draw[ultra thick,LightBlue] (-1.5,-1) -- (-0.5,-1);
\draw[very thick,red] (-0.95,0.95) -- (-0.95,-0.95);
\draw[very thick,red] (-0.95,0.1) -- (1.05,0.1) -- (1.05,0.95);
\draw[very thick,green!50!black] (-0.9,-0.95) -- (-0.9,0.05) -- (1.1,0.05) -- (1.1,0.95);
\node [vertex] at (-1,0) {};
\node [vertex] at (1,0) {};
\node [vertex] at (2,0) {};
\node [vertex] at (2.5,0) {};
\node [vertex] at (-2,0) {};
\node [vertex] at (-2.5,0) {};
\node [vertex] at (-2,0.5) {};
\node [vertex] at (2,0.5) {};
\node [vertex] at (2,-0.5) {};
\node [vertex] at (-2,-0.5) {};
\node [vertex] at (-1.5,1) {};
\node [vertex] at (-1,1) {};
\node [vertex] at (-0.5,1) {};
\node [vertex] at (0.5,1) {};
\node [vertex] at (1,1) {};
\node [vertex] at (1.5,1) {};
\node [vertex] at (-1.5,-1) {};
\node [vertex] at (-1,-1) {};
\node [vertex] at (-0.5,-1) {};
\node [vertex] at (0.5,-1) {};
\node [vertex] at (1,-1) {};
\node [vertex] at (1.5,-1) {};
\node [vertex] at (-1,1.5) {};
\node [vertex] at (-1,-1.5) {};
\node [vertex] at (1,1.5) {};
\node [vertex] at (1,-1.5) {};
\end{tikzpicture}
\caption{A schematic of the sets $\calP^{f}_\eta$ (blue), $\widehat{\calP}^{f}_\eta$, $\calH^{f}_\eta$ (red) and $\CH^{f}_\eta$ (green).}
\end{figure}

\begin{lemma}\label{lem:vertices escape}
For every vertex $v \in G$ there is a ray $\vec r$ originating at $v$ that is disjoint from $\calP_\eta^{f}$. 
\end{lemma}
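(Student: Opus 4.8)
The plan is to avoid all convex-hull bookkeeping and instead build the ray directly from the morphism $f$, exploiting its gate structure so that the ray's image recedes from $\eta$ and therefore never traverses it. The guiding principle is the following: if $\vec r$ is a ray all of whose turns are \emph{legal} (their two edges lie in distinct gates), then by the gate condition in Definition~\ref{def:morphism} there is no cancellation along $f(\vec r)$, so $f(\vec r)$ is the reduced geodesic ray $[f(v),f_\infty(\xi))$; consequently $\vec r$ can cross an edge $e\in\calP^{f}_\eta$ only if $f(\vec r)$ traverses $\eta$. Thus it suffices to produce such a legal ray whose image never traverses $\eta$. Since $\Gamma$ is a tree, at every vertex $z$ there is a \emph{unique} direction pointing toward $\eta$, namely the initial direction $d_\eta(z)$ of the geodesic from $z$ to the midpoint $m$ of $\eta$; I will call the gate at a vertex $u\in G$ determined by $d_\eta(f(u))$ the \emph{$\eta$-gate}. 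The first observation is that every edge of $\calP^{f}_\eta$ incident to $u$ lies in the $\eta$-gate, because to traverse $\eta$ its image must leave $f(u)$ toward $\eta$.

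With this in hand I would construct $\vec r=e_1e_2\cdots$, with vertices $v=u_0,u_1,\dots$ and $z_i=f(u_i)$, by a greedy rule maintaining the invariant that the geodesic $[z_i,m]$ begins with the segment back to $z_{i-1}$, so that the image recedes monotonically from $\eta$. At $u_0$ the two-gate hypothesis lets me choose $e_1$ in any gate other than the $\eta$-gate, so $f(e_1)$ leaves $z_0$ away from $m$. At the $i$-th step the invariant says that $d_\eta(z_i)$ is exactly the direction of the incoming edge $\bar e_i$; hence the $\eta$-gate at $u_i$ is the gate of $\bar e_i$, and choosing $e_{i+1}$ in a different gate simultaneously (a) makes the turn legal, (b) steers $f(e_{i+1})$ away from $m$, and (c) forbids backtracking, since $e_{i+1}\neq e_i$. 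The two-gate hypothesis again guarantees such an $e_{i+1}$. A short check then shows $m\notin[z_0,z_i]$ for all $i$, so $f(\vec r)$ avoids $\eta$ and therefore $\vec r$ is disjoint from $\calP^{f}_\eta$.

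The main obstacle, and the reason a naive argument fails, is that one cannot simply \emph{escape the convex hull} of $\calP^{f}_\eta$: an interior vertex $v$ may be entirely surrounded by edges of $\calP^{f}_\eta$, so that the component of $v$ in $G\setminus\calP^{f}_\eta$ could a priori be bounded. The real content is to steer away from $\eta$ while simultaneously keeping every turn legal (so that ``image avoids $\eta$'' genuinely forces ``avoids $\calP^{f}_\eta$'') and never backtracking. The crucial simplification is that these three requirements coincide: because $\Gamma$ is a tree there is a single direction toward $\eta$, the incoming edge always occupies precisely the $\eta$-gate, and Definition~\ref{def:morphism}(2) always supplies an edge in another gate. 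I would also record two small points to make $d_\eta$ and the gate language uniform: a morphism sends vertices to vertices, so $f(u)$ is never the interior point $m$, and when $f(u)$ is an endpoint of $\eta$ the direction toward $\eta$ is simply the direction along $\eta$, so this case requires no separate treatment.
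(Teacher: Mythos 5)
Your proof is correct and is, at bottom, the paper's own argument: the paper proves the contrapositive in two lines --- if no such ray existed, then every edge adjacent to $v$ would map over the initial edge of the path from $f(v)$ to $\eta$, i.e., there would be a single gate, contradicting condition (2) of Definition~\ref{def:morphism} --- and your greedy construction (always exit through a gate other than the $\eta$-gate, so that the image recedes from $\eta$ in the tree $\Gamma$ and legality prevents backtracking) is exactly the iteration that this compressed statement implicitly relies on, spelled out. One small slip: in point (c) the condition ruling out backtracking should read $e_{i+1}\neq \bar{e}_i$ (the reverse of the incoming edge, which lies in the $\eta$-gate), not $e_{i+1}\neq e_i$.
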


\begin{proof}
If the lemma were false, then for every edge $e$ adjacent to $v$ the image $f(e)$ would contain the initial edge in the path connecting $f(v)$ to $\eta$.  This violates condition (2) in Definition~\ref{def:morphism}.
\end{proof}

The following simple condition tells exactly when a square is in the core.

\begin{lemma}\label{lem:hull}
Let $G,\Gamma \in \cv$, fix morphisms $f \from G \to \Gamma$ and $\phi \from \Gamma \to G$ and consider a pair of edges $e \subset G$ and $\eta \subset \Gamma$.  The square $e \times \eta$ is in $\Core(G,\Gamma)$ if and only if one of the two following equivalent conditions holds:
\begin{itemize}
\item $e \subseteq \CH_\eta^{f}$. 
\item  $\eta \subseteq \CH_e^{\phi}$. 
\end{itemize}
\end{lemma}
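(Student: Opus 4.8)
The plan is to establish the single equivalence that $e \times \eta \in \Core(G,\Gamma)$ if and only if $e \subseteq \CH^{f}_\eta$; the equivalence of the two bulleted conditions then comes for free. Indeed, applying this statement to the morphism $\phi \from \Gamma \to G$ with the roles of the two factors reversed gives $\eta \times e \in \Core(\Gamma,G)$ if and only if $\eta \subseteq \CH^{\phi}_e$, and the swap isomorphism $\Core(G,\Gamma)\cong\Core(\Gamma,G)$, $(x,y)\mapsto(y,x)$, identifies $e\times\eta$ with $\eta\times e$. Hence both bulleted conditions are equivalent to the same membership in the core. To analyze that membership I would invoke \lemref{lem:squares}: $e\times\eta\in\Core(G,\Gamma)$ precisely when, for each of the four choices of orientations, $f_\infty(\vec{e}_\infty)\cap\vec{\eta}_\infty\neq\emptyset$. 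The whole problem is thus to decide, for a given oriented edge $\vec{e}$ and a given oriented edge $\vec{\eta}$, whether some end in the direction $\delta_{\vec{e}}$ is carried by $f_\infty$ into the direction $\delta_{\vec{\eta}}$.

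Next I would set up a combinatorial dictionary for this last question in terms of $\calP^{f}_\eta$. Fix the orientation $\vec{\eta}$ and let $\Gamma_{+}$ and $\Gamma_{-}$ be the two components of $\Gamma$ with the interior of $\eta$ removed, with $\Gamma_{+}$ the side picked out by $\vec{\eta}$. Define a sign $\sigma(v)\in\{+,-\}$ for each vertex $v\in G$ according to which side of $\eta$ the vertex $f(v)$ lies on (assigning the two endpoints of $\eta$ to fixed closed sides). Since $f(e')$ is a tight, hence embedded, edge path in the tree $\Gamma$, it crosses $\eta$ at most once, and it crosses $\eta$ exactly when its endpoints carry opposite signs; therefore $\calP^{f}_\eta$ is exactly the set of \emph{bichromatic} edges, those whose two endpoints carry different signs. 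Bounded cancellation makes $\calP^{f}_\eta$ finite, so along any ray the sign $\sigma$ is eventually constant, and I claim $f_\infty$ sends the end of that ray into $\partial\Gamma_{+}$ (respectively $\partial\Gamma_{-}$) exactly when the eventual sign is $+$ (respectively $-$). Granting this, $f_\infty(\vec{e}_\infty)\cap\vec{\eta}_\infty\neq\emptyset$ says precisely that the half-tree $\delta_{\vec{e}}$ contains a ray that is eventually positive, i.e.\ that $\delta_{\vec{e}}$ meets an unbounded positive region. Running over the four orientations, $e\times\eta$ lies in the core if and only if each of the two half-trees $\delta_{\vec{e}}$ and $\delta_{-\vec{e}}$ meets an unbounded region of each sign.

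It then remains to match ``each half-tree reaches both signs'' with $e\subseteq\CH^{f}_\eta$, and here the two ingredients of the consolidated hull do exactly the expected jobs. First, $e\in\calH^{f}_\eta$ --- that $e$ is an interior edge of the convex hull of $\calP^{f}_\eta$ --- is equivalent to both $\delta_{\vec{e}}$ and $\delta_{-\vec{e}}$ containing a bichromatic edge, hence to both signs being \emph{present} (as vertices) on each side of $e$. Second, the escape condition upgrades ``present'' to ``present in an unbounded region.'' The point is that along a bichromatic edge the sign flips, while beyond an extremal edge of $\calP^{f}_\eta$ (an edge of $\widehat{\calP}^{f}_\eta$, i.e.\ a leaf edge of the hull) the sign is constant; moreover, by \lemref{lem:vertices escape}, every vertex $v$ admits a ray disjoint from $\calP^{f}_\eta$, so $v$ always lies in an unbounded region of its own sign $\sigma(v)$. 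Using these facts I would show that $\vec{e}$ can escape $\calP^{f}_\eta$ --- reach infinity along $\vec{e}\cdot\vec{r}$ without crossing $\widehat{\calP}^{f}_\eta$ --- is exactly the ability to realize the ``second'' sign on the far side of $e$ without being forced out through a single leaf, and that requiring this for both orientations is equivalent to both half-trees reaching both signs. Combined with the interior-hull characterization, this yields $e\subseteq\CH^{f}_\eta$.

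I expect the main obstacle to be precisely this last matching step: translating the escape and extremal-edge bookkeeping into the ``unbounded region of each sign'' statement. The subtlety is that unbounded monochromatic regions arise not only beyond the leaf edges $\widehat{\calP}^{f}_\eta$ but also as complementary components hanging off interior vertices of the hull, and one must know that these exist and carry the expected sign; this is exactly what \lemref{lem:vertices escape}, equivalently the two-gate condition in the definition of a morphism, supplies. A secondary technical point, handled by bounded cancellation together with the fact that $\eta$ is a single separating edge, is that tightening the image $f(p)$ of an edge path neither creates nor destroys crossings of $\eta$ beyond the net algebraic count, so that the signs and the ``eventually constant'' claim are well defined.
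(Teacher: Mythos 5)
Your setup is sound and, in fact, tracks the paper's own proof closely: the same symmetry reduction to one bullet, the same appeal to \lemref{lem:squares}, and your sign dictionary (the sign of an end flips once per crossing of an edge of $\calP^{f}_\eta$) is precisely the mechanism behind the paper's assertion that two rays from a common vertex whose crossing counts differ by one have $f_\infty$--images in $\vec{\eta}_\infty$ for opposite orientations of $\eta$. But there is a genuine gap: the equivalence you defer --- ``escape for both orientations $\Leftrightarrow$ each half-tree contains ends of both signs'' --- is the entire content of the lemma, and you leave it at ``I would show\dots''. Concretely, in the forward direction you must actually \emph{produce} a ray through $\vec{e}$ whose eventual sign is opposite to that of an escaping ray. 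The paper does this by ray surgery: follow the escaping ray $\vec{r}_0$ to the \emph{last} edge $e_0$ of $\CH_\eta$ on it, use $e_0 \subseteq \calH_\eta$ to find a ray beyond $\vec{e}_0$ meeting $\calP_\eta$, cross the \emph{first} such edge $e_1$, and then escape from its far endpoint via \lemref{lem:vertices escape}. Choosing $e_1$ first and escaping from its far vertex is what guarantees the concatenation is an embedded ray and that the crossing count increases by exactly one, so the sign genuinely flips rather than changing by an even amount. A naive version of your plan (cross any bichromatic edge in $\delta_{\vec{e}}$, then escape) fails this bookkeeping: the ray supplied by \lemref{lem:vertices escape} avoids $\calP_\eta$ but may backtrack through edges not in $\calP_\eta$, including $e$ itself, since $e$ need not lie in $\calP_\eta$.

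The converse has a matching gap. When $e \subseteq \calH_\eta - \CH_\eta$ and $\vec{e}$ cannot escape, you need that \emph{every} ray through $\vec{e}$ crosses $\calP_\eta$ with the \emph{same parity}, so that all ends on that side carry a single sign. The paper's (terse) argument: the first $\calP_\eta$--edge such a ray crosses must lie in $\widehat{\calP}_\eta$ --- if it lay in $\calH_\eta$, then \lemref{lem:vertices escape} applied at its far endpoint would manufacture a ray avoiding $\widehat{\calP}_\eta$, contradicting non-escape --- and after crossing a leaf edge of the hull subtree the ray has left the hull for good, so the count is exactly one. Your observation that unbounded monochromatic regions hang off interior vertices is true but does not by itself exclude two rays in $\delta_{\vec{e}}$ with crossing numbers of opposite parity. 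Finally, a small fixable slip: the claim ``$e \in \calH^{f}_\eta$ iff both $\delta_{\vec{e}}$ and $\delta_{-\vec{e}}$ contain a bichromatic edge'' must be read as edges \emph{entirely} contained in the open half-trees (equivalently, both components of $G$ minus the open edge $e$ contain an edge of $\calP^{f}_\eta$ other than $e$); under the weaker reading ``meets a bichromatic edge,'' every $e \in \widehat{\calP}^{f}_\eta$ satisfies the criterion via $e$ itself while lying outside $\calH^{f}_\eta$.
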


\begin{proof}
We prove the first of the two equivalent statements; the fact that they are equivalent follows from the symmetry of the construction of the core.  For simplicity, we omit the superscript $f$ on the various subsets from Definition~\ref{def:hull} during the proof of this lemma.

By Lemma~\ref{lem:squares}, what needs to be shown is that $e \subseteq \CH_\eta$ if and only if for each orientation $\vec{e}$ of $e$ and orientation $\vec{\eta}$ of $\eta$ there is a ray $\vec{r}$ crossing $\vec{e}$ with the specified orientation such that $f_\infty(\vec{r}\,) \in \vec{\eta}_\infty$.  

First suppose that $e \subseteq \CH_\eta$ and fix an orientation $\vec{e}$ on $e$. 
As $e \subseteq \CH_\eta$, there is a ray $\vec{r}_0 = \vec{e} \cdot \vec{u}$ such that $\vec{u}$ is disjoint from $\widehat{\calP}_\eta$.  Let $e_0$ be the last edge on $\vec{r}_0$ that is in $\CH_\eta$ and decompose $\vec{r}_0 = \vec{u}_0 \cdot \vec{e}_0 \cdot \vec{u}_1$ where $\vec{u}_0$ may be trivial.  It is easy to verify that the ray $\vec{u}_1$ is disjoint from $\calP_\eta$.  As $e_0 \subseteq \calH_\eta$, there is a ray of the form $\vec{e}_0 \cdot \vec{u}_2$ where $\vec{u}_2$ is not disjoint from $\calP_\eta$.  (It may be that $\vec{u}_{1}$ and $\vec{u}_{2}$ have non-trivial intersection.)  Let $e_1$ be the first edge on $\vec{u}_2$ that is contained in $\calP_\eta$ and $\vec{p}$ the oriented edge path from $\vec{e}$ to $\vec{e}_1$.  By Lemma~\ref{lem:vertices escape}, there is a ray $\vec{v}_1$ originating at the terminal vertex of $\vec{p}$ that is disjoint from $\calP_\eta$.  Let $\vec{r}_1 = \vec{p} \cdot \vec{v}_1$.  We now see that:
\begin{equation*}
\# | \vec{r}_1 \cap \calP_\eta | = \# | \vec{r}_0 \cap \calP_\eta| + 1
\end{equation*}
Since $\vec{r}_0$ and $\vec{r}_1$ originate from the same vertex, their $f_\infty$--images lie in $\vec{\eta}_\infty$ for opposite choices of orientation of $\eta$.   By Lemma~\ref{lem:squares}, this shows that $e  \times \eta \subseteq \Core(G,\Gamma)$.

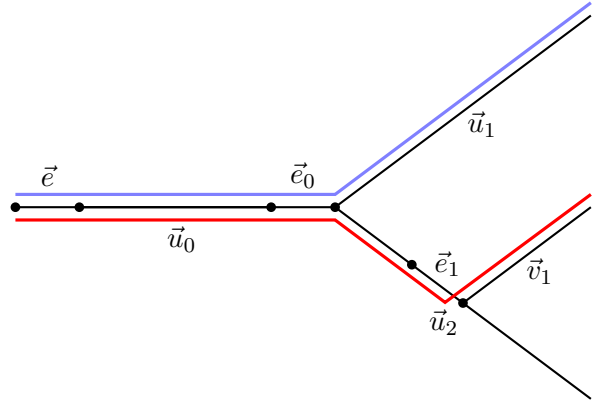
\begin{figure}[ht]
\begin{tikzpicture}[scale=1.7]
\tikzstyle{vertex} =[circle,draw,fill=black,thick, inner sep=0pt,minimum size= 1 mm]
\draw[thick] (-2,0) -- (0.5,0);
\draw[thick] (0.5,0) -- (2.5,-1.5);
\draw[thick] (1.5,-0.75) -- (2.5,0);
\draw[thick] (-1.5,0) -- (0,0);
\draw[thick] (0.5,0) -- (2.5,1.5);
\draw[very thick,LightBlue] (-2,0.1) -- (0.5,0.1) -- (2.5,1.6);
\draw[very thick,red] (-2,-0.1) -- (0.5,-0.1) -- (1.36,-0.745) -- (2.5,0.1);
\node[vertex] at (-2,0) {};
\node[vertex] at (-1.5,0) {};
\node[vertex] at (0,0) {};
\node[vertex] at (0.5,0) {};
\node[vertex] at (1.5,-0.75) {};
\node[vertex] at (1.1,-0.45) {};
\node at (-1.75,0.25) {$\vec{e}$};
\node at (0.25,0.25) {$\vec{e}_{0}$};
\node at (1.38,-0.44) {$\vec{e}_{1}$};
\node at (-0.7,-0.25) {$\vec{u}_{0}$};
\node at (1.65,0.65) {$\vec{u}_{1}$};
\node at (1.35,-0.9) {$\vec{u}_{2}$};
\node at (2.1,-0.5) {$\vec{v}_{1}$};
\end{tikzpicture}\caption{Rays $\vec{r}_{0}$ (blue) and $\vec{r}_{1}$ (red) witnessing $e \subseteq \CH_\eta$ in Lemma~\ref{lem:hull}.}
\end{figure}

For the converse we suppose that $e \nsubseteq \CH_\eta$.  If further, $e \nsubseteq \calH_\eta$, then there is a choice of orientation $\vec{e}$ such that for every ray of the form $\vec{e} \cdot \vec{r}$, the ray $\vec{r}$ misses $\calP_\eta$.  Therefore, there is an orientation on $\eta$, say $\vec{\eta}$, such that $f_\infty(\vec{e}_\infty) \cap \vec{\eta}_\infty = \emptyset$.  By Lemma~\ref{lem:squares}, $e \times \eta \nsubseteq \Core(G,\Gamma)$.

Thus we can assume that $e \subseteq \calH_\eta - \CH_\eta$.  Hence, there is a choice of orientation $\vec{e}$ that cannot escape, i.e., for every ray form $\vec{e} \cdot \vec{r}$, the ray $\vec{r}$ must contain some edge in $\widehat{\calP}_\eta$. By Lemma~\ref{lem:vertices escape}, we see that each such ray $\vec{r}$ can only contain a single edge of $\calP_\eta$.  Again, there is an orientation on $\eta$, say $\vec{\eta}$, such that $f_\infty(\vec{e}_\infty) \cap \vec{\eta}_\infty = \emptyset$.  By Lemma~\ref{lem:squares}, $e \times \eta \nsubseteq \Core(G,\Gamma)$.  
\end{proof}

Since the $\Core(G,\Gamma)$ is defined without reference to the morphism $f \from G \to \Gamma$, Lemma~\ref{lem:hull} shows that $\CH_{\eta}^{f}$ and $\CH_{e}^{\phi}$ do not depend on the actual morphism used to compute them.  As such, we will drop the superscripts from these sets for the remainder.


\section{Sphere systems and the core}\label{sec:sphere-core}

In Section~\ref{sec:sphere-splittings} we described an $\Out$--equivariant association between sphere systems and free splittings respecting the notion of filling: $(\calS, \sphere) \leftrightarrow (\calX,\cv)$.  In Section~\ref{sec:core}, given a pair of free splittings $G,\Gamma \in \cv$, we described how to construct their Guirardel core $\Core(G, \Gamma)$.  The goal of this section is given a pair of filling sphere systems $S, \Sigma \in \sphere$ to construct a $2$--dimensional square complex $\Core(S, \Sigma)$.   We then show that when the pair of sphere systems $(S,\Sigma)$ is associated to the pair of free splittings $(G,\Gamma)$ there is a $\F_{n}$--equivariant isomorphism from $\Core(S, \Sigma) \to \Core(G, \Gamma)$ of square complexes.  Moreover, this association is $\Out$--equivariant with respect to the actions on $\cv$ and $\sphere$.  This association is implicit in the proof of Proposition~2.1 in~\cite{ar:Horbez12}.  We explain the connection in more details here and provide an alternative proof.  The in depth description is necessary for understanding the effect of surgery on the core that we describe in Section~\ref{sec:surgery and core}.

\subsection{Hatcher's normal form}\label{subsec:normal form}

Central to the understanding of sphere systems in $M$ is Hatcher's notion of normal form.  He originally defined normal form only with respect to a maximal sphere systems $\Sigma$~\cite{ar:Hatcher95} and extended this to filling sphere systems in subsequent work with Vogtmann~\cite{ar:HV96}.  We recall this definition now.  The sphere system $S$ is said to be in \emph{normal form} with respect to $\Sigma$ if every sphere $s \in S$ either belongs to $\Sigma$, or intersects $\Sigma$ transversely in a collection of circles that split $s$ into components called \emph{pieces} such that for each component $\Pi \subset M - \Sigma$ one has:
\begin{enumerate}
\item each piece in $\Pi$ meets each boundary sphere in $\partial \Pi$ in at most one circle, and
\item no piece in $\Pi$ is a disk that is isotopic relative to its boundary to a disk in $\partial \Pi$.
\end{enumerate}

Hatcher proved that a sphere system $S$ can always be homotoped into
normal form with respect to the maximal sphere system $\Sigma$ and that such a form is unique up to homotopy~\cite{ar:Hatcher95,ar:HV96}.  Hensel--Osajda--Przytycki generalized Hatcher's definition of normal form to non-filling sphere systems and in a way that is obviously symmetric with respect to the two sphere systems~\cite{ar:HOP14}.   With their notion, two sphere systems $S$ and $\Sigma$ are in \emph{normal form} if for all $s \in S$ and $\sigma \in \Sigma$ one has:  
\begin{enumerate}
\item $s$ and $\sigma$ intersect transversely in at most one circle, and
\item none of the disks in $s - \sigma$ is isotopic relative to its boundary to a disk in $\sigma$.
\end{enumerate}
These notions are equivalent when $\Sigma$ is filling~\cite[Section~7.1]{ar:HOP14}.  

\subsection{A core for a pair of sphere systems}\label{subsec:sphere-core}
Suppose that $S$ and $\Sigma$ are filling sphere systems in $M$ and that they are in normal form.  An \emph{$S$--piece} is the closure of a component of $S - \Sigma$.  Likewise, a \emph{$\Sigma$--piece} is the closure of a component of $\Sigma - S$.  By \emph{piece}, we mean either an $S$--piece or a $\Sigma$--piece (this agrees with the use of piece in Section~\ref{subsec:normal form}).

\begin{lemma}\label{lem:intersection of components}
Suppose that $X$ is the intersection of a component of $M - S$ and a component of $M - \Sigma$.  Then the following statements are true.
\begin{enumerate}

\item $X$ is connected.

\item $\partial X$ is the union of $S$--pieces and $\Sigma$--pieces and moreover, different pieces are subsets of different spheres.

\item If $Y$ is also the intersection of a component of $M - S$ and a component of $M - \Sigma$, then either $X = Y$, their closures are disjoint, or $\partial X \cap \partial Y$ is a piece. 

\end{enumerate}
\end{lemma}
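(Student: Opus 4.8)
The plan is to exploit that, since $S$ and $\Sigma$ are filling, each closed complementary region $\overline{U}$ of $M-S$ and $\overline{V}$ of $M-\Sigma$ is a compact, simply connected $3$--manifold whose boundary is a disjoint union of $2$--spheres; capping the boundary spheres with balls and invoking the Poincar\'e conjecture, each such region is a punctured $3$--sphere, i.e.\ $S^{3}$ with finitely many open balls removed. Throughout I would record the two consequences of normal form that drive everything: any $s\in S$ and $\sigma\in\Sigma$ meet transversely in at most one circle, and no disk of $s-\sigma$ is boundary--parallel into $\sigma$. Writing $X=U\cap V$, note $\partial X\subseteq S\cup\Sigma$, and that the portion of $\partial X$ lying on a single sphere $s$ is a union of components of $s-\Sigma$, i.e.\ of $S$--pieces (and symmetrically for $\Sigma$).

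For part (1) I would argue that distinct components of $U-\Sigma$ lie in distinct components of $M-\Sigma$; since each component of $U-\Sigma$ is contained in a single $V$, this is exactly the assertion that $X=U\cap V$ is connected. Given $p,q\in X$, choose a path $a$ in $U$ and a path $b$ in $V$ from $p$ to $q$ (possible since $U$ and $V$ are connected). Because $M$ is simply connected, the loop $a\cdot\bar{b}$ bounds a map of a disk $D\to M$, which I make transverse to $S\cup\Sigma$, so that $D\cap S$ and $D\cap\Sigma$ are arcs and circles. I then remove these by the standard innermost--circle and innermost--arc surgeries. Here normal form is essential: an innermost bigon or a boundary--parallel disk would contradict the no--trivial--disk condition, so each simplification is realized without creating new intersections and the process terminates. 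Pushing $a$ across the simplified disk yields a path from $p$ to $q$ that never leaves $U$ and never crosses $\Sigma$, hence lies in $X$. \textbf{This innermost--disk argument is the main obstacle}: making precise that normal form forbids exactly the configurations that would otherwise obstruct homotoping $a$ into $X$ is where the real work lies.

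Given connectivity, part (2) becomes bookkeeping. The portion of $\partial X$ on a fixed sphere $s$ is a union of $S$--pieces; if it contained two distinct $S$--pieces of $s$, then $X$ would meet $s$ along two disjoint subsurfaces, and connectivity of $X$ together with the fact that $s$ meets each $\sigma$ in at most one circle forces these to coincide. Running the same argument on the $\Sigma$--side shows $X$ meets each sphere in at most one piece, which is precisely the statement that different pieces of $\partial X$ lie on different spheres.

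For part (3) I would split into cases according to whether $U=U'$ and whether $V=V'$, using that two distinct components of $M-S$ have closures meeting in at most one sphere of $S$ (and symmetrically for $\Sigma$). If $U=U'$ and $V=V'$ then $X=Y$ by part (1). If exactly one equality holds, say $U=U'$ but $V\neq V'$, then $\overline{X}\cap\overline{Y}\subseteq\overline{V}\cap\overline{V'}$, which is either empty (closures disjoint) or a single sphere $\sigma$; in the latter case the intersection is a single component of $\sigma\cap\overline{U}$, that is, a $\Sigma$--piece, giving the third alternative once part (1) is used to see it is connected. The genuinely delicate case is $U\neq U'$ and $V\neq V'$, where $\overline{X}\cap\overline{Y}$ is confined to the single circle $s\cap\sigma$ cut out by the unique normal--form intersection of the relevant spheres. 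Here I would appeal to the local model of two spheres crossing transversely in one circle, together with the no--trivial--disk condition, to verify that $X$ and $Y$ share no $2$--dimensional piece and meet in at most this circle, so that the stated trichotomy holds; checking this corner configuration is the second place where normal form is indispensable, and it is the subtle point I expect to demand the most care after the connectivity argument of part (1).
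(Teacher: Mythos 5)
The paper itself offers no proof of this lemma beyond ``this follows from the description of normal form, details are left to the reader,'' so the only meaningful comparison is whether your outline would actually close the gap, and there are two places where, as written, it would not. First, in part (1) your assertion that ``each simplification is realized without creating new intersections'' is false as stated: when you cap an innermost circle of $D \cap S$ with a disk $d \subset s$ and push off $s$, the disk $d$ will in general cross the circles of $s \cap \Sigma$, so the move trades $S$--intersections for new $\Sigma$--intersections (and symmetrically). Normal form does not forbid this configuration; it only controls bigons \emph{between} the two sphere systems. A naive alternating innermost argument therefore has no evident termination, and you would need a lexicographic complexity and a careful case analysis of how arcs of $D \cap S$ and $D \cap \Sigma$ cross inside $D$. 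A cleaner route avoids singular disks entirely: the circles $\sigma \cap S$ cut $\sigma$ into a pattern whose dual graph $T_\sigma$ is a tree (disjoint circles on a sphere), and the natural map $T_\sigma \to G$ to the tree dual to $S$ is locally injective at each vertex precisely because $\sigma$ meets each sphere of $S$ in at most one circle; a locally injective map of trees is injective, so $\sigma \cap \overline{U}$ is a single piece for each complementary region $U$. Running the same immersion argument for the decomposition of $U$ by the (separating, since $H^1(\overline{U})=0$) $\Sigma$--pieces, mapped into $\Gamma$, gives that distinct components of $U - \Sigma$ lie in distinct components of $M - \Sigma$, which is (1), and it delivers the ``different pieces lie on different spheres'' clause of (2) in the same stroke, rather than as after-the-fact bookkeeping.

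Second, your ``genuinely delicate case'' of (3), with $U \neq U'$ and $V \neq V'$, cannot be resolved the way you propose, because the stated trichotomy genuinely fails there: if $s \cap \sigma = c$ is an intersection circle, then near any point of $c$ all four quadrant regions are present, so the two \emph{diagonal} regions $X = P_1 \cap \Pi_1$ and $Y = P_2 \cap \Pi_2$ satisfy $X \neq Y$, $\overline{X} \cap \overline{Y} \supseteq c$, and (after checking the two--dimensional parts are on opposite sides of $\sigma$ in $s$ and of $s$ in $\sigma$) $\partial X \cap \partial Y = c$, a circle, which is not a piece. So ``verifying the stated trichotomy'' in this case is verifying something false; the honest conclusion is that the lemma needs a fourth alternative --- $\partial X \cap \partial Y$ is an intersection circle, occurring exactly for diagonal pairs --- and that the core construction in Definition~\ref{def:sphere core} remains consistent because this configuration is recorded by the square $s \times \sigma$, not by an edge (equivalently, ``non-trivial intersection'' there must be read as sharing a piece). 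Your remaining steps --- the reduction of (1) to connecting $p,q$ in $U - \Sigma$, the use of the tree $G$ to see that two components of $M - S$ share at most one boundary sphere, and the off-diagonal cases of (3) --- are sound, and the punctured--$3$--sphere structure of the regions is classical here (Laudenbach/Hatcher), so the appeal to the Poincar\'e conjecture is unnecessary.
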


\begin{proof}
This follows from the description of normal form, details are left to the reader.
\end{proof}

The first item in Lemma~\ref{lem:intersection of components} implies that the intersection of a component of $M - S$ and a component of $M - \Sigma$ is either empty or a component of $M - (S \cup \Sigma)$.

\begin{definition}\label{def:sphere core}
Suppose that $S$ and $\Sigma$ are filling sphere systems in $M$ and that they are normal form. The \emph{core} of $S$ and $\Sigma$, denoted $\Core(S,\Sigma)$, is the square complex defined as follows. 

\begin{itemize}

\item Vertices correspond to components of $M - (S \cup \Sigma)$.  Such a region corresponds to the intersection of a component $P \subset M - S$ and a component $\Pi \subset M - \Sigma$.  We denote the vertex by $(P,\Pi)$.

\item There is an edge between two vertices when the closures of the corresponding components of $M - (S \cup \Sigma)$ have non-trivial intersection.  By Lemma~\ref{lem:intersection of components}, each edge corresponds to a piece.  If it is an $S$--piece, then it is the closure of $s \cap \Pi$ for some sphere $s \in S$ and component $\Pi \subset M - \Sigma$.  We denote the edge by $(s, \Pi)$.  Likewise, if it is an $\Sigma$--piece, then it is the closure of $P \cap \sigma$ for some component $P \subset M - S$ and sphere $\sigma \in \Sigma$ .  In this case, we denote the edge by $(P,\sigma)$. 

\item Suppose that $s \in S$ and $\sigma \in \Sigma$ have non-empty intersection.  Let $P_1$, $P_2 \subset M - S$ be the components whose boundary contains $s$ and let $\Pi_1$, $\Pi_2 \subset M - \Sigma$ be the components whose boundary contains $\sigma$.  Then four edges $(s,\Pi_1)$, $(P_1,\sigma)$, $(s,\Pi_2)$ and $(P_2,\sigma)$ form the boundary of a square with vertices $(P_{1},\Pi_{1})$, $(P_{2},\Pi_{1})$, $(P_{2},\Pi_{2})$ and $(P_{1},\Pi_{2})$ which is then filled in.  The square is denoted by $s \times \sigma$.

\end{itemize}
\end{definition}

\begin{remark}\label{rem:diagonal edge}
We always assume that $S$ and $\Sigma$ do not share a sphere.  Otherwise Theorem~\ref{Thm:Main} is trivial.  However the core in this case would be disconnected and make the exposition more complicated.  There is a procedure to add diagonal edges resulting in the \emph{augmented core}, which is connected.  See~\cite{ar:Guirardel05} for details.  
\end{remark}

\begin{figure}[ht]
\begin{tikzpicture}[scale=0.6]
\tikzstyle{vertex} =[circle,draw,fill=black,thick, inner sep=0pt,minimum size= 1 mm]
\filldraw[yellow,opacity=.7] (0,0,0) -- (-2.5,0,0) -- (-2.5,-2.5,0) -- (0,-2.5,0) -- cycle;
\draw[thick] (-2.5,0,0) -- (-2.5,-2.5,0) -- (0,-2.5,0);
\filldraw[green!30,opacity=.5] (0,0,5) -- (0,-5,5) -- (0,-5,-5) -- (0,0,-5) -- cycle;
\filldraw[yellow,opacity=.7] (0,0,0) -- (2.5,0,0) -- (2.5,-2.5,0) -- (0,-2.5,0) -- cycle;
\draw[thick] (2.5,0,0) -- (2.5,-2.5,0) -- (0,-2.5,0);
\filldraw[red!30,opacity=.5] (-5,0,5) -- (5,0,5) -- (5,0,-5) -- (-5,0,-5) -- cycle;
\filldraw[yellow,opacity=.7] (0,0,0) -- (-2.5,0,0) -- (-2.5,2.5,0) -- (0,2.5,0) -- cycle;
\draw[thick] (-2.5,0,0) -- (-2.5,2.5,0) -- (0,2.5,0);
\filldraw[green!30,opacity=.5] (0,0,5) -- (0,5,5) -- (0,5,-5) -- (0,0,-5) -- cycle;
\draw[thick] (0,0,5) -- (0,0,-5);
\filldraw[yellow,opacity=.7] (0,0,0) -- (2.5,0,0) -- (2.5,2.5,0) -- (0,2.5,0) -- cycle;
\draw[thick] (2.5,0,0) -- (2.5,2.5,0) -- (0,2.5,0);
\node[vertex] at (2.5,2.5,0) {};
\node[vertex] at (-2.5,2.5,0) {};
\node[vertex] at (-2.5,-2.5,0) {};
\node[vertex] at (2.5,-2.5,0) {};
\node at (2.9,2.95,0) {$(P_1,\Pi_1)$};
\node at (-2.9,2.95,0) {$(P_2,\Pi_1)$};
\node at (2.9,-2.95,0) {$(P_1,\Pi_2)$};
\node at (-2.9,-2.95,0) {$(P_2,\Pi_2)$};
\node at (0,2.95,0) {$(s,\Pi_1)$};
\node at (0,-2.95,0) {$(s,\Pi_2)$};
\node at (3.5,0,0) {$(P_1,\sigma)$};
\node at (-3.5,0,0) {$(P_2,\sigma)$};
\node at (5,1,0) {$\sigma$};
\node at (1,5,0) {$s$};
\end{tikzpicture}
\caption{Edges $(s,\Pi_1)$, $(P_1,\sigma)$, $(s,\Pi_2)$ and $(P_2,\sigma)$ form the boundary of a square $s \times \sigma$.}
\end{figure}
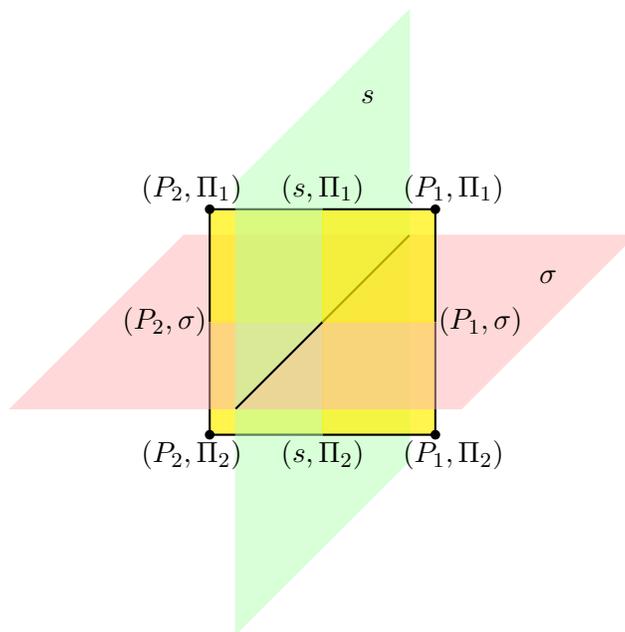

Let $G$, $\Gamma \in \cv$ be the free splittings corresponding to $S$ and $\Sigma$ respectively.  We will show that the two notions of the core, $\Core(S,\Sigma)$ and $\Core(G,\Gamma)$, are isomorphic as $\F_{n}$--square complexes.  We will do so by showing that their horizontal hyperplanes agree.  To this end we make the following definition.

\begin{definition}\label{def:shadow}
The \emph{shadow of $\sigma \in \Sigma$} is the union of the edges $e \subset G$ whose associated sphere in $S$ intersects $\sigma$.  We denote the shadow by $\Shadow(\sigma) \subset G$. 
\end{definition}

Observe that the shadow of $\sigma$ is isomorphic to the tree in $\sigma$ that is dual to the intersection circles between $\sigma$ and $S$.  Now will show how to relate the two definitions of the core.  We will make use of the following notion.  

\begin{definition}\label{def:dual graph}
If $G \in \calX$ corresponds to a sphere system $S \in \calS$, and $\iota \from G \hookrightarrow M$ is an $\F_{n}$--equivariant embedding, we say $\iota(G)$ is \emph{dual to $S$} if each sphere $s \in S$ intersects exactly one edge of $\iota(G)$, namely the image of the corresponding edge, and this intersection is transverse and a single point.  We say that $\iota$ is a \emph{dual embedding \textup{(}for $S$\textup{)}}.
\end{definition}

It is a routine matter to construct a dual embedding for a given free splitting.  We need to show that we can make it in some sense normal to $\Sigma$.

\begin{lemma}\label{lem:normal embedding}
There exists a dual embedding $\iota \from G \hookrightarrow M$ for $S$ so that for each edge $e \in G$ and sphere $\sigma \in \Sigma$, $\iota(e)$ and $\sigma$ are either disjoint or intersect transversely at a single point in the interior of $\iota(e)$.   
\end{lemma}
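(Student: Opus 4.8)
The plan is to produce the embedding by first taking any dual embedding and then putting its image into minimal position with respect to $\Sigma$, exploiting the fact that a filling sphere system has simply connected complementary regions. First I would construct a dual embedding $\iota_0 \from G \hookrightarrow M$ as in Definition~\ref{def:dual graph}; this is routine, and I would arrange $\iota_0$ to be $\F_n$--equivariant and transverse to $\Sigma$, so that each edge meets $\Sigma$ in finitely many interior points while each sphere $s \in S$ still meets only its dual edge, in a single transverse point. The target condition constrains only how edges meet $\Sigma$, and since every edge of $G$ crosses exactly one sphere of $S$, I can improve $\iota_0$ one complementary region at a time: the part of $\iota(G)$ lying in a component $P$ of $M - S$ is a ``star'' consisting of the vertex $v_P$ together with a half--edge running to each boundary sphere of $P$. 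Carrying out all modifications inside the regions $P$ keeps them away from $S$, so the dual property for $S$ is preserved throughout.

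The geometric input is the following. Because $M$ is the universal cover of $\bM$ it is simply connected, so $H_1(M) = 0$ and hence every embedded sphere $\sigma \in \Sigma$ separates $M$ into two sides. Thus for an edge $e$ with endpoints $v_{P_1}, v_{P_2}$ lying off $\Sigma$, the parity of $\# | \iota(e) \cap \sigma |$ records whether $v_{P_1}$ and $v_{P_2}$ lie on the same or opposite sides of $\sigma$, and in particular the minimal possible number of transverse intersection points of $\iota(e)$ with $\sigma$ is $0$ or $1$. I would then match this minimum to the desired conclusion using normal form together with Lemma~\ref{lem:intersection of components}: since $s \cap \sigma$ is a single circle or empty, crossing $s$ transversely at a point of one piece of $s$ keeps one in a single component of $M - \Sigma$, so the inequality $\# | \iota(e) \cap \sigma | \le 1$ is exactly the statement that $\iota(e)$ realizes the minimal intersection number with each $\sigma$.

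The crux is the reduction of excess intersections. Among all dual embeddings for $S$ I would choose one minimizing the number of points of $\iota(G) \cap \Sigma$ in a fundamental domain, which is a well--defined nonnegative integer. If some edge $\iota(e)$ still met a sphere $\sigma$ more than minimally, a standard innermost--disk argument, using that the components of $M - \Sigma$ are simply connected because $\Sigma$ is filling, should produce an embedded disk $D$ whose boundary is the union of a sub--arc $\beta \subset \iota(e)$ and an arc $\gamma \subset \sigma$, with $\mathrm{int}\,D$ disjoint from $\Sigma$; pushing $\beta$ across $D$ then removes two intersection points, contradicting minimality and forcing each edge to meet each sphere at most once. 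The main obstacle I anticipate is precisely this reduction step: one must guarantee that the reducing disk can be chosen innermost so that its interior is disjoint from $S$ and from the remaining edges of $\iota(G)$, so that the push preserves both embeddedness and the dual property, and that all such moves can be performed $\F_n$--equivariantly by operating simultaneously on each orbit of arcs. This is the analogue, for the dual spine $G$, of Hatcher's theorem that a sphere system can be normalized with respect to $\Sigma$, and I expect it to demand the same innermost--disk bookkeeping.
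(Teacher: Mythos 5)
Your overall strategy---start with any dual embedding and remove excess intersections with $\Sigma$ by an innermost/minimal-position argument---is the same as the paper's, but the step you yourself flag as the main obstacle is a genuine gap, not deferred bookkeeping: the reducing disk can in general \emph{not} be chosen with interior disjoint from $S$. Let $s \in S$ be the sphere dual to $e$, and suppose the two excess points $x,y \in \iota(e) \cap \sigma$ straddle the unique point $\iota(e) \cap s$, so the subarc $\beta$ crosses $s$ once. This configuration really occurs: place the crossing point of $\iota(e)$ with $s$ in the disk of $s - \sigma$ on the opposite side of $\sigma$ from both endpoints of $\iota(e)$, which forces two crossings with $\sigma$ sandwiching the $s$--crossing. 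Then \emph{every} disk $D$ with $\partial D = \beta \cup \gamma$ meets $s$ (trivially, since $\beta \subset \partial D$ does), and moreover, since $s$ separates $M$, the loop $\beta \cup \gamma$ has even mod $2$ intersection with $s$, so the replacement arc $\gamma \subset \sigma$ must itself cross the circle $s \cap \sigma$ an odd number of times. So ``choose $D$ with interior disjoint from $S$'' is impossible here, and without further control the push across $D$ can destroy the dual property, which is the whole content of the lemma; simple connectivity of the components of $M - \Sigma$ does not rescue this, since spheres of $S$ cut through those components. A second, smaller gap: two intersections of $\iota(e)$ with the \emph{same} sphere $\sigma$ need not cobound a subarc with interior disjoint from $\Sigma$ (other spheres of $\Sigma$ may be crossed in between); to get a genuinely innermost pair one should take a backtrack of the walk in the tree dual to $\Sigma$ traced out by $\iota(e)$, which produces points $x,y$ on a common sphere whose open subarc $I$ misses all of $\Sigma$.

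The paper's proof repairs exactly this point: it never asks the homotopy disk to avoid $S$; instead it matches crossings. With $I$ innermost as above, duality implies $I$ meets $S$ in at most the single point $I \cap s$. Any circle of $S \cap \sigma$ separating $x$ from $y$ in $\sigma$ must belong to a sphere meeting $I$ (otherwise $I$ together with an arc in $\sigma$ would give a loop crossing a separating sphere of $S$ exactly once), hence only $s \cap \sigma$---a single circle, by normal form---can separate them, and it does so precisely when $I$ crosses $s$. Therefore there is an arc $J \subset \sigma$ from $x$ to $y$ crossing exactly the same spheres of $S$ as $I$ (namely $s$ once, or none at all). Homotoping $I$ to $J$ and pushing past $\sigma$ removes two points of $\iota(e) \cap \Sigma$ while preserving the dual property; performing this $\F_{n}$--equivariantly and iterating proves the lemma. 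If you replace your requirement ``$\operatorname{int} D$ disjoint from $S$'' by this normal-form matching of $S$--crossings along the replacement arc, your minimal-position formulation of the argument goes through.
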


\begin{proof}
Let $\iota_{0} \from G \hookrightarrow M$ be a dual embedding.  By general position, we can assume that $\iota_{0}(G) \cap S \cap \Sigma = \emptyset$ and that $\Sigma$ is disjoint from the vertices of $\iota_{0}(G)$.

Suppose that for some edge $e \subset G$, the image $\iota_{0}(e)$ intersects some sphere in $\Sigma$ in more than one point.  Let $s \in S$ be the sphere corresponding to $e$.  Fix some innermost pair of intersection points $x,y \in \iota_{0}(e)$ and let $\sigma \in \Sigma$ be the corresponding sphere.  Let $I$ be the subsegment of $\iota_{0}(e)$ with endpoints $x$ and $y$.  

Notice that any circle of intersection of $S \cap \sigma$ that separates $x$ and $y$ in $\sigma$ must correspond to a sphere $s' \in S$ such that $s' \cap I \neq \emptyset$.  Indeed, if not, since $S$ and $\Sigma$ are in normal form, there would a loop consisting of $I$ and an arc in $\sigma$ that intersects some sphere in $S$ exactly once.  This is a contradiction as spheres in $S$ are separating.  

Therefore, there is an arc $J \subset \sigma$ that intersects exactly the same set of spheres of $S$ as $I$, which is either $s$ or the empty set.  We can then homotope $I$ to $J$ and continue pushing in this direction to reduce the number of intersection points between $\iota_{0}(e)$ and $\Sigma$ by two.  Equivariantly perform this process to obtain a new dual embedding $\iota_{1} \from G \to M$ that has fewer $F_{n}$--orbits of intersect between  the image of $G$ and $\Sigma$.

Iterating this procedure we arrive at $\iota \from G \to M$ as in the statement of the lemma.
\end{proof}

If $\iota \from G \hookrightarrow M$ is an $\F_{n}$--equivariant embedding so that $\iota(G)$ is transverse to $\Sigma$ we can create a map $k_{\iota} \from G \to \Gamma$ by sending an edge $e$ to the edge path in $\Gamma$ corresponding to the spheres in $\Sigma$ crossed by $\iota(e)$.

\begin{lemma}\label{lem:morphism}
There exists a dual embedding $\iota \from G \hookrightarrow M$ for $S$ so that the associated map $k_{\iota} \from G \to \Gamma$ is a morphism.     
\end{lemma}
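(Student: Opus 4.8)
The plan is to start from the normal dual embedding produced by Lemma~\ref{lem:normal embedding} and then to improve it by finitely many equivariant isotopies until the induced change-of-marking map $k_\iota \from G \to \Gamma$ satisfies the two conditions of Definition~\ref{def:morphism}. Recall that $k_\iota$ records, for each edge $e$, the sequence of spheres of $\Sigma$ met by $\iota(e)$, so both conditions translate into statements about how the arcs $\iota(e)$ sit relative to $\Sigma$. Throughout I will assume, strengthening Remark~\ref{rem:diagonal edge}, that no sphere of $S$ is isotopic to a sphere of $\Sigma$; together with the filling hypothesis this guarantees that every $s \in S$ meets $\Sigma$, so that $\Sigma$ enters every complementary region of $M - S$.

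First I would dispense with condition (1). Lemma~\ref{lem:normal embedding} already arranges that $\iota(e)$ meets each sphere of $\Sigma$ in at most one point, so the spheres of $\Sigma$ crossed by $\iota(e)$ are pairwise distinct. The corresponding edges of $\Gamma$ are then distinct, and a walk in a tree that uses each edge at most once is an embedded, hence tight, edge path; choosing the parametrization of $e$ to cross these spheres at constant speed makes $k_\iota$ linear on $e$. The one thing this does not by itself guarantee is that $k_\iota(e)$ is nondegenerate, i.e.\ that $\iota(e)$ meets $\Sigma$ at all, and this I would secure by the placement of vertices. Because $\Sigma$ enters every region $P \subset M - S$, it cuts $P$ into at least two pieces, so each vertex $v_P$ of $G$ has at least two regions of $M - \Sigma$ available for $\iota(v_P)$. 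As $G$ is a tree, one can choose the points $\iota(v_P)$ equivariantly so that the two endpoints of every edge lie in distinct components of $M - \Sigma$; any arc joining them then necessarily crosses $\Sigma$, making every $k_\iota(e)$ nondegenerate.

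The heart of the matter is condition (2), the existence of at least two gates at every vertex, and this is the step I expect to be the main obstacle. Among all dual embeddings satisfying the conclusions above I would pass to one minimizing the number of $\F_n$--orbits of points of $\iota(G) \cap \Sigma$. Suppose condition (2) failed at a vertex $v$, so that the images of all edges incident to $v$ begin by crossing one and the same oriented edge $\vec\sigma$ of $\Gamma$. Writing $\Pi_0$ for the component of $M - \Sigma$ containing $\iota(v)$, this says that every germ of $\iota(G)$ at $\iota(v)$ leaves $\Pi_0$ through the single sphere $\sigma$. One can then push $\iota(v)$, together with these germs, across $\sigma$; by the description of normal form in Lemma~\ref{lem:intersection of components} the swept region meets no other sphere of $\Sigma$, so this move deletes exactly one intersection point from each incident edge and strictly lowers the orbit count, contradicting minimality. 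This forces at least two gates at $v$, which is condition (2).

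The delicate point in the last paragraph, and the one I would treat most carefully, is the interaction between the pushing move and nondegeneracy: pushing $\iota(v)$ across $\sigma$ shortens every incident image by one crossing, so if some incident edge had image of length exactly one, that image would become degenerate and the move would be illegitimate. I would rule this out using the vertex placement of the second paragraph, which keeps each neighbour strictly inside its region rather than adjacent to $\sigma$, so that no incident image has length one in the direction $\vec\sigma$. Verifying that the minimization can be carried out consistently with the nondegeneracy constraint, and that the push can be performed $\F_n$--equivariantly, is exactly where the combinatorics of Lemma~\ref{lem:intersection of components} and the filling hypothesis do the real work.
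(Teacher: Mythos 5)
Your overall mechanism is the same as the paper's: start from the normal dual embedding of \lemref{lem:normal embedding} (which already makes every $k_{\iota}(e)$ a tight edge path, since an edge crosses each sphere of $\Sigma$ at most once and a backtracking or closed walk in the tree $\Gamma$ would have to repeat an edge), and repair failures of the gate condition by equivariantly pushing the image of a one-gate vertex across the offending sphere $\sigma$; your minimization over embeddings is a repackaging of the paper's ``iterate, the intersection count strictly decreases.''  The genuine gap is in the extra nondegeneracy layer you added, at exactly the point you flagged.  Your claim that the vertex placement ``keeps each neighbour strictly inside its region rather than adjacent to $\sigma$, so that no incident image has length one in the direction $\vec\sigma$'' is false: an edge $e$ whose image is exactly the single edge of $\Gamma$ dual to $\sigma$ has its two endpoints on the two sides of $\sigma$, i.e., in \emph{distinct} components of $M - \Sigma$ --- which is precisely the configuration your placement permits (indeed enforces).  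So nothing in your setup excludes a one-gate vertex $v$ having an incident edge with image of length one; pushing $v$ across $\sigma$ then collapses that edge, the resulting embedding leaves the class over which you minimized, and the contradiction with minimality evaporates.  The paper does not face this obstruction because it imposes no nondegeneracy constraint: it only repairs the gate condition, pushing the vertex (and subarcs of edges with both endpoints on $\sigma$) across $\sigma$, with termination guaranteed by the strictly decreasing number of $\F_n$--orbits of intersections.

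Two supporting claims also fail as stated.  First, ``no shared sphere plus filling implies every $s \in S$ meets $\Sigma$'' is not true: a sphere $s$ disjoint from $\Sigma$ lies in a simply connected complementary piece $\Pi$ of $M - \Sigma$, and if $\Pi$ has at least four boundary spheres, $s$ may separate them two-and-two; such an $s$ is essential in $M$ and is isotopic to no sphere of $\Sigma$, so your guarantee that each region of $M - S$ meets at least two components of $M - \Sigma$ is unfounded.  Second, the placement step ``as $G$ is a tree, one can choose the points $\iota(v_P)$ equivariantly so that the endpoints of every edge lie in distinct components'' is not a tree-coloring problem once equivariance is imposed: the choice is made on finitely many $\F_n$--orbits, the quotient graph has loops and multiple edges, and a vertex may be adjacent to several translates of a single orbit, so the greedy root-and-propagate argument does not apply (trivial stabilizers of components of $M - \Sigma$ do handle edges joining a vertex to its own translate, but the remaining cases need an actual argument).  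Since the paper's proof never needs either claim, the cleanest repair is to drop the nondegeneracy requirement from your minimization and run the push-and-iterate argument as the paper does, rather than to prop the constraint up with these assertions.
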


\begin{proof}
Whenever a dual embedding $\iota_{0} \from G \hookrightarrow M$ satisfies Lemma~\ref{lem:normal embedding} the image of each edge $e \subset G$ is a tight edge path in $\Gamma$.  Thus the only way that such a dual embedding $\iota_{0}\from G \to M$ fails to produce a morphism is if there is some vertex $v \in G$ with adjacent edges $e_{1},\ldots, e_{\ell}$ (oriented to have $v$ as their initial vertex) and sphere $\sigma \in \Sigma$ so that the first intersection point of $\iota_{0}(e_{i}) \cap \Sigma$ lies in $\sigma$ for each $i = 1,\ldots, \ell$.  Arguing as in Lemma~\ref{lem:normal embedding} we can equivariantly homotope $\iota_{0}$ to locally reduce the number of intersections between the image of $G$ and $\Sigma$ by pushing the image of $v$ across $\sigma$ and pushing subarcs of edges with both endpoints on $\sigma$ across $\sigma$ as well.  

Iterating this procedure we arrive at $\iota \from G \to M$ as in the statement of the lemma.  
\end{proof}

A dual embedding $\iota \from G \to M$ satisfying the conclusions of of Lemma~\ref{lem:normal embedding} and \ref{lem:morphism} is said to be \emph{normal to $\Sigma$}.  

\begin{proposition}\label{prop:shadow = hull}
Suppose that $S$ and $\Sigma$ are filling sphere systems in $M$ and that $G$ and $\Gamma$ are the associated trees.  Fix an edge $\eta \in \Gamma$ and let $\sigma \in \Sigma$ be the associated sphere.  If $\iota \from G \hookrightarrow M$ is a dual embedding that is normal to $\Sigma$ then $\Shadow(\sigma) = \CH_\eta = \calH_{\eta}^{k_{\iota}}$.
\end{proposition}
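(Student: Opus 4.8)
The plan is to identify all three sets concretely via the dual embedding $\iota$ and then sandwich them together. First I would unwind $k_\iota$: by its very definition $k_\iota(e)$ traverses $\eta$ precisely when the arc $\iota(e)$ crosses $\sigma$, so
\[
\calP_\eta^{k_\iota} = \{\, e \subset G : \iota(e) \cap \sigma \neq \emptyset \,\},
\]
and the crossing locus $\iota(G) \cap \sigma$ is the $k_\iota$--preimage of the midpoint of $\eta$. Because $\iota$ is normal to $\Sigma$, each crossing point lies in the interior of a single edge and inside a single component of $\sigma - S$; and because $\iota(e)$ meets the sphere $s_e$ associated to $e$ in exactly one point, the crossing point lies on the half of $\iota(e)$ on one side of $s_e$, hence in a component of $M - S$ that is an endpoint of $e$ in $G$. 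Since $\Shadow(\sigma)$ is the subtree of $G$ dual to the circles $\sigma \cap S$, with vertex set $V_\sigma$ equal to the set of components of $M - S$ containing a component of $\sigma - S$, this shows every edge of $\calP_\eta^{k_\iota}$ is incident to $V_\sigma$.

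Next I would show $\calH_\eta^{k_\iota} \subseteq \Shadow(\sigma)$. As $G$ is a tree and every edge of $\calP_\eta^{k_\iota}$ meets $V_\sigma \subset \Shadow(\sigma)$, each edge of the convex hull of $\calP_\eta^{k_\iota}$ either lies in $\Shadow(\sigma)$ or is a pendant edge leaving $V_\sigma$: any connecting path between two edges of $\calP_\eta^{k_\iota}$ runs between two vertices of $V_\sigma$ and so lies in the subtree $\Shadow(\sigma)$. Each pendant edge has its far endpoint outside $V_\sigma$ and of valence one in the hull (a second incident hull-edge would force that endpoint onto a path between two vertices of $V_\sigma$, hence into $\Shadow(\sigma)$), so the pendant is extremal. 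Deleting the extremal edges to form the interior $\calH_\eta^{k_\iota}$ therefore removes every pendant and leaves a subtree of $\Shadow(\sigma)$.

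To close the chain
\[
\Shadow(\sigma) \subseteq \CH_\eta \subseteq \calH_\eta^{k_\iota} \subseteq \Shadow(\sigma),
\]
whose middle inclusion is the definition of $\CH_\eta$, I would prove $\Shadow(\sigma) \subseteq \CH_\eta$ on the boundary. By Lemmas~\ref{lem:hull} and \ref{lem:squares}, $e \in \CH_\eta$ exactly when $(k_\iota)_\infty(\vec e_\infty) \cap \vec{\eta}_\infty \neq \emptyset$ for all choices of orientation. The homeomorphism $(k_\iota)_\infty$ is $\F_n$--equivariant, hence, under the canonical identifications $\partial G \cong \partial \F_n \cong \partial \Gamma$, is the identity; under it the two pairs $\{\vec e_\infty, \overleftarrow{e}_\infty\}$ and $\{\vec{\eta}_\infty, \overleftarrow{\eta}_\infty\}$ are the two sides, in the end space of $M$, of the spheres $s_e$ and $\sigma$. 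Thus $e \in \CH_\eta$ if and only if the four subsets of $\partial\F_n$ determined by $s_e$ and $\sigma$ are all nonempty; when $s_e \cap \sigma \neq \emptyset$, i.e. $e \in \Shadow(\sigma)$, this holds, giving the desired inclusion and hence equality throughout.

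The crux is this last topological input: for essential spheres in normal form, meeting in a circle is equivalent to the two induced partitions of the end space $\partial \F_n$ being transverse. I would argue it directly from the geometry. If $s_e$ and $\sigma$ are disjoint then $\sigma$ lies on one side of $s_e$, so a whole side of $\sigma$ lands on one side of $s_e$ and one of the four subsets is empty. If instead they meet in the single circle allowed by normal form, then the normal-form prohibition on inessential disks guarantees that none of the four complementary regions of $s_e \cup \sigma$ in $M$ is a trivial ball, so each is unbounded and carries ends, making all four subsets nonempty. A secondary technical point to treat with care is the identification $\partial G = \partial \Gamma = \partial \F_n$ together with the verification that $(k_\iota)_\infty$ is the identity there, so that the intersection condition of Lemma~\ref{lem:squares} is genuinely a statement about a single fixed Cantor set rather than two a priori different boundaries.
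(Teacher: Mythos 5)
Your proposal is correct in outline, but it takes a genuinely different route from the paper on both inclusions. For $\calH_{\eta}^{k_{\iota}} \subseteq \Shadow(\sigma)$ the paper has a one-line separation argument: if $e \subseteq \calH_{\eta}^{k_{\iota}}$, both orientations of $e$ admit rays whose $\iota$--images meet $\sigma$, so both sides of $s_e$ contain points of the connected sphere $\sigma$, forcing $s_e \cap \sigma \neq \emptyset$. Your combinatorial convex-hull analysis (every edge of $\calP_{\eta}^{k_{\iota}}$ is incident to $V_\sigma$; connecting paths lie in the convex subtree $\Shadow(\sigma)$; pendant edges are extremal and disappear when passing to the interior) is valid but longer; you should also record the small case that a $\calP_{\eta}^{k_{\iota}}$--edge with \emph{both} endpoints in $V_\sigma$ already lies in $\Shadow(\sigma)$, which holds because the shadow is a connected subtree of $G$ whose vertex set is exactly $V_\sigma$. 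The real divergence is in $\Shadow(\sigma) \subseteq \CH_\eta$: the paper stays inside the trees, showing by a hands-on normal-form argument (using the gate condition of Definition~\ref{def:morphism}) that $\iota^{-1}(X)$ contains an infinite subtree for each of the four components $X$ of $M - (s_e \cup \sigma)$, and from this manufactures the rays demanded by Lemma~\ref{lem:squares} directly. You instead transport Lemma~\ref{lem:squares} to the end space of $M$: identify $\partial G \cong \partial \F_n \cong \partial \Gamma$, observe that $(k_\iota)_\infty$ is the canonical map, and reduce everything to the statement that all four complementary regions of $s_e \cup \sigma$ carry ends. This is softer and isolates cleanly what normal form buys; the bookkeeping you flag does go through, since any two $\F_n$--equivariant maps between cocompact $\F_n$--trees are at bounded distance (the displacement function is invariant and bounded on a fundamental domain), so $(k_\iota)_\infty$ is independent of the morphism and agrees with the orbit-map identification, and duality matches the two halves of $\partial G$ determined by $e$ with the ends of the two sides of $s_e$.

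The one inferential jump to repair is ``none of the four complementary regions is a trivial ball, \emph{so} each is unbounded.'' What this needs is: a compact region of $M$ whose boundary is an embedded $2$--sphere is a $3$--ball. This is true, but requires an input about $M$ --- for instance that $M$ is homeomorphic to the complement of a Cantor set in $S^{3}$, so the boundary sphere bounds a ball on each side of $S^{3}$ by Schoenflies, and compactness forces the region to avoid the Cantor set. Granting that, normal form (no disk of $s_e - \sigma$ isotopic rel boundary into $\sigma$) excludes the ball, hence each of the four regions is noncompact; and since each has compact frontier $d_{\pm} \cup \delta_{\pm}$, each carries an end of $M$, so all four end-subsets are nonempty and Lemma~\ref{lem:squares} together with Lemma~\ref{lem:hull} gives $e \subseteq \CH_\eta$. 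With these two points spelled out, your proof is complete.
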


\begin{proof}
Suppose that $\iota \from G \hookrightarrow M$ is a dual embedding that is normal to $\Sigma$.  Let $e \subset G$ be an edge and $s \in S$ the sphere corresponding to $e$.  

First suppose that $e \subseteq \Shadow(\sigma)$.  Thus $s \cap \sigma$ is non-empty and as the sphere systems are in normal form, this intersection is a single circle.  Let $X$ be one of the four components of $M - (s \cup \sigma)$.  Decompose $\partial X = d \cup \delta$ where $d$ is a subdisk of $s$ and $\delta$ is a subdisk of $\sigma$.

We claim that $\iota^{-1}(X) \subseteq G$ contains an infinite subtree.  Suppose otherwise, thus $\iota^{-1}(X)$ is a finite sub-forest $T$.  At most one extremal vertex of $T$ corresponds to an intersection of $\iota(e)$ and $s$ (which is in $d$), the remaining extremal vertices correspond to intersections of $\delta$ with edges in $\iota(G)$.  

If $T$ is empty or has some component contained in an edge of $G$ then an innermost disk of $\delta$ (with respect to the intersection circles $\delta \cap S$) is homotopic relative to its boundary to a disk in $S$, violating the assumption that $S$ and $\Sigma$ are in normal form.  Else, if for some component $T_{0} \subseteq T$, we have that $\iota(T_{0})$ does not intersect $s$, then for any interior vertex of $T$, as we saw in the proof of Lemma~\ref{lem:morphism}, the map $k_{\iota}$ only has one gate, violating the assumption that $\iota$ is not normal.  

Thus we may assume that $T$ is connected and has some interior vertex $v$, that we assume is adjacent to some extremal edge of $T$ that is not contained in $e$.  We label the edges $e_{0},e_{1},\ldots, e_{\ell}$ adjacent to $v$ where $\iota(e_{i})$ intersects $\sigma$ for $i = 1,\ldots \ell$.  Let $s_{i}$ be the spheres of $S$ corresponding to $e_{i}$ for $i = 0,\ldots \ell$.  Then $\sigma$ must be disjoint from $s_{i}$ for $i = 1,\ldots,\ell$ for otherwise there is a component of $M - (s_{i} \cup \sigma)$ whose pre-image in $G$ contains a component that contained in a single edge, which we already ruled out.  But in this case we have that $\sigma - s_{0}$ contains a disk isotopic relative to its boundary to a disk in $s_{0}$, which again violates the assumption that $S$ and $\Sigma$ are in normal form.     

Hence we can find a ray $\vec r$ starting with $e$ so that $\iota(\vec r)$ is eventually contained in $X$.  Since $X$ was arbitrary, this shows that for each orientation of $\vec e$ for $e$ and $\vec{eta}$ for $\eta$ we can find a ray $\vec r$, crossing $\vec e$ with the specified orientation so that $k_{\iota}(\vec r \, ) \in \vec \eta_{\infty}$.  By Lemma~\ref{lem:squares}, we have that $e \times \eta \subseteq \Core(G,\Gamma)$ and so $e \subseteq \CH_{\eta}$ by Lemma~\ref{lem:hull}.  Hence $\Shadow(\sigma) \subseteq \CH_\eta \subseteq \calH_{\eta}^{k_{\iota}}$.

Now suppose that $e \subseteq \calH^{k_{\iota}}_{\eta}$.  Then for each orientation $\vec e$, there is a ray of the form $\vec e \cdot \vec r$ so that $k_{\iota}(\vec r)$ intersects $\eta$ and hence $\iota(\vec r)$ intersects $\sigma$.  Since $s$ separates $M$ and $\sigma$ is connected, this shows that $s$ intersects $\sigma$, i.e., $e \subseteq \Shadow(\sigma)$.  Hence $\calH^{k_{\iota}}_{\eta} \subseteq \Shadow(\sigma)$ completing the proof.
\end{proof}

In other words, Proposition~\ref{prop:shadow = hull} states that $\Shadow(\sigma) \subset G$ is the interior of the convex hull of $k_{\iota}^{-1}(\sigma)$.

Recall the relation between a sphere system $S$ and the corresponding free splitting $G$ mentioned in Section~\ref{sec:sphere-splittings}: vertices of $G$ correspond to connected components $M - S$ and edges corresponding to non-empty intersection between the closures of the components, i.e., spheres in $S$.  We can define a map $\Core(S,\Sigma) \to G \times \Gamma$ as follows:
\begin{itemize}
\item The image of a vertex $(P,\Pi)$ is the vertex $(v,\nu) \in G \times \Gamma$ where $v$ is the vertex corresponding to $P \subset M - S$ and $\nu$ is the vertex corresponding to $\Pi \subset M - \Sigma$.
\item The image of an edge $(s,\Pi)$ is the edge $(e, \nu) \subset G \times \Gamma$ where $e$ is the edge corresponding to $s \in S$ and $\nu$ is the vertex corresponding to $\Pi \subset M - \Sigma$.  Likewise, the image of an edge $(P,\sigma)$ is the edge $(v, \eta) \subset G \times \Gamma$ where $v$ is the vertex corresponding to $P \subset M - S$  and $\eta$ is the edge corresponding to $\sigma \in \Sigma$.
\item The image of the square $s \times \sigma$ is $e \times \eta \subset G \times \Gamma$ where $e$ is the edge corresponding to $s \in S$ and 
$\eta$ is the edge corresponding to $\sigma \in \Sigma$.
\end{itemize}

The following theorem is implicit in the proof of \cite[Proposition~2.1]{ar:Horbez12}.  There, Horbez uses a characterization by Guirardel of the core as the minimal closed, connect, $\F_{n}$--invariant subset of $G \times \Gamma$ that have connected fibers~\cite[Proposition~5.1]{ar:Guirardel05}.  We avoid using this characterization by using Lemma~\ref{lem:hull} and Proposition~\ref{prop:shadow = hull}.  

\begin{theorem}\label{th:core to core}
If $G,\Gamma \in \cv$ correspond to $S,\Sigma \in \sphere$, which do not share a sphere, then the map $\Core(S,\Sigma) \to G \times \Gamma$ induces an $\F_{n}$--equivariant isomorphism of square complexes $\Core(S,\Sigma) \to \Core(G,\Gamma)$.
\end{theorem}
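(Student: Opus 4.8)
The plan is to verify directly that the cellular map $\Psi \from \Core(S,\Sigma) \to G \times \Gamma$ described above is an injective cellular map whose image is exactly $\Core(G,\Gamma)$ and which restricts to a bijection on open cells preserving the face relations; the $\F_{n}$--equivariance is then automatic, since every ingredient in the definition of $\Psi$ (the components of $M - S$ and of $M - \Sigma$, and the sphere--edge correspondence) is defined $\F_{n}$--equivariantly. First I would record that $\Psi$ is injective in each dimension: the assignments $P \mapsto v$, $\Pi \mapsto \nu$, $s \mapsto e$ and $\sigma \mapsto \eta$ are bijections between components (respectively spheres) and vertices (respectively edges), and by Lemma~\ref{lem:intersection of components}(1) each pair $(P,\Pi)$ determines at most one region of $M - (S \cup \Sigma)$, so no two cells of $\Core(S,\Sigma)$ collide.

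The heart of the argument is matching the $2$--cells. By Definition~\ref{def:sphere core}, a square $s \times \sigma$ is a $2$--cell of $\Core(S,\Sigma)$ precisely when $s \cap \sigma \neq \emptyset$, and $\Psi$ sends it to $e \times \eta \subset G \times \Gamma$, where $e$ and $\eta$ are dual to $s$ and $\sigma$. Fixing a dual embedding normal to $\Sigma$ (Lemma~\ref{lem:morphism}) with induced morphism $k_{\iota} \from G \to \Gamma$, Proposition~\ref{prop:shadow = hull} gives $\Shadow(\sigma) = \CH_\eta$, while $e \subseteq \Shadow(\sigma)$ is by Definition~\ref{def:shadow} exactly the condition $s \cap \sigma \neq \emptyset$. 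Combining this with the criterion of Lemma~\ref{lem:hull}, $e \times \eta$ is a $2$--cell of $\Core(G,\Gamma)$ if and only if $e \subseteq \CH_\eta = \Shadow(\sigma)$, if and only if $s \cap \sigma \neq \emptyset$. Hence $\Psi$ induces a bijection between the squares of $\Core(S,\Sigma)$ and the squares of $\Core(G,\Gamma)$.

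It then remains to see that each complex is the union of its closed $2$--cells, so that this bijection on squares, together with the compatible correspondence of boundary vertices and edges read off from the last bullet of Definition~\ref{def:sphere core}, upgrades to an isomorphism of square complexes. On the sphere side this holds because, $k_{\iota}$ being a morphism, every edge $e$ of $G$ expands to a nondegenerate tight path, so the dual sphere $s$ must meet $\Sigma$; thus every $S$--piece is a face of some square, and symmetrically for $\Sigma$--pieces, so $\Core(S,\Sigma)$ has no cell that is not a face of a square. On the tree side, since $S$ and $\Sigma$ share no sphere the intersection number is positive and, by Guirardel's structure theory, $\Core(G,\Gamma)$ is purely two-dimensional and equals the union of its closed heavy squares. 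Finally I would confirm that $\Psi$ respects the attaching data: the four edges $(s,\Pi_1)$, $(P_1,\sigma)$, $(s,\Pi_2)$, $(P_2,\sigma)$ and the four vertices bounding $s \times \sigma$ map to the four edges and four vertices bounding $e \times \eta$, which is immediate from the definition of $\Psi$.

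The step I expect to be the main obstacle is confirming that $\Core(G,\Gamma)$ harbours no edges or vertices that fail to be faces of heavy squares, that is, that the two complexes agree as cell complexes and not merely on their sets of top cells. Once both cores are known to be unions of their squares, the remaining verifications are exactly the bookkeeping already packaged in Lemma~\ref{lem:hull} and Proposition~\ref{prop:shadow = hull}.
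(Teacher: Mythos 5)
Your second paragraph --- matching the squares via Lemma~\ref{lem:hull} and Proposition~\ref{prop:shadow = hull} --- is precisely the paper's argument (the paper organizes the same computation into slices $C_{\sigma} = \{ s \times \sigma \mid s \cap \sigma \neq \emptyset \}$ over each $\sigma \in \Sigma$ and identifies the image of each slice with $\{ e \times \eta \mid e \subseteq \Shadow(\sigma) = \CH_{\eta} \}$). The genuine gap is in your third paragraph, and it is not merely the ``main obstacle'' you flag: the claim that both cores are unions of their closed $2$--cells is \emph{false} under the stated hypotheses. The hypothesis excludes only shared spheres, not disjoint ones: a sphere $s \in S$ can be disjoint from every sphere of $\Sigma$ without being isotopic to any of them (take $s$ inside a simply-connected complementary region $\Pi$ of $\Sigma$ with at least four boundary spheres, separating them two from two). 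Then the $S$--piece $s$ is an entire sphere, the edge $(s,\Pi)$ of $\Core(S,\Sigma)$ bounds no square, and $\Core(G,\Gamma)$ correspondingly contains the $1$--dimensional piece $e \times \{\nu\}$. Such free edges are not a pathology the paper avoids --- they are the engine of its main argument: Lemma~\ref{Lem:Isolated} and the proof of Theorem~\ref{Thm:Main} rest on locating exactly such free edges in cores of filling systems in normal form.

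Both of your proposed justifications fail at this point. On the sphere side, nondegeneracy of $k_{\iota}$ on an edge $e$ only says that $\iota(e)$ crosses some sphere of $\Sigma$; those crossings can occur along subarcs of $\iota(e)$ away from the dual sphere $s$, so you cannot conclude $s \cap \Sigma \neq \emptyset$. On the tree side, positive intersection number guarantees at least one heavy square somewhere, but Guirardel's theory does not make the core purely $2$--dimensional: an individual edge of $G$ whose one-edge splitting is compatible with $\Gamma$ carries no squares, and the core is in general a mix of dimensions. The paper's own proof sidesteps the issue entirely: it observes that the map is injective, $\F_{n}$--equivariant and preserves the square structure, and then identifies the squares in the image with the squares of $\Core(G,\Gamma)$ slice by slice --- content identical to your paragraph two --- without asserting that the complexes are unions of squares. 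To complete your route you would instead have to treat the lower-dimensional cells directly, e.g.\ show that a free edge $(s,\Pi)$ maps to a segment $e \times \{\nu\}$ lying in $\Core(G,\Gamma)$ by exhibiting, for each orientation $\vec{e}$, sequences witnessing heaviness of the relevant quadrants --- not by asserting that no such cells exist.
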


\begin{proof}
It is clear that the map is injective, $\F_{n}$--equivariant and preserves the square structure.  We just need to show that the image is $\Core(G,\Gamma)$.  For each $\sigma \in \Sigma$, let  $S_{\sigma} = \{ s \in S \mid s \cap \sigma \neq \emptyset\}$.  Notice that the edges in $G$ corresponding to $S_{\sigma}$ is $\Shadow(\sigma)$ by definition.  We can decompose the core $\Core(S,\Sigma)$ vertically into horizontal slices $C_{\sigma} = \{ s \times \sigma \mid s \in S_{\sigma}\}$.  Now fix an $\sigma$ and let $\eta$ by the corresponding edges of $\Gamma$.  Then image of the strip $C_{\sigma}$ is exactly the set of squares $\{ e \times \eta \mid e \subseteq \Shadow(\sigma)\}$.  By Proposition~\ref{prop:shadow = hull} we can also write this as $\{ e \times \eta \mid e \subseteq \CH_{\eta}\}$.  By Lemma~\ref{lem:hull} we can further write this as $\{ e \times \eta \mid e \times \eta \subseteq \Core(G,\Gamma)\}$.  Hence the image of the map is as claimed.  
\end{proof}


\section{Surgery and the core}\label{sec:surgery and core}
%
%

The purpose of this section is to show how the core changes along a surgery path in the sphere graph.

\subsection{Surgery Sequences}\label{subsection:surgery}

Suppose that $S, \Sigma \in \calS$ and assume that they are in normal form.  We now describe a path from $S$ to $\Sigma$ in $\calS$ using a surgery procedure introduced by Hatcher~\cite{ar:Hatcher95}.  It is exactly these paths that appear in the the main theorem of this paper. 

Fix a sphere $\sigma \in \Sigma$ that intersects some spheres of $S$.  The intersection circles define a pattern of disjoint circles on $\sigma$, each of which bounds two disks on $\sigma$.  Choose an innermost disk $\delta$ in this collection, i.e., a disk that contains no other disk from this collection, and let $\alpha$ be its boundary circle.  The sphere $s \in S$ containing $\alpha$ is the union of two disks $d_{+}$ and $d_{-}$ that share the boundary circle $\alpha$.  Briefly, surgery replaces the sphere $\sigma$ with new spheres $d_{+} \cup \delta$ and $d_{-} \cup \delta$.  One problem that arises is that the new sphere system and $S$ are not in normal form.  This happens when some innermost disk $\delta'$ in a sphere $\sigma' \in \Sigma$ is parallel rel $s$ to $\delta$.  To address this, we remove all such disks at once so that the resulting sphere system and $S$ are in normal form (Lemma~\ref{lem:normal after surgery}).

Let $\{ \alpha_{i} \}_{i = k}^{\ell}$ be the maximal family of intersection circles in $s \cap \Sigma$ such that:
\begin{enumerate}

\item $k \leq 0 \leq \ell$,

\item $\alpha_{i} \subset d_{-}$ for $i \leq 0$ and $\alpha_{i} \subset d_{+}$ for $i \geq 0$ (this implies that $\alpha_{0} = \alpha$), and

\item for $k \leq i < \ell$, the circles $\alpha_{i}$ and $\alpha_{i+1}$ co-bound an annulus $A_{i} \subset s$ whose interior is disjoint from $\Sigma$.

\end{enumerate}
Related to these circles, we let $\{\delta_{i} \}_{i = \kappa}^{\lambda}$ be the maximal family of innermost disks in $\Sigma$ such that:
\begin{enumerate}

\item $\kappa \leq 0 \leq \lambda$, 
\item $\partial \delta_{i} = \alpha_{i}$, and

\item for $\kappa \leq i < \lambda$, the sphere $\delta_{i} \cup A_{i} \cup \delta_{i+1}$
bounds an embedded 3--ball, i.e., $\delta_{i}$ and $\delta_{i+1}$ are parallel rel $s$.

\end{enumerate}
See Figure~\ref{fig:c and delta} for an example illustrating this set-up and notation.

\begin{figure}[ht]
\begin{tikzpicture}
\node[inner sep=0pt] (a0) at (65:5 and 2) {};
\node[inner sep=0pt] (a1) at (-65:5 and 2) {};
\draw[thick,dashed,fill=red!30] (a0) to[out=-107,in=107] (a1) to[out=73,in=-73] (a0);
\draw[thick] (a0) to[out=-107,in=107] (a1);
\node[above] at (a0) {$\alpha_2$};
\node[inner sep=0pt] (b0) at (75:5 and 2) {};
\node[inner sep=0pt] (b1) at (-75:5 and 2) {};
\draw[thick,dashed,fill=red!30] (b0) to[out=-107,in=107] (b1) to[out=73,in=-73] (b0);
\draw[thick] (b0) to[out=-107,in=107] (b1);
\path (b0) -- (b1) node[pos=0.475] {$\delta_1$};
\node[above] at (b0) {$\alpha_1$};
\node[inner sep=0pt] (c0) at (85:5 and 2) {};
\node[inner sep=0pt] (c1) at (-85:5 and 2) {};
\draw[thick,dashed,fill=red!30] (c0) to[out=-107,in=107] (c1) to[out=73,in=-73] (c0);
\draw[thick] (c0) to[out=-107,in=107] (c1);
\node[above] at (c0) {$\alpha_0$};
\path (c0) -- (c1) node[pos=0.475] {$\delta_0$};
\node[inner sep=0pt] (d0) at (95:5 and 2) {};
\node[inner sep=0pt] (d1) at (-95:5 and 2) {};
\draw[thick,dashed,fill=red!30] (d0) to[out=-107,in=107] (d1) to[out=73,in=-73] (d0);
\draw[thick] (d0) to[out=-107,in=107] (d1);
\node[above] at (d0) {$\alpha_{-1}$};
\path (d0) -- (d1) node[pos=0.475] {$\delta_{-1}$};
\node[inner sep=0pt] (e0) at (105:5 and 2) {};
\node[inner sep=0pt] (e1) at (-105:5 and 2) {};
\draw[thick,dashed,fill=red!30] (e0) to[out=-107,in=107] (e1) to[out=73,in=-73] (e0);
\draw[thick] (e0) to[out=-107,in=107] (e1);
\node[above] at (e0) {$\alpha_{-2}$};
\path (e0) -- (e1) node[pos=0.475] {$\delta_{-2}$};
\node[inner sep=0pt] (f0) at (115:5 and 2) {};
\node[inner sep=0pt] (f1) at (-115:5 and 2) {};
\draw[thick,dashed,fill=red!30] (f0) to[out=-107,in=107] (f1) to[out=73,in=-73] (f0);
\draw[thick] (f0) to[out=-107,in=107] (f1);
\node[above] at (f0) {$\alpha_{-3}$};
\path (f0) -- (f1) node[pos=0.475] {$\delta_{-3}$};
\node[inner sep=0pt] (h0) at (125:5 and 2) {};
\node[inner sep=0pt] (h1) at (-125:5 and 2) {};
\draw[thick,dashed] (h0) to[out=-107,in=107] (h1) to[out=73,in=-73] (h0);
\draw[thick] (h0) to[out=-107,in=107] (h1);
\node[above] at (h0) {$\alpha_{-4}$};
\draw[thick] (4.1,0.4) ellipse (0.25 and 0.25);
\draw[thick] (4.1,0.4) ellipse (0.15 and 0.15);
\draw[thick] (4.1,-0.4) ellipse (0.25 and 0.25);
\draw[thick] (4.1,0) ellipse (0.45 and 0.75);
\draw[thick] (2.7,1.2) ellipse (0.2 and 0.2);
\draw[thick] (-3.4,1) ellipse (0.2 and 0.2);
\draw[thick] (-4.2,0.15) ellipse (0.2 and 0.2);
\draw[thick] (-4.2,0.15) ellipse (0.4 and 0.4);
\draw[thick] (-3.5,-1) ellipse (0.2 and 0.2);
\node[inner sep=0pt] (g0) at (50:5 and 2) {};
\node[inner sep=0pt] (g1) at (-50:5 and 2) {};
\draw[thick] (g0) to[out=-107,in=107] (g1);
\draw[thick,dashed] (g1) to[out=73,in=-73] (g0);
\filldraw[black] (1.55,1.2) -- (1.85,1.2) -- (1.85,1.5) -- (1.55,1.5) -- cycle;
\draw[|-|] (0.45,-2.2) -- (5,-2.2) node[pos=0.5,below] {$d_{+}$};
\draw[|-|] (0.35,-2.2) -- (-5,-2.2) node[pos=0.5,below] {$d_{-}$};
\draw[very thick,green] (0,0) ellipse (5 and 2);
\node at (5.3,0) {$s$};
\end{tikzpicture}
\caption{An example illustrating the curves $\{ \alpha_{i} \}$ and the disks $\{ \delta_{i} \}$.  The green sphere is $s$, its intersection with $\Sigma$ is in black and the red disks are the innermost disks in $\Sigma$.  The small black box represents an obstruction to isotoping the disk bounded by $\alpha_{2}$ to $\delta_{1}$ relative to $s$.  In this example $\kappa = -3$ and $\lambda = 1$.
}\label{fig:c and delta}
\end{figure}
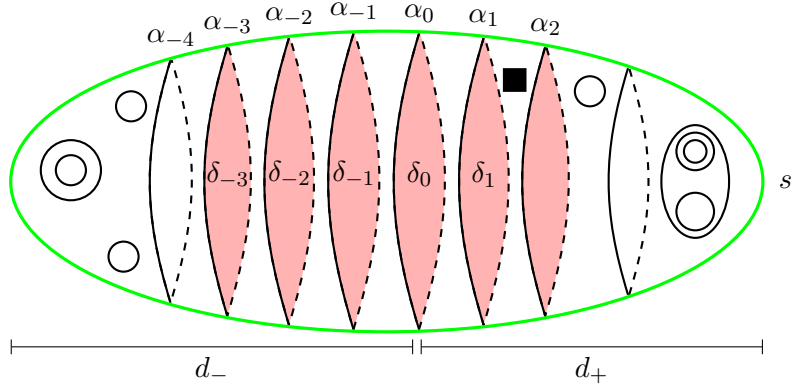

Using this set-up we can now describe a surgery of $S$.  Let $\delta_{-}$ be a parallel copy of $\delta_{\kappa}$ rel $s$ such that $\partial \delta_{-}$ and $\alpha_{\kappa}$ co-bound an annulus whose interior is disjoint from $\Sigma$ and $A_{\kappa}$.  Similarly let $\delta_{+}$ be a parallel copy of $\delta_{\lambda}$ rel $s$ such that $\partial \delta_{+}$ and $\alpha_{\lambda}$ co-bound an annulus whose interior is disjoint from $\Sigma$ and $A_{\lambda}$.  Set $\hat{d}_{-}$ to be the subdisk of $d_{-}$ with boundary $\partial \delta_{-}$ and set $\hat{d}_{+}$ to be the subdisk of $d_{+}$ with boundary $\partial \delta_{+}$  We get two new spheres $s_{-} = \hat{d}_{-} \cup \delta_{-}$ and $s_{+} = \hat{d}_{+} \cup \delta_{+}$.  We say that $\hat{S} = (S - \F_{n}\{ s\}) \cup \F_{n}\{ s_{+},s_{-} \}$ is obtained from $S$ by performing a \emph{surgery on $S$ with respect to $\Sigma$}.  

In what follows, it is important to record the history of the portions of the new spheres and so we introduce notion to this effect.  Suppose that $\hat{S}$ is the result of a surgery of $S$ with respect to $\Sigma$ and that $\hat{s} \in \hat{S}$ is (a translate of) one of the newly created spheres $s_{*} = \hat{d}_{*} \cup \delta_{*}$ for $* \in \{+,-\}$.  We call $d_{*}$ the \emph{portion of $\hat{s}$ from $S$}, denote it by $\hat{s}^{S}$.  Similarly, we call $\delta_{*}$ the \emph{portion of $\hat{s}$ from $\Sigma$}, denote it by $\hat{s}^{\Sigma}$.  Thus $\hat{s} = \hat{s}^{S} \cup \hat{s}^{\Sigma}$.  Notice that $\hat{s}^{S} \subseteq S$ and also that $\hat{s}^{\Sigma}$ is parallel rel $s$ to a disk in $\Sigma$.  For all other spheres $s \in \hat{S}$ we set $s^{S} = s$ and $s^{\Sigma} = \emptyset$. 

Our definition of surgery differs slightly from the standard in three ways: one, we do not remove parallel spheres in $\hat{S}$, two, we perform surgery along parallel innermost disks in a single step, and three, we do not isotope $S'$ to be in normal form with respect to $\Sigma$.  That we do not remove parallel spheres is in keeping with our definition of sphere systems from Section~\ref{sec:sphere-splittings}.  Justification of the latter two differences is the following lemma that shows that by performing surgery along the parallel innermost disks we can eliminate the need to perform a subsequent isotopy.  

\begin{lemma}\label{lem:normal after surgery}
Let $\hat{S}$ be the result of a surgery on $S$ with respect to $\Sigma$.  Then $\hat{S}$ and $\Sigma$ are in normal form.
\end{lemma}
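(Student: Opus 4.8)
The plan is to verify the two defining conditions of normal form directly against the explicit description of the surgered spheres. Recall that $\hat S$ differs from $S$ only in that the $\F_n$--orbit of the sphere $s$ has been replaced by the orbits of $s_{-} = \hat d_{-} \cup \delta_{-}$ and $s_{+} = \hat d_{+} \cup \delta_{+}$. All other spheres of $\hat S$ already coincide with spheres of $S$, which were in normal form with $\Sigma$ to begin with, so the content of the lemma is to check normality for the two new spheres $s_{\pm}$ against an arbitrary $\sigma' \in \Sigma$. I would fix such a $\sigma'$ and analyze the intersection circles $s_{\pm} \cap \sigma'$, tracking separately the contributions coming from the ``$S$--portion'' $\hat d_{\pm}$ and from the ``$\Sigma$--portion'' $\delta_{\pm}$.

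The key geometric inputs are as follows. First, $\hat d_{\pm}$ is a subdisk of the original disk $d_{\pm} \subset s$, so any intersection circle of $\hat d_{\pm}$ with $\sigma'$ is an intersection circle of $s$ with $\sigma'$; since $S$ and $\Sigma$ were in normal form, $s \cap \sigma'$ is at most one circle, and by the maximality built into the families $\{\alpha_i\}$ and $\{\delta_i\}$, such a circle either lies outside $\hat d_{\pm}$ or is one of the $\alpha_i$ that has been cut off by the choice of $\partial \delta_{\pm}$. Second, $\delta_{\pm}$ is parallel rel $s$ to an innermost disk of $\sigma$ in $\Sigma$, so it is disjoint from $\Sigma$ except possibly near its boundary; the parallel copies $\delta_{\pm}$ were chosen precisely so that their boundaries and the outermost circles $\alpha_\kappa, \alpha_\lambda$ co-bound annuli whose interiors miss $\Sigma$. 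The heart of the argument is that performing the surgery along the entire maximal parallel family $\delta_\kappa, \ldots, \delta_\lambda$ at once is exactly what removes the potential violations: any two innermost disks that are parallel rel $s$ would, if only one were removed, create a bigon or a non-normal disk, and absorbing the whole family eliminates these. I would show that each intersection circle $s_{\pm} \cap \sigma'$ is a single transverse circle, and that no disk of $s_{\pm} - \sigma'$ is isotopic rel boundary into $\sigma'$, by appealing to an innermost-disk argument: a violating disk would produce a $3$--ball exhibiting a parallelism that contradicts the maximality of the family $\{\delta_i\}$ (condition (3) in its definition) or the normal form of $S$ and $\Sigma$.

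Concretely, I would organize the verification into cases according to where $\sigma'$ meets $s_{\pm}$: (i) $\sigma'$ meets only the $S$--portion $\hat d_{\pm}$, in which case normality is inherited from the normal form of $S$ and $\Sigma$ together with the fact that $\hat d_{\pm} \subseteq s$; (ii) $\sigma'$ meets only the $\Sigma$--portion $\delta_{\pm}$, in which case I use that $\delta_{\pm}$ is parallel into $\Sigma$ and that distinct members of $\Sigma$ are in normal form with each other (more precisely, that $\delta_\pm$ lies in a product region where it meets $\sigma'$ in at most one circle); and (iii) $\sigma'$ meets both portions across the boundary circle $\partial \delta_{\pm}$, which is the delicate case requiring that the surgery curve $\alpha$ and the annuli $A_i$ were chosen with interiors disjoint from $\Sigma$. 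In each case the two conditions of the Hensel--Osajda--Przytycki normal form are checked: at most one circle of intersection, and no null-isotopic-into-$\sigma'$ disk.

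The main obstacle I anticipate is case (iii) and, more generally, ruling out a disk of $s_{\pm} - \sigma'$ that is isotopic rel boundary to a disk in $\sigma'$. This is exactly the failure mode described before the lemma, where an innermost disk $\delta'$ of some $\sigma'$ is parallel rel $s$ to $\delta$. The whole point of defining the maximal parallel family $\{\delta_i\}_{i=\kappa}^{\lambda}$ and surgering along all of it simultaneously is to forestall this, so the crux of the proof is a careful argument that after removing the entire family, any remaining innermost disk of $\Sigma$ meeting $s_{\pm}$ cannot be parallel rel $s_{\pm}$ to a disk on $s_{\pm}$; otherwise it would have been forced into the family by maximality, or it would have created a parallelism already present (and hence excluded) in the normal form of $S$ and $\Sigma$. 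I expect this to reduce to a clean innermost-disk-and-$3$--ball argument, but setting up the bookkeeping of which circles survive on $\hat d_{\pm}$ versus $\delta_{\pm}$, and correctly handling the boundary annuli, is where the real work lies.
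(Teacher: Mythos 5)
Your proposal takes essentially the same route as the paper's proof: condition (1) is inherited because every intersection of $s_{\pm}$ with $\Sigma$ lies in the portion $\hat{d}_{\pm} \subset s$, and a violating disk for condition (2) is eliminated by an innermost-disk argument showing it would force an extension of the maximal parallel family $\{\delta_{i}\}_{i=\kappa}^{\lambda}$, a contradiction. One simplification worth noting: $\delta_{\pm}$ is entirely disjoint from $\Sigma$ (not merely away from its boundary), so your cases (ii) and (iii) are vacuous, and the delicate step is instead showing that an innermost violating disk $d \subset s_{\pm}$ must contain $\delta_{\pm}$, whence $d - \delta_{\pm}$ is an annulus in $s$ with interior disjoint from $\Sigma$ that enlarges the family.
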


\begin{proof}
Suppose otherwise.  As $S$ and $\Sigma$ are in normal form by assumption and normal form is a local condition, it must be that one of the newly created spheres is not in normal form with respect to $\Sigma$.  Denote this sphere by $\hat{s} = \hat{d} \cup \delta$ where $\hat{d}$ is a subdisk of the surgered sphere $s \in S$ and $\delta$ is a disk parallel rel $s$ to a disk in $\Sigma$.  Any intersection between $\hat{s}$ and some sphere  of $\Sigma$ must lie in $\hat{d} \subset s$ and hence $\hat{s}$ and a given sphere in $\Sigma$ intersect transversely in at most one circle as the same held for $s \in S$.

Therefore, if $\hat{s}$ is not in normal form with respect to $\Sigma$, then there is a sphere $\sigma \in \Sigma$ such that one of the disks in $\hat{s} - \sigma$, denote it $d$, is isotopic relative to its boundary to a disk in $\sigma$, denote it $\delta'$.  Without loss of generality, we can assume that this disk is innermost on $\hat{s}$, i.e., no subdisk of $d$ is isotopic relative to its boundary to a disk in some sphere of $\Sigma$.  The disk $d$ cannot lie entirely in $\hat{d}$ since $s$ and $\Sigma$ are in normal form by assumption.  Hence $d$ contains $\delta$.  Let $A$ be the annulus such that $d = A \cup \delta$.  Since $d \cup \delta'$ bounds a 3--ball, the assumptions that $S$ and $\Sigma$ are in normal form and that $d$ is innermost implies that $\Sigma$ is disjoint from the interior of $A$.  This contradicts the maximality assumption on the family of disks $\{\delta_{i}\}_{i = \kappa}^{\lambda}$.  Indeed, without loss of generality we can assume that $\delta = \delta_{+}$.  Then $A_{\lambda} \cup A$ is an annulus in $s$ whose interior is disjoint from $\Sigma$ and so $\partial \delta' = \alpha_{\lambda+1}$ and further $\delta_{\lambda} \cup (A_{\lambda} \cup A) \cup \delta'$ bounds an embedded 3--ball.    Hence $\hat{S}$ and $\Sigma$ are in normal form.
\end{proof}

\begin{definition}\label{def:surgery sequence}
A \emph{surgery sequence from $S$ to $\Sigma$} is a finite sequence of sphere systems: \[ S = S_{1}, \ldots, S_{m}  \] such that $S_{i+1}$ is the result of a surgery of $S_{i}$ with respect to $\Sigma$ and $d_{\calS}(\Sigma,S_m) \leq 1$.
\end{definition}

It is a standard fact that if $d_{\calS}(S,\Sigma) \geq 2$, then there is a surgery sequence from $S$, see for instance~\cite[Lemma~2.2]{un:HH}.  Further $d_{\calS}(S_{i},S_{i+1}) \leq 2$ as both $S_{i}$ and $S_{i+1}$ are dominated by $S_{i} \cup S_{i+1}$.

The discussion and notion regarding portions from $S$ and from $\Sigma$ make sense for surgery sequences as well by induction.  Indeed, suppose that $S_{i+1}$ is obtained from $S_{i}$ by a surgery with respect for $\Sigma$, specifically, assume that the (orbit of the) sphere $s \in S_{i}$ is split into (the orbit of) two spheres $s_{-} = \hat{d}_{-} \cup \delta_{-}$ and  $s_{+} = \hat{d}_{+} \cup \delta_{+}$ in $S_{i+1}$.  Then we have $s = \hat{d}_{-} \cup A \cup \hat{d}_{+}$ for some annulus $A$, the boundary curves of which are parallel to circles in $\Sigma$ rel $s$.  By choosing $A$ sufficiently narrow enough, we can assume that the annuli of $s$ witnessing the isotopy are contained in $s^{S}$.  We set $s_{*}^{S} = \hat{d}_{*} \cap s^{S}$ and $s_{*}^{\Sigma} = (\hat{d}_{*} \cap s^{\Sigma}) \cup \delta_{*}$ for $* \in \{ +,-\}$.  All other spheres in $S_{i+1}$ are also in $S_{i} - \{s\}$ and as such the portions from $S$ and $\Sigma$ remain unchanged.  See Figure~\ref{fig:portions}.  

\begin{figure}[ht]
\begin{tikzpicture}
\node[inner sep=0pt] (c0) at (80:5 and 2) {};
\node[inner sep=0pt] (c1) at (-80:5 and 2) {};
\draw[thick,dashed,fill=red!30] (c0) to[out=-107,in=107] (c1) to[out=73,in=-73] (c0);
\draw[thick] (c0) to[out=-107,in=107] (c1);
\node[above] at (c0) {$\alpha_+$};
\path (c0) -- (c1) node[pos=0.475] {$\delta_+$};
\node[inner sep=0pt] (d0) at (100:5 and 2) {};
\node[inner sep=0pt] (d1) at (-100:5 and 2) {};
\draw[thick,dashed,fill=red!30] (d0) to[out=-107,in=107] (d1) to[out=73,in=-73] (d0);
\draw[thick] (d0) to[out=-107,in=107] (d1);
\node[above] at (d0) {$\alpha_{-}$};
\path (d0) -- (d1) node[pos=0.475] {$\delta_{-}$};
\draw[thick,fill=red!30] (3.8,0) ellipse (0.45 and 0.75) node {$\delta_5$};
\draw[thick,fill=red!30] (2.3,0) ellipse (0.45 and 0.75) node {$\delta_4$};
\draw[thick,fill=red!30] (-4,0) ellipse (0.35 and 0.75) node {$\delta_1$};
\draw[thick,fill=red!30] (-3,0) ellipse (0.35 and 0.75) node {$\delta_2$};
\draw[thick,fill=red!30] (-2,0) ellipse (0.35 and 0.75) node {$\delta_3$};
\draw[|-|] (0.85,-2.2) -- (5,-2.2) node[pos=0.5,below] {$s_{+}$};
\draw[|-|] (-0.85,-2.2) -- (-5,-2.2) node[pos=0.5,below] {$s_{-}$};
\draw[very thick,green] (0,0) ellipse (5 and 2);
\node at (5.3,0) {$s$};
\end{tikzpicture}
\caption{An illustration showing a decomposition of $s \in S_{i}$ and the resulting spheres $s_{-}, s_{+} \in S_{i+1}$ into their portions from $S$ and $\Sigma$.  In this example $s^{\Sigma} = \{ \delta_{1},\delta_{2},\delta_{3},\delta_{4},\delta_{5} \}$, $s_{-}^{\Sigma} = \{\delta_{1},\delta_{2},\delta_{3},\delta_{-} \}$ and $s_{-}^{\Sigma} = \{\delta_{4},\delta_{5},\delta_{+} \}$.  The portions from $S$ are the complements in the respective spheres.}\label{fig:portions}
\end{figure}
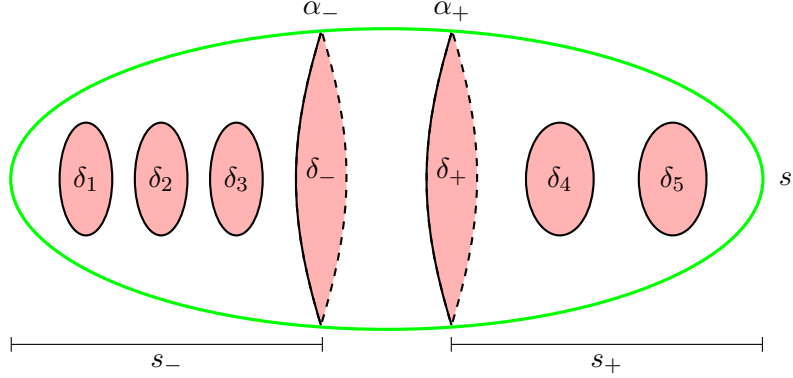

\begin{lemma}\label{Lem:Connected}
Suppose that $S = S_1, \ldots , S_m$ is a surgery sequence from $S$ to $\Sigma$.  Then for every $s \in S_i$, the subset $s^S$ is connected.
\end{lemma}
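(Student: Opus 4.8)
The plan is to argue by induction on the index $i$, tracking the portions $s^S$ and $s^\Sigma$ as recorded in the discussion preceding the lemma. For the base case $i=1$ one has $S_1 = S$ and $s^S = s$ for every $s \in S$, which is connected. For the inductive step I would assume the statement for $S_i$ and analyze the surgery producing $S_{i+1}$ from $S_i$. All spheres of $S_{i+1}$ except the two newly created spheres $s_\pm = \hat d_\pm \cup \delta_\pm$ already appear in $S_i-\{s\}$ with their portions from $S$ unchanged, so these are handled immediately by the inductive hypothesis; the entire content is to verify that $s_+^S$ and $s_-^S$ are connected.

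For the two new spheres I would use the two facts recorded just before the lemma: that $s = \hat d_- \cup A \cup \hat d_+$ with the annulus $A$ chosen narrow enough to lie inside $s^S$, and that $s_*^S = \hat d_* \cap s^S$ for $* \in \{+,-\}$. The key geometric step is that $A$, being an embedded annulus in the sphere $s$, separates $s$ into the two disks $\hat d_+$ and $\hat d_-$, while at the same time $A \subseteq s^S$. Since $s^S$ is connected by the inductive hypothesis and is a compact subsurface of the $2$--sphere $s$, its complement $s - s^S$ is a disjoint union of open disks. Each of these complementary disks is disjoint from $A$ and connected, so it lies in the interior of one of $\hat d_+$ or $\hat d_-$; in particular none of them meets the separating curve $\partial \hat d_*$. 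It then follows that $s_*^S = \hat d_* - (s - s^S)$ is the closed disk $\hat d_*$ with finitely many disjoint open subdisks deleted from its interior, which is connected, completing the induction.

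The step I expect to be the main obstacle is pinning down that the complementary regions of $s^S$ do not straddle the curve $\partial \hat d_*$, so that intersecting with $\hat d_*$ cannot disconnect $s^S$. This is precisely where the narrowness of $A$ is essential: it guarantees $\partial \hat d_* \subset A \subseteq s^S$, forcing every component of $s - s^S$ to sit entirely on one side of $A$. Once this is established, the remaining input is only the elementary topological fact that deleting disjoint interior open disks from a disk yields a connected planar surface, together with the standard observation that a connected compact subsurface of $S^2$ has disk complementary regions (an Euler characteristic count excludes annular ones).
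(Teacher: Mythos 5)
Your proof is correct, and it runs the induction with a different invariant than the paper does. The paper's proof is a two\-/line induction on the dual statement: it tracks that $s^{\Sigma}$ is a finite union of disjoint disks, each parallel rel $s$ to a disk in $\Sigma$. This is immediate from the recursion $s_{*}^{\Sigma} = (\hat{d}_{*} \cap s^{\Sigma}) \cup \delta_{*}$, since $\partial \hat{d}_{*}$ misses $s^{\Sigma}$, so each disk of $s^{\Sigma}$ passes intact to $\hat{d}_{+}$, to $\hat{d}_{-}$, or is discarded, and $\delta_{*}$ is a new disjoint disk; then $s^{S}$ is a sphere minus finitely many disjoint open disks, hence connected, with no further work. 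You instead carry only the weaker hypothesis that $s^{S}$ is connected and recover ``the complementary components are disks'' from planarity of $s$ (your Euler characteristic count, or directly: an annular complementary component would force the connected set $s^{S}$ to meet both of the two disjoint disks cut off by its boundary circles). Both arguments rest on the same elementary fact about disks in $S^{2}$; the paper's strengthened inductive hypothesis makes the step trivial, while yours trades that for the extra planar\-/topology lemma plus the separation argument along $\partial \hat{d}_{*}$. Your version has the mild advantage of not needing to verify the structure of $s^{\Sigma}$ at all, at the cost of a longer step.

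One inaccuracy to fix: you assert $A \subseteq s^{S}$. The paper arranges only that the two thin annuli witnessing the parallelism of $\partial \delta_{\pm}$ with $\alpha_{\kappa}$ and $\alpha_{\lambda}$ lie in $s^{S}$; the middle of $A$ consists of the sub\-/annuli $A_{i}$, whose interiors are disjoint from $\Sigma$ and may therefore contain disks of $s^{\Sigma}$ created by earlier surgeries, so $A \subseteq s^{S}$ can fail. Fortunately your argument uses only the consequence $\partial \hat{d}_{\pm} \subset s^{S}$, which the narrow collars do provide: every component of $s - s^{S}$ then lies in $\Int(\hat{d}_{+})$, $\Int(\hat{d}_{-})$, or $\Int(A)$, and those in $\Int(A)$ are simply discarded by the surgery, while $s_{*}^{S} = \hat{d}_{*} \cap s^{S}$ is $\hat{d}_{*}$ minus disjoint open disks in its interior, as you conclude. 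So the proof survives, but you should weaken the claim to $\partial \hat{d}_{\pm} \subset s^{S}$ and allow complementary disks inside $\Int(A)$.
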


\begin{proof}
Using induction, we can conclude that the subset $s^{\Sigma}$ is a union of disks, each parallel rel $s$ to a disk in $\Sigma$.  Hence, $s^{S}$ is the complement of finitely many disks in $s$ and therefore connected.
\end{proof}

We remark that parallel spheres in $S_{i}$ may have different histories, that is, $s_{1},s_{2} \in S_{i}$ may be parallel even though $s_{1}^{S}$ and $s_{2}^{S}$ are not parallel.  For a surgery sequence $S = S_{1},\ldots,S_{m}$ from $S$ to $\Sigma$ we set
\begin{equation*}
S_{i}^{S} = \bigcup_{s \in S_{i}} s_{i}^{S} \mbox{ and } S_{i}^{\Sigma} = \bigcup_{s \in S_{i}} s_{i}^{\Sigma}.
\end{equation*}

\subsection{Rips moves and surgery steps}\label{subsec:rips}

Suppose that $S$ and $\Sigma$ are filling sphere systems and assume that they are in normal form.  Let $S = S_{1},S_{2},\ldots,S_{m}$ be a surgery sequence from $S$ to $\Sigma$ and let $\Sigma = \Sigma_{1},\Sigma_{2},\ldots,\Sigma_{\mu}$ be a surgery sequence from $\Sigma$ to $S$.  We describe $\Core(S_i, \Sigma_j)$ as (in some appropriate sense) an intersection of $\Core(S_i, \Sigma)$ and $\Core(S, \Sigma_j)$. We start by giving an embedding of $\Core(S_i, \Sigma)$ and $\Core(S, \Sigma_j)$ into $\Core(S, \Sigma)$.  This embedding is constructed inductively.  A single step in the construction is reminiscent to one of the elementary moves of the Rips machine~\cite{ar:BF95,ar:CH14}.  We start with a definition.

\begin{definition}\label{def:rips}
Suppose that $S$ and $\Sigma$ are filling sphere systems in $M$. We define $\partial_S$,
the $S$--boundary of $\Core(S,\Sigma)$, to be the subset of $\Core(S,\Sigma)$ 
consisting of the (open) edges $(P ,\sigma)$ that are the face of exactly one square 
or vertices $(P, \Pi)$ that are the vertex of exactly 3 edges of the form $(P ,\sigma)$, 
$(P ,\sigma')$ and $(s,\Pi)$. A connected component of $\partial_S$ is called 
an \emph{$S$--sides} of $\Core(S,\Sigma)$. Similarly, we define $\partial_\Sigma$,
the $\Sigma$--boundary of $\Core(S,\Sigma)$, to be the subset of $\Core(S,\Sigma)$ 
consisting of the (open) edges $(s ,\Pi)$ that are the face of exactly one square 
or vertices $(P, \Pi)$ that are the vertex of exactly 3 edges of the form $(s ,\Pi)$, 
$(s' ,\Pi)$ and $(P, \sigma)$. A connected component of $\partial_\Sigma$ is called 
an \emph{$\Sigma$--sides} of $\Core(S,\Sigma)$. 

The union of an $S$--side with the set of the (open) squares that have a face 
contained in that side is called a \emph{maximal $S$--boundary rectangle}. 
That is, in a $S$--maximal boundary rectangle, all of the squares are of the form 
$s_{0} \times \sigma$ for some fixed $s_{0} \in S$. A \emph{$\Sigma$--maximal boundary 
rectangle} is similarly defined from a connected component of the $\Sigma$--side.  
A \emph{Rips move} on $(S,\Sigma)$ is the removal of the $\F_{n}$--orbit of a 
($S$-- or $\Sigma$--)maximal boundary rectangle. 

If $R$ is a maximal boundary rectangle in $\Core(S,\Sigma)$, we let $\Core(S,\Sigma)_{R}$ denote the result of the associated Rips move.  We like to think of the removal of the maximal boundary rectangle as collapsing the rectangle by pushing across the adjacent squares.   
\end{definition}

We postpone presenting an example until after the following theorem. 

\begin{theorem}\label{thm:rips=surgery}
Suppose that $S$ and $\Sigma$ are filling sphere systems in $M$ and let $\hat{S}$ be the result of a surgery on $S$ with respect to $\Sigma$.  There is a $S$--maximal boundary rectangle $R \subseteq \Core(S,\Sigma)$ so that $\Core(S,\Sigma)_{R}$ is isomorphic to $\Core(\hat{S},\Sigma)$.  Moreover, for each $S$--maximal boundary rectangle, $R$, there is a sphere $\sigma \in \Sigma$ and innermost disk on $\sigma$ that defines a surgery $S \mapsto \hat{S}$ so that $\Core(\hat{S},\Sigma)$ is isomorphic to $\Core(S,\Sigma)_{R}$. 
%
\end{theorem}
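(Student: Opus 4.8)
The plan is to establish a bijective correspondence between the $S$--maximal boundary rectangles of $\Core(S,\Sigma)$ and the innermost-disk surgery data on $S$, and then to verify that removing a rectangle via a Rips move produces exactly the core of the surgered system. The key conceptual link will be \thmref{th:core to core} together with \propref{prop:shadow = hull}: the horizontal slices $C_\sigma = \{s \times \sigma \mid s \in S_\sigma\}$ are governed by $\Shadow(\sigma)$, so the combinatorics of boundary rectangles should be readable directly off the intersection pattern of spheres. First I would unwind the definition of an $S$--maximal boundary rectangle: by Definition~\ref{def:rips} such a rectangle consists of squares $s_0 \times \sigma$ for a \emph{fixed} sphere $s_0 \in S$, so the rectangle is determined by a single sphere $s_0$ and the set of spheres $\sigma \in \Sigma$ that $s_0$ meets \emph{and} that sit at the boundary of the core along the $s_0$ direction. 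I expect to show that this boundary data corresponds precisely to an outermost/innermost configuration on $s_0$, namely a circle $\alpha$ of $s_0 \cap \Sigma$ bounding an innermost disk $\delta$ on some $\sigma$, which is exactly the surgery data described in Section~\ref{subsection:surgery}.

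The main technical step will be to translate the Rips move on the square complex into the surgery operation on spheres and to confirm the two resulting cores agree. Here I would use the explicit description of $\Core(S,\Sigma)$ in Definition~\ref{def:sphere core}: vertices are components of $M - (S \cup \Sigma)$, edges are pieces, and squares are intersections $s \times \sigma$. When we surger $S$ to $\hat{S}$, the sphere $s$ is replaced by $s_+ = \hat{d}_+ \cup \delta_+$ and $s_- = \hat{d}_- \cup \delta_-$; the portion $\hat{s}^\Sigma = \delta_\pm$ is parallel rel $s$ to a disk in $\Sigma$, so it contributes no new transverse intersections with $\Sigma$ (by \lemref{lem:normal after surgery}, $\hat S$ and $\Sigma$ are again in normal form). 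I would argue that the effect on the intersection pattern is to delete exactly the intersection circles $\alpha_k, \ldots, \alpha_\ell$ that the surgery pushes off $s$, and correspondingly to delete the column of squares $\{s \times \sigma_i\}$ lying at the $S$--boundary — which is precisely the maximal boundary rectangle $R$. The components of $M - (\hat S \cup \Sigma)$ that survive are then in natural bijection with those of $M - (S \cup \Sigma)$ minus the vertices swallowed into the collapse, matching the description of $\Core(S,\Sigma)_R$ as collapsing across the adjacent squares.

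For the forward direction I would start from a given $S$--maximal boundary rectangle $R$ built from a fixed $s_0$, read off the innermost disk $\delta$ on the relevant $\sigma$ whose boundary circle $\alpha$ cuts $s_0$, and declare the surgery $S \mapsto \hat S$ to be the one defined by this innermost disk as in Section~\ref{subsection:surgery}. I would then verify that $\Core(\hat S, \Sigma) \cong \Core(S,\Sigma)_R$ by comparing horizontal slices: for every $\sigma' \neq \sigma$ the shadow $\Shadow(\sigma')$ is unaffected (the surgery only removes the circles in the family $\{\alpha_i\}$ on $s_0$), while $\Shadow(\sigma)$ loses exactly the edge corresponding to $s_0$, which is the effect recorded by the Rips move on $\CH_\eta$ via \propref{prop:shadow = hull}. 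The converse direction, producing $R$ from a surgery, is the reverse reading of the same dictionary.

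The hard part will be the bookkeeping that an $S$--maximal boundary rectangle is genuinely \emph{single-sphere} data, i.e.\ that a boundary edge $(P,\sigma)$ faces exactly one square and that following it around gives squares all of the form $s_0 \times \sigma_i$ for one $s_0$ — and dually that this is exactly the ``innermost/outermost on $s_0$'' condition controlling which disks can be surgered. Concretely, the delicate point is matching the \emph{maximality} of the disk family $\{\delta_i\}_{i=\kappa}^{\lambda}$ (the parallel-rel-$s$ clustering) with the connectedness of the corresponding $S$--side: the parallel innermost disks removed in one surgery step are precisely the squares in a single connected boundary component, so that one Rips move corresponds to one surgery step rather than several. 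I anticipate needing \lemref{lem:intersection of components} and \lemref{Lem:Connected} to control how pieces reorganize after surgery and to rule out spurious components, and \lemref{lem:normal after surgery} to ensure the whole argument stays within the normal-form hypotheses under which all three core constructions are defined.
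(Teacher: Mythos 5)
Your overall strategy is the same as the paper's: identify the $S$--side produced by a maximal family of parallel innermost disks, show that the Rips move deletes exactly the column of squares over $s_0$, and run the dictionary backwards for the converse; your matching of the maximality of $\{\delta_i\}_{i=\kappa}^{\lambda}$ with the connectedness of the $S$--side is precisely how the paper proves the side condition (the edge $(P_+,\sigma_i)$ faces only $s_0\times\sigma_i$ because $P_+\cap\sigma_i=\delta_i$ is a disk, and the vertex $(P_+,\Pi_i)$ has exactly three edges because $P_+\cap\Pi_i$ is bounded by $\delta_i\cup A_i\cup\delta_{i+1}$). However, your slice-comparison step is wrong as written. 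A maximal $S$--boundary rectangle is single-sphere data only on the $S$ side: it consists of squares $s_0\times\sigma_\kappa,\ldots,s_0\times\sigma_\lambda$ over \emph{several distinct} spheres of $\Sigma$ (distinct by normal form, since each meets $s_0$ in at most one circle), one per member of the parallel disk family. So the claim that ``for every $\sigma'\neq\sigma$ the shadow $\Shadow(\sigma')$ is unaffected while $\Shadow(\sigma)$ loses exactly the edge of $s_0$'' fails twice: every $\Shadow(\sigma_i)$ for $\kappa\leq i\leq\lambda$ loses that edge, and every other shadow through the edge $e$ of $s_0$ is also modified, because the surgery splits $e$ into two edges carried by $s_\pm$ --- the circles $\alpha_i$ with $i\notin[\kappa,\lambda]$ survive on $s_+$ or $s_-$ (compare Example~\ref{ex:rips}, where the slice over $b$ becomes slices over $b_1$ and $b_2$). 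Relatedly, the circles deleted by one surgery step are $\alpha_\kappa,\ldots,\alpha_\lambda$, not the larger family $\alpha_k,\ldots,\alpha_\ell$ named in your second paragraph; your final paragraph identifies the correct family, but the two passages are inconsistent.

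More substantively, comparing squares slice-by-slice via \propref{prop:shadow = hull} can at best match the \emph{sets of squares} of the two cores, whereas the theorem asserts an isomorphism of square complexes, and the vertex and edge structure changes nontrivially under surgery: $P_+$ is cut by the 3--ball $B$ into $P_+^+$ and $P_+^-$, while $\hat{P}_-$ absorbs a neighborhood of $s_0$ and of $B$, so the vertices $(P_+,\Pi_i)$ disappear and adjacencies are reglued. The paper's proof does exactly the work your phrase ``natural bijection \ldots minus the vertices swallowed into the collapse'' glosses over: it builds an explicit $\F_n$--equivariant injection $\Core(\hat{S},\Sigma)\hookrightarrow\Core(S,\Sigma)$ skeleton by skeleton, induced by the component map $\iota$ (with $P_+^{\pm}\mapsto P_+$, $\hat{P}_-\mapsto P_-$) and the sphere map $\epsilon$ (with $s_\pm\mapsto s_0$), and the well-definedness checks are where the geometry enters --- one needs that no component of $M-\Sigma$ is contained in $B$ (so $\hat{P}_-\cap\Pi\neq\emptyset$ forces $P_-\cap\Pi\neq\emptyset$) and that all intersections of $s_\pm$ with $\Sigma$ lie in the portions $\hat{d}_\pm$ from $S$, after which the image is visibly $\Core(S,\Sigma)_R$. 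To complete your argument you would need to supply this vertex-and-edge level map (or an equivalent); once that is in place, your dictionary between sides and maximal parallel disk families is correct and yields both directions of the theorem, just as in the paper.
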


\begin{proof}
Assume $\hat{S}$ is obtained from $S$ by a surgery on a sphere $s_{0} \in S$ and a disk $\delta$ that is part of the sphere system $\Sigma$, whose boundary 
$\alpha$ lies on $s$ and is otherwise disjoint from $S$.  By Lemma~\ref{lem:normal after surgery} $\hat{S}$ and $\Sigma$ are in normal form and so we can use the combinatorics of $\hat{S}$ and $\Sigma$ to build $\Core(\hat{S},\Sigma)$.  

We make use of the notation introduced in Section~\ref{subsection:surgery}.  Let $\{\delta_{i}\}_{i=\kappa}^{\lambda}$ be the maximal family of disks in $\Sigma$ parallel rel $s$ where $\delta_{0} = \delta$.  Let $A$ be the union of the annuli $A_{i} \subset s_{0}$ and $d_{+}$, $d_{-}$ the components of $s_{0} - A$.  Thus $\delta_{\kappa} \cup A \cup \delta_{\lambda}$ bounds a 3--ball $B$.  The two spheres obtained by surgery of $s$ using this family, $s_{+}$ and $s_{-}$, are parallel to $d_{+} \cup \delta_{\lambda}$ and $d_{-} \cup \delta_{\kappa}$ respectively.

Let $P_{+} \in M - S$ be the component that contains the interior of $B$ and let $P_{-}$ be the other component with $s$ as a boundary.  Each disk $\delta_{i}$ is contained in some sphere $\sigma_{i} \in \Sigma$.  For each $\kappa \leq i < \lambda$, there are components $\Pi_{i} \subset M - \Sigma$ such that both $\sigma_{i}, \sigma_{i+1} \subset \partial \Pi_{i}$.  We claim that the collections of edges and vertices:  
\[ (P_{+},\sigma_{\kappa}), (P_{+},\Pi_{\kappa}), (P_{+},\sigma_{\kappa+1}), \ldots, (P_{+},\sigma_{\lambda})\]
is a side.  Indeed, each edge $(P_{+},\sigma_{i})$ is the face of only $s \times \sigma_{i}$ and each vertex $(P_{+},\Pi_{i})$ is only also adjacent to $(s_{0},\Pi_{i})$.  The first of these observations is due to the fact that $P_{+} \cap \sigma_{i} = \delta_{i}$ is a disk; the second observation due to the fact that $P_{+} \cap \Pi_{i}$ is bounded by $\delta_{i} \cup A_{i} \cup \delta_{i+1}$.  Maximality of this collection follows from maximality of the collection $\{\delta_{i}\}_{i=\kappa}^{\lambda}$.  

Let $R$ be the corresponding maximal boundary rectangle of $\Core(S,\Sigma)$.  We will show that $\Core(\hat{S},\Sigma)$ is isomorphic to $\Core(S,\Sigma)_{R}$.  To do so, we will construct an injection of square complexes $\Core(\hat{S},\Sigma) \hookrightarrow \Core(S,\Sigma)$ whose image is $\Core(S,\Sigma)_{R}$.
 
Components in $M - S$ that are not in the orbit of $P_{+}$ and $P_{-}$ are also components of $M - \hat{S}$.  But $M - \hat{S}$ has 3 other components; $\hat{P}_{-}$ which is obtained from $P_-$ by adding a neighborhood of $s$ and a neighborhood of the 3--ball $B$ bounded by $\delta_{\kappa} \cup A \cup \delta_{\lambda}$, $P_+^+$ and $P_+^-$ which are contained in the two components of $P_{+} - B$.  In other words, we have: 
\[
M - \hat{S} = \bigl((M - S) - \F_{n}\{P_{+},P_{-}\}\bigr) \cup  \F_{n}\{ \hat{P}_-, P_+^+, P_+^- \big\}. 
\]
There is an $\F_{n}$--equivariant map $\iota \from M - \hat{S} \to M - S$ defined by $P_{+}^{+},P_{+}^{-} \mapsto P_{+}$, $\hat{P}_{-} \mapsto P_{-}$ and the identity on the other orbits.  Also, there is a $\F_{n}$--equivariant map $\epsilon \from \hat{S} \to S$ defined by $s_{+},s_{-} \mapsto s_{0}$ and the identity on the other orbits.      

Using $\iota$, we get a map on the 0--skeleton of $\Core(\hat{S},\Sigma)$ defined by $(P,\Pi) \mapsto (\iota(P),\Pi)$.  In order for this to be well-defined, we need to know that if $P \cap \Pi \neq \emptyset$, then $\iota(P) \cap \Pi \neq \emptyset$ also.  If $P$ is not in the orbit of $\hat{P}_{-}$, then this follows as $P \cap \Pi \subseteq \iota(P) \cap \Pi$.  Finally, since $\hat{P}_{-} = P_{-} \cup B$ and no component of $M - \Sigma$ is contained in $B$, any component of $M - \Sigma$ that intersects $\hat{P}_{-}$ necessarily intersects $P_{-}$ as well.

We extend over the $1$--skeleton using $\epsilon$: $(s,\Pi) \mapsto (\epsilon(s),\Pi)$.  This map is well-defined since any intersection between $s_{+}$, or $s_{-}$, with a component of $M - \Sigma$ is contained in the portion of $s_{+}$, or $s_{-}$ respectively, from $S$, i.e., $d_{+}$, or $d_{-}$ respectively.  Notice that this is consistent with the mapping on the $0$--skeleton.  The edge $(s_{+},\Pi)$ in $\Core(\hat{S},\Sigma)$ is sent to $(s_{0},\Pi)$.  The vertices of $(s_{0},\Pi)$ are $(P_{+}^{+},\Pi)$ and $(\hat{P}_{-},\Pi)$, which are the images of $(P_{+}^{+},\Pi)$ and $(\hat{P}_{-},\Pi)$.  Other verifications are similar.

Finally, we extend over the $2$--skeleton: $s \times \sigma \mapsto \epsilon(s) \times \sigma$.  Since any intersection of $s_{+}$ or $s_{-}$ with $\Sigma$ is contained in the portion from $S$, this map is well-defined.  Again, the map on the $2$--skeleton is consistent with the maps on  $1$--skeleton and $0$--skeleton by construction.  
  
The map $\Core(\hat{S},\Sigma) \to \Core(S,\Sigma)$ is $\F_{n}$--equivariant and preserves the square structure.   The map is not surjective as no $2$--cell is mapped to the squares associated with $s \times \sigma_{i}$, i.e., the image of the map is exactly $\Core(S,\Sigma)_{R}$.   
 
The converse is similar, if $s_{0} \times \sigma_{1},\ldots, s_{0} \times \sigma_{\lambda}$ forms an $S$--maximal boundary rectangle $R$, then one shows that there are disks $\delta_{i} \subset \sigma_{i}$ that are parallel rel $s_{0}$ and that surgery using the family $\{\delta_{i}\}_{i=1}^{\lambda}$ results in the sphere system $\hat{S}$ where $\Core(\hat{S},\Sigma)$ is isomorphic to $\Core(S,\Sigma)_{R}$.
\end{proof}

\begin{example}\label{ex:rips}
Here we describe a Rips move and the corresponding surgery explicitly in an example.  
Consider a sphere $s \in S$ associated to the edge $b$ in the dual graph $G$. In
the example depicted in Figure~\ref{fig:rips}, $s$ intersects $7$ spheres
in $\Sigma$; spheres $\sigma_0, \ldots, \sigma_6$ associated to edges
$\eta_0, \ldots, \eta_6$ in $\Gamma$. We denote the intersection circle between 
$s$ and $\sigma_i$ by $\alpha_i$. The slice over $s$ in $\Core(S, \Sigma)$ 
consists of squares associated to intersection circles between $s$ and $\Sigma$
that is, 
\[
C_{s} = \{ s \times \sigma_i \mid i = 0, \ldots, 6 \}.
\]  
In the language of trees, $C_s$ is associated to the slice over
$b$, which is 
\[
b \times \Shadow(s) = \{ b \times \eta_i \mid i = 0, \ldots, 6 \}
  \subset G \times \Gamma.
\]

There are two components of $M-S$ that have $s$ as their boundary sphere.
In this example, the component $P_l$, which we call left, has 3 other boundary 
spheres (associated to edges $a_1$, $a_2$ and $a_3$) and the component $P_r$ 
on the right has two other boundary spheres (associated to edges $c_1$ and $c_2$). 
 
\begin{figure} [ht]
\includegraphics{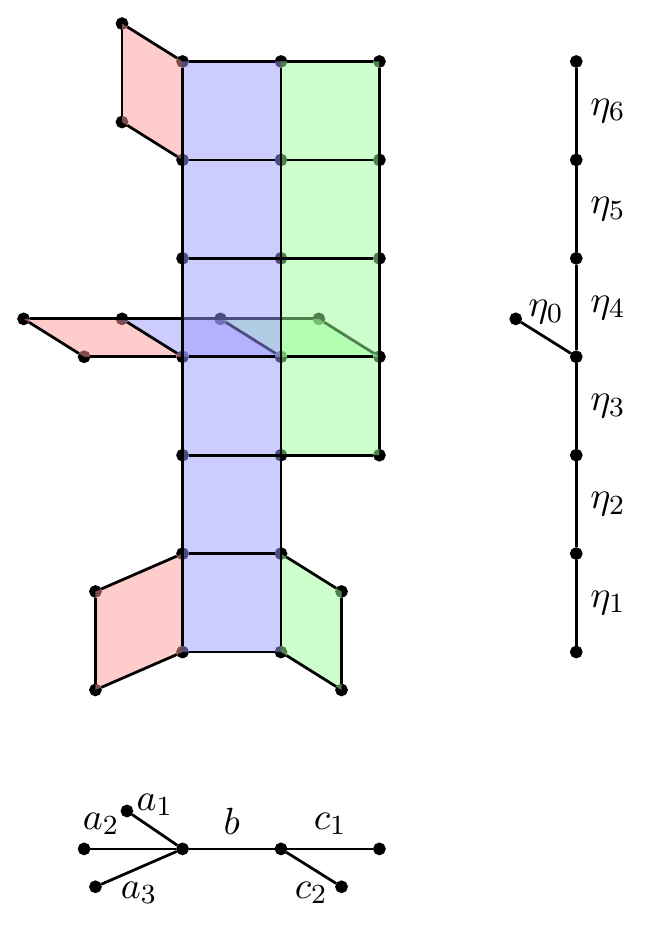}
\includegraphics{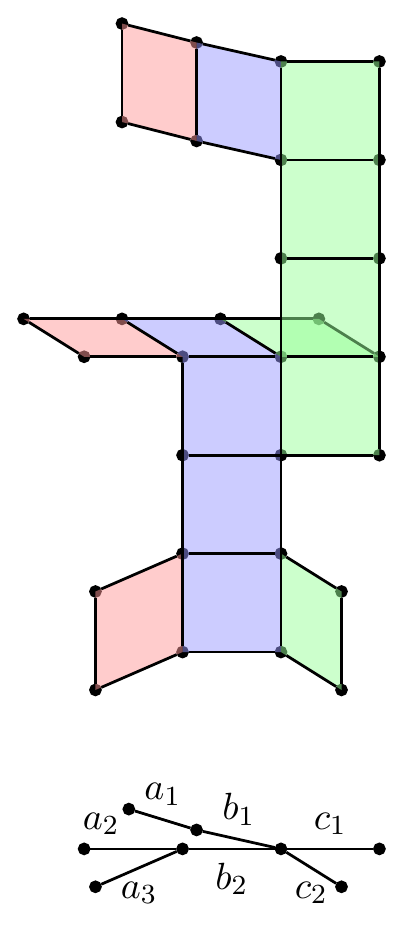}
\caption{The left hand side depicts the slice $C_s$ and squares attached to it 
in $\Core(S, \Sigma)$. Consider the maximal $S$--boundary rectangle 
$R= b \times (\eta_4 \cup \eta_5)$. 
A Rips move along $R$ is associated to a surgery on the sphere $s$
or a splitting of the edge $b$ in the graph $G$. The right side depicts the 
associated portion of $\Core(S, \Sigma)_R=\Core(S',R)$. }
\label{fig:rips}
\end{figure}

Note that Figure~\ref{fig:rips} indicates that the sphere $\sigma_1$ intersects spheres 
in $S$ associated to edges $a_1$, $b$ and $c_2$ since the core contains squares
$a_1 \times \eta_1$, $b \times \eta_1$ and $c_2 \times \eta_1$. However, the
sphere $\sigma_2$ does not intersect spheres associated to edges $a_1$, $a_2$ and
$a_3$. But, $\sigma_2$ intersects $s$, hence, the circle $\alpha_2$ must bound a disk 
$\delta_2$ that is the intersection of $\sigma_2$ with $P_l$. 
Similarly, circles $\alpha_3$, $\alpha_4$ and $\alpha_5$ bound disks
$\delta_3$, $\delta_4$ and $\delta_5$ that are, respectively, intersections of 
spheres $\sigma_3$, $\sigma_4$ and $\sigma_5$ with $P_l$,  
(thus the squares $b \times \eta_i$, $i=2, \ldots, 5$, have boundary edges on their 
left side).  The circle $\alpha_2$ also bounds a disk in $\sigma_2$ in $P_r$ 
(thus the square $b \times \eta_2$ has a boundary edge on its right side). 

The disks $\delta_2$ and $\delta_3$ are parallel and the disks $\delta_4$ and $\delta_5$ 
are also parallel, however, the two sets of disks are not parallel to each other
(see Figure~\ref{Fig:circles}). Thus, there are two maximal boundary rectangles 
from the left; $R= b \times(\eta_2 \cup \eta_3)$ and $R'=b \times (\eta_4 \cup \eta_5)$. 
More precisely, let $\Pi$ be the component of $M -\Sigma$
with $\sigma_0$, $\sigma_3$ and $\sigma_4$ as its boundary spheres. 
Then, referring to Definition~\ref{def:rips}, we see that the vertex
$(P_l,\Pi)$ is not in the $S$--boundary of $\Core(S, \Sigma)$ because it is the vertex of 
$5$ different edges. Hence, the union of $R$ and
$R'$ is not a boundary rectangle.

\begin{figure} [ht]
\includegraphics{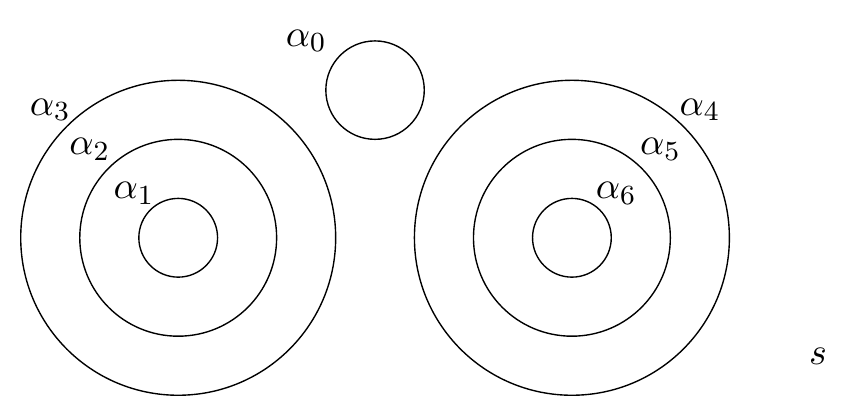}
\caption{The curves $\alpha_0, \ldots, \alpha_6$ are intersection circles between 
the sphere $s$ and the spheres $\sigma_0, \ldots, \sigma_6$ respectively. 
The circles $\alpha_2$, $\alpha_3$, $\alpha_4$ and $\alpha_5$
bound a disk in $P_l$. However, the circle $\alpha_3$ and $\alpha_4$ are not parallel.}
\label{Fig:circles}
\end{figure}

Define $S'$ to be the sphere system obtained from $S$ by applying the surgery on the set 
of parallel disks $\{ \delta_4, \delta_5 \}$ (and their $\F_n$--orbits). The surgery results in 
two spheres $s_1$ and $s_2$ associated 
to edges  $b_1$ and $b_2$ and the removal of the maximal boundary rectangle $R$.  
It appears that removal of this rectangle makes the slice over $b$ disconnected. 
However, the two components are slices over the edges $b_1$ and $b_2$. 

To summarize, the surgery along the disks $\{ \delta_4, \delta_5\}$, changes $G$ by splitting the edge $b$ and changes $\Core(S, \Sigma)$ by removing 
the maximal boundary rectangle $R = b \times (\eta_4 \cup \eta_5)$, resulting in $\Core(S,\Sigma)_{R} \cong \Core(S',\Sigma)$. 

A different splitting of $b$ into $b_{1}$ and $b_{2}$ partitioning the $a$ edges into $\{a_{1},a_{3}\}$ and $\{a_{2}\}$ does not arise as a surgery and could potentially increase the volume of the core.
\end{example}

\subsection{The intersection of cores}\label{subsec:intersection}

Applying Theorem~\ref{thm:rips=surgery} to the surgery sequence $S = S_{1},S_{2},\ldots,S_{m}$ we obtain maps for $i = 1,\ldots,m-1$:
\[
k_{i, i+1}: \Core(S_{i+1}, \Sigma) \to \Core(S_i, \Sigma).
\] that are the composition of the isomorphism $\Core(S_{i+1},\Sigma) \cong \Core(S_{i},\Sigma)_{R}$ for the corresponding maximal boundary rectangle and the natural inclusion $\Core(S_{i},\Sigma)_{R} \hookrightarrow \Core(S_{i},\Sigma)$.  By symmetry there are also maps for $j = 1,\mu-1$:
\[\kappa_{j,j+1} \from  \Core(S, \Sigma_{j+1}) \to \Core(S, \Sigma_j).\]  
Since these map exist for all $1 \leq i \leq m-1$, $ 1 \leq j \leq \mu-1$, we can define ``inclusions'' alluded to at the beginning of this section:
\begin{align}
k_i &= k_{1,2}k_{2,3}...k_{i-2, i-1}k_{i-1, i}\from \Core(S_i,\Sigma) \to \Core(S, \Sigma)\label{eq:ki} \\
\kappa_j &= \kappa_{1,2}\kappa_{2,3}...\kappa_{j-2, j-1}\kappa_{j-1,j}\from \Core(S,\Sigma_j) \to \Core(S, \Sigma)\label{eq:kappaj}
\end{align}

\begin{remark}\label{rem:rips}
On the level of squares, the map $k_{i}\from \Core(S_{i},\Sigma) \to \Core(S,\Sigma)$ is easy to describe.  For each $\hat{s} \in S_{i}$, we have that $\hat{s}^{S} \subseteq s$ for a unique $s \in S$.  The map is defined by $\hat{s} \times \sigma \to s \times \sigma$.
\end{remark}

The following is the fundamental concept essential to the proof of the main theorem.


\begin{proposition}\label{Prop:Isomorphism} 
With the above set-up, assume 
\begin{equation} \label{eq:Union} 
k_i\bigl(\Core(S_i, \Sigma)\bigr) \cup \kappa_j\bigl(\Core(S, \Sigma_j)\bigr) 
= \Core(S, \Sigma),
\end{equation}
then $S_i$ and $\Sigma_j$ are in normal form. Furthermore, there exists an  
isomorphism 
\[ 
\Phi \from \Core(S_i, \Sigma_j) \to 
   k_i\bigl(\Core(S_i, \Sigma)\bigr) \cap \kappa_j\bigl(\Core(S, \Sigma_j)\bigr). 
\] 
\end{proposition}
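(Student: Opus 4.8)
The plan is to reduce the hypothesis \eqref{eq:Union} to a statement about which intersection circles survive each surgery sequence, deduce normal form from that, and then read off $\Phi$ square by square. Concretely, I would first record a combinatorial description of the two images inside $\Core(S,\Sigma)$. Every square of $\Core(S,\Sigma)$ has the form $s\times\sigma$ and carries the single intersection circle $\alpha = s\cap\sigma$ (single because $S,\Sigma$ are in normal form). By Remark~\ref{rem:rips} together with the description of the Rips move in Theorem~\ref{thm:rips=surgery}, a square $s\times\sigma$ lies in $k_i\bigl(\Core(S_i,\Sigma)\bigr)$ if and only if $\alpha$ is not consumed by the surgeries $S_1,\dots,S_i$, equivalently $\alpha\subseteq\hat s^{S}$ for some $\hat s\in S_i$; symmetrically $s\times\sigma\in\kappa_j\bigl(\Core(S,\Sigma_j)\bigr)$ if and only if $\alpha\subseteq\hat\sigma^{\Sigma}$ for some $\hat\sigma\in\Sigma_j$. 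With this dictionary \eqref{eq:Union} says exactly that \emph{no circle $\alpha=s\cap\sigma$ is consumed by both sequences}.

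Next I would prove that $S_i$ and $\Sigma_j$ are in normal form; this is also what makes $\Core(S_i,\Sigma_j)$ well defined through Definition~\ref{def:sphere core}. For $\hat s\in S_i$ and $\hat\sigma\in\Sigma_j$ write $\hat s=\hat s^{S}\cup\hat s^{\Sigma}$ and $\hat\sigma=\hat\sigma^{\Sigma}\cup\hat\sigma^{S}$ and expand $\hat s\cap\hat\sigma$ into the four pieces $\hat s^{S}\cap\hat\sigma^{\Sigma}$, $\hat s^{S}\cap\hat\sigma^{S}$, $\hat s^{\Sigma}\cap\hat\sigma^{\Sigma}$ and $\hat s^{\Sigma}\cap\hat\sigma^{S}$. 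After a generic choice of the parallel disks the two pieces $\hat s^{S}\cap\hat\sigma^{S}$ and $\hat s^{\Sigma}\cap\hat\sigma^{\Sigma}$ are empty, since distinct spheres of a sphere system are disjoint in $M$ and every $\Sigma$--portion (resp.\ $S$--portion) may be pushed off $\Sigma$ (resp.\ $S$). The piece $\hat s^{S}\cap\hat\sigma^{\Sigma}$ lies in $s\cap\sigma$, so it is either empty or the circle $\alpha$, and it equals $\alpha$ precisely when $\alpha$ survives in both $S_i$ and $\Sigma_j$. The only remaining piece, $\hat s^{\Sigma}\cap\hat\sigma^{S}$, pairs a disk parallel rel $s$ to an innermost disk $\delta\subset\sigma$ with a disk parallel rel $\sigma$ to an innermost disk $d\subset s$; since $\alpha=\partial\delta=\partial d$ is innermost on both spheres, $d\cup\delta$ bounds a ball disjoint from $S\cup\Sigma$. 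I would show that this piece is nonempty, or that a disk of $\hat s-\hat\sigma$ is parallel into $\hat\sigma$, only when the corresponding circle $\alpha$ is consumed on \emph{both} sides. By the previous paragraph \eqref{eq:Union} forbids this, so the dangerous piece is empty and $\hat s\cap\hat\sigma$ is the single surviving circle $\alpha$ (or empty); the absence of a cancelling disk is then inherited from the normal form of $S,\Sigma$ near $\alpha$. Hence $S_i,\Sigma_j$ are in normal form.

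With normal form in hand, I would construct $\Phi$. Now $\Core(S_i,\Sigma_j)$ is built from Definition~\ref{def:sphere core}; send a square $\hat s\times\hat\sigma$ to $s\times\sigma$, where $s,\sigma$ are the unique spheres with $\hat s^{S}\subseteq s$ and $\hat\sigma^{\Sigma}\subseteq\sigma$, and define $\Phi$ on vertices and edges by collapsing portions exactly as the maps $\iota$ and $\epsilon$ do in the proof of Theorem~\ref{thm:rips=surgery}. By the normal form analysis the intersection circle of $\hat s\times\hat\sigma$ is a circle $\alpha$ surviving in both $S_i$ and $\Sigma_j$, so its image $s\times\sigma$ lies in $k_i\bigl(\Core(S_i,\Sigma)\bigr)\cap\kappa_j\bigl(\Core(S,\Sigma_j)\bigr)$; conversely a circle surviving on both sides determines a unique $\hat s$ and a unique $\hat\sigma$, giving the inverse on squares. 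Checking that this bijection on squares respects the vertex--edge incidence (again via the portion--collapsing maps) promotes it to the desired $\F_n$--equivariant isomorphism of square complexes onto the intersection.

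The main obstacle is the normal form step, and within it the claim about the piece $\hat s^{\Sigma}\cap\hat\sigma^{S}$: that a failure of normal form forces a doubly consumed circle. This is where the genuine $3$--manifold bookkeeping lives---tracking the sides to which the parallel copies $\delta_*$ and $d_*$ are pushed, using innermost-ness to see that $d\cup\delta$ bounds a ball meeting neither system, and reconciling the two conventions for when a circle is \emph{consumed}. Once this is settled, the reformulation in the first paragraph and the incidence check in the third are essentially formal, and $\F_n$--equivariance is automatic since all the surgeries and collapsing maps are equivariant.
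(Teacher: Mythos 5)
Your proposal follows essentially the same route as the paper's proof: your dictionary reformulation of \eqref{eq:Union} is exactly the paper's opening observation that a circle in $S_i^{\Sigma}\cap\Sigma_j^{S}$ would correspond to a square of $\Core(S,\Sigma)$ lying in neither $k_i\bigl(\Core(S_i,\Sigma)\bigr)$ nor $\kappa_j\bigl(\Core(S,\Sigma_j)\bigr)$, your normal-form step uses the connectedness of the portions $\hat{s}^{S}$ and $\hat{\sigma}^{\Sigma}$ (Lemma~\ref{Lem:Connected}) in the same way, and your square-by-square definition of $\Phi$, with injectivity from normal form and surjectivity by tracing a doubly surviving circle back to $S_i\cap\Sigma_j$, is the paper's construction. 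The differences are only in emphasis: you are somewhat more explicit about the four-piece decomposition of $\hat{s}\cap\hat{\sigma}$ and about condition (2) of normal form, both of which the paper treats implicitly.
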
 
 
\begin{proof}
First, we show that every intersection circle between $S_i$ and $\Sigma_j$
is in fact in $S_i^S \cap \Sigma_j^\Sigma$. This is because a square in 
$\Core(S,\Sigma)$ associated to an intersection circle in $S_i^\Sigma \cap \Sigma_j^S$ is neither in $k_i\bigl(\Core(S_i, \Sigma)\bigr)$ ($S_i^\Sigma$ does not intersect $\Sigma$) nor in $\kappa_j\bigl(\Core(S, \Sigma_j)\bigr)$ ($\Sigma_j^S$ does not intersect $S$) and by the assumption \eqref{eq:Union} every square in $\Core(S,\Sigma)$ is in the image of one of these two maps.

This observation implies that $S_i$ and $\Sigma_j$ are in fact in normal form. 
In fact, pick spheres $s_i \in S_i$ and $\sigma_j \in \Sigma_j$. We will show
that $s_i$ and $\sigma_j$ intersect at most once. Otherwise, 
$s_i^S$ and $\sigma_j^\Sigma$ intersect more than once. But, by 
\lemref{Lem:Connected}, $s_i^S$ and $\sigma_j^\Sigma$ are connected, which means 
there is a sphere $s \in S$ that contains $s_i^S$ and a sphere $\sigma \in \Sigma$ that contains $\sigma_j^\Sigma$. Hence, $s$ and $\sigma$ intersect more than once. This contradicts the fact that $S$ and $\Sigma$ are in normal form. 

Now consider a square $s_{i} \times \sigma_{j}$ in $\Core(S_i, \Sigma_j)$ associated to an intersection circle $\alpha$.  Then $\alpha$ is an intersection circle in $S_i^S \cap \Sigma_j^\Sigma$.  Which mean it is an intersection circle in both $S \cap \Sigma_j$ and $S_i \cap \Sigma$ and thus there are spheres $s \in S$, $\sigma \in \Sigma$ for which $s \cap \sigma = \alpha$ and $s_{i}^{S} \subseteq s$, $\sigma_{j}^{\Sigma} \subseteq \sigma$.  Hence, $s \times \sigma$ is 
contained in both $k_i\bigl(\Core(S_i, \Sigma)\bigr)$ and  
$\kappa_j\bigl(\Core(S, \Sigma_j)\bigr)$ and so we define $\Phi(s_{i} \times \sigma_{j}) = s \times \sigma$.  Normal form implies that the map is injective. 

To prove that $\Phi$ is surjective, suppose $s \times \sigma$ is in 
$k_i\bigl(\Core(S_i,\Sigma)\bigr)$.  Then the associated intersection circle in
$S_i^S$.  Similarly, assuming $s \times \sigma$ is in $\kappa_j\bigl(\Core(S, \Sigma_j)\bigr)$ implies that the associated intersection circle in $\Sigma_j^\Sigma$.  Therefore, it also lies in $S_i \cap \Sigma_j$.  Hence there are spheres $s_{i} \in S_{i}$ and $\sigma_{j} \in \Sigma_{j}$ such that $\Phi(s_{i} \times \sigma_{j}) = s \times \sigma$. 
\end{proof}

For future reference, we record the following corollary:

\begin{corollary} \label{Cor:Surgery}
As long as \eqref{eq:Union} is satisfied, $\Core(S_i, \Sigma_{j+1})$ can obtained from $\Core(S_i, \Sigma_j)$ by a Rips move.
\end{corollary}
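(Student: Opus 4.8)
The plan is to realize both $\Core(S_i,\Sigma_j)$ and $\Core(S_i,\Sigma_{j+1})$ as subcomplexes of the ambient core $\Core(S,\Sigma)$ through Proposition~\ref{Prop:Isomorphism}, and then to show that the passage from the first to the second deletes the orbit of a single maximal boundary rectangle. First I would record a monotonicity observation: since $\kappa_{j+1}=\kappa_j\circ\kappa_{j,j+1}$ and $\kappa_{j,j+1}$ is an isomorphism onto a Rips-reduced subcomplex followed by an inclusion, we have $\kappa_{j+1}\bigl(\Core(S,\Sigma_{j+1})\bigr)\subseteq \kappa_j\bigl(\Core(S,\Sigma_j)\bigr)$. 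Hence \eqref{eq:Union} for the pair $(i,j+1)$ forces \eqref{eq:Union} for $(i,j)$, so Proposition~\ref{Prop:Isomorphism} supplies isomorphisms $\Phi_j$ and $\Phi_{j+1}$ onto the intersections
\[
A_j = k_i\bigl(\Core(S_i,\Sigma)\bigr)\cap \kappa_j\bigl(\Core(S,\Sigma_j)\bigr), \qquad
A_{j+1} = k_i\bigl(\Core(S_i,\Sigma)\bigr)\cap \kappa_{j+1}\bigl(\Core(S,\Sigma_{j+1})\bigr).
\]

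Next I would apply Theorem~\ref{thm:rips=surgery} to the surgery $\Sigma_j\mapsto\Sigma_{j+1}$ (a surgery on some $\sigma_0\in\Sigma_j$ with respect to $S$) together with the swap $\Core(\Sigma_j,S)\cong\Core(S,\Sigma_j)$. This produces a $\Sigma_j$--maximal boundary rectangle $R\subseteq\Core(S,\Sigma_j)$, made of squares $s\times\sigma_0$ over the fixed surgered sphere $\sigma_0$, with $\kappa_{j+1}\bigl(\Core(S,\Sigma_{j+1})\bigr)=\kappa_j\bigl(\Core(S,\Sigma_j)\bigr)\setminus \F_n\!\cdot\!\kappa_j(R)$. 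Writing $\hat R=\kappa_j(R)$ and using $\hat R\subseteq\kappa_j(\Core(S,\Sigma_j))$, a direct set computation yields
\[
A_{j+1}=A_j\setminus\bigl(\F_n\!\cdot\!\hat R\cap A_j\bigr),
\]
so that under $\Phi_j$ the complex $\Core(S_i,\Sigma_{j+1})$ is obtained from $\Core(S_i,\Sigma_j)$ by deleting the orbit of $\Phi_j^{-1}(\hat R\cap A_j)$.

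It remains to identify $\Phi_j^{-1}(\hat R\cap A_j)$ as a $\Sigma_j$--maximal boundary rectangle of $\Core(S_i,\Sigma_j)$. Here I would argue circle by circle. A square of $A_j$ sits over an intersection circle $\alpha\in S_i\cap\Sigma_j$, and since the intersection circles of $S_i$ and $\Sigma_j$ all lie in $S_i^S\cap\Sigma_j^\Sigma\subseteq S\cap\Sigma_j$ (from the proof of Proposition~\ref{Prop:Isomorphism}), such an $\alpha$ is also a circle of $S\cap\Sigma_j$. As $\alpha$ lies on a unique sphere of $\Sigma_j$, the squares of $A_j$ landing in $\F_n\cdot\hat R$ are exactly those over circles $\alpha\in S_i\cap\sigma_0$ belonging to the maximal family of parallel innermost disks on $\sigma_0$ that defines $R$. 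In particular $\Phi_j^{-1}(\hat R\cap A_j)$ consists of squares $s_i\times\sigma_0$ for the single fixed sphere $\sigma_0\in\Sigma_j$ — the correct shape for a $\Sigma_j$--maximal boundary rectangle — and I would extract its connectedness, the freeness of the relevant faces, and its maximality from the normal form of $(S_i,\Sigma_j)$ and the description of $R$ via parallel innermost disks in the proof of Theorem~\ref{thm:rips=surgery}.

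I expect the main obstacle to be precisely this last identification. The family of parallel innermost disks on $\sigma_0$ defining $R$ is organized by the full intersection pattern $\sigma_0\cap S$, whereas only the subpattern $\sigma_0\cap S_i$ (a subset of $\sigma_0\cap S$, since $S_i^S\subseteq S$) is visible in $\Core(S_i,\Sigma_j)$; I must verify that restricting to this subpattern neither disconnects the rectangle nor destroys its boundary character, and that the restricted family remains maximal. Two degeneracies also need attention: that the restricted rectangle could be empty (making the transition an identity rather than a genuine Rips move), and that the surviving squares might a priori meet the interior rather than the boundary of $\Core(S_i,\Sigma_j)$. I anticipate that \eqref{eq:Union} for $(i,j+1)$ is exactly what rules these out, since it forces every square over $\sigma_0$ visible in $\Core(S_i,\Sigma_j)$ to be captured by the intersection description $A_{j+1}$.
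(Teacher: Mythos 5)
Your proposal follows essentially the same route as the paper's proof: the paper likewise takes the $\Sigma_j$--maximal boundary rectangle $R$ of squares $s_1\times\hat{\sigma},\ldots,s_\ell\times\hat{\sigma}$ furnished by Theorem~\ref{thm:rips=surgery}, intersects $\kappa_j(R)$ with the slice of squares over the corresponding sphere $\sigma\in\Sigma$ inside $k_i\bigl(\Core(S_i,\Sigma)\bigr)$, and identifies the result via the isomorphism of Proposition~\ref{Prop:Isomorphism} as a maximal $\Sigma_j$--boundary rectangle whose collapse yields $\Core(S_i,\Sigma_{j+1})$. The verifications you flag at the end (maximality, boundary character, non-emptiness of the restricted rectangle) are asserted rather than detailed in the paper as well, and your closing observation that \eqref{eq:Union} at $(i,j+1)$ forces every square of $\kappa_j(R)$ to lie in $k_i\bigl(\Core(S_i,\Sigma)\bigr)$ is exactly the right mechanism for discharging them.
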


\begin{proof}
Let $R$ be a maximal $\Sigma_{j}$--boundary rectangle in $\Core(S,\Sigma_{j})$ such that $\Core(S,\Sigma_{j})_{R} \cong \Core(S,\Sigma_{j+1})$.  Thus $R$ consists of squares $s_{1} \times \hat{\sigma}, \ldots, s_{\ell} \times \hat{\sigma}$ for some $\hat{\sigma} \in \Sigma_{j}$ and $s_{1},\ldots,s_{\ell} \in S$.  Let $\sigma \in \Sigma$ be such that $\hat{\sigma}^{\Sigma} \subseteq \sigma$ and consider the set $C_{\sigma,i}$ of squares of the form $\hat{s} \times \sigma$ in $\Core(S_{i},\Sigma)$.  Then $k_{i}(C_{\sigma,i}) \cap \kappa_{j}(R)$ corresponds via the isomorphism in Proposition~\ref{Prop:Isomorphism} to a maximal $\Sigma_{j}$--boundary rectangle in $\Core(S_{i},\Sigma_{j})$ whose collapse results in $\Core(S_{i},\Sigma_{j+1})$.
\end{proof}

\section{Proof of Theorem~\ref{Thm:Main}}\label{Sec:Proof} 

To finish the proof of the main theorem, we proceed as follows using the set-up from the previous section.  We start with a lemma giving a necessary condition for two sphere systems to be at a bounded distance.  A \emph{free edge} is an edge that does not bound any squares.

\begin{lemma}\label{Lem:Isolated}
If $\Core(S_i, \Sigma_j)$ contains a free edge, then the two sphere systems are of distance at most 2 in the sphere graph.
\end{lemma}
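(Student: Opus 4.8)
The plan is to read off from a free edge a single sphere that is dominated by (isotopic into) both $S_i$ and $\Sigma_j$, and then to use that sphere as the midpoint of a length-two path in the sphere graph.

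First I would translate ``free edge'' into the sphere picture. By Definition~\ref{def:sphere core} every edge of $\Core(S_i,\Sigma_j)$ is a piece, and an $S_i$--piece $(s,\Pi)$ is a face of the square $s\times\sigma$ precisely for each intersection circle $s\cap\sigma$ lying on that piece. Hence an edge bounds no square exactly when the corresponding piece carries no intersection circle; that is, the piece is an \emph{entire} sphere $s$, which I may take to lie in $S_i$ (the case of a $\Sigma_j$--edge being symmetric), sitting inside a single complementary region $\Pi$ of $M-\Sigma_j$ and disjoint from $\Sigma_j$.

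Next I would use that $\Sigma_j$ is filling to recognize $s$ as a sphere of $\Sigma_j$. Since $\Sigma_j$ fills, the region $\Pi$ is simply connected, so its closure is a $3$--sphere with finitely many open balls removed, whose boundary spheres are spheres of $\Sigma_j$. The sphere $s\subset\Pi$ is essential and disjoint from $\partial\Pi$, so it separates the boundary spheres of $\Pi$ into two nonempty groups, and I want to conclude that one group is a single sphere, i.e.\ that $s$ is parallel to some $\sigma\in\Sigma_j$. This is the step I expect to be the main obstacle: I must exclude the possibility that $s$ splits the boundary spheres of $\Pi$ into two groups each of size at least two, which would present $s$ as a genuinely new essential sphere disjoint from $\Sigma_j$ and would only give $d_\calS(S_i,\Sigma_j)\le 3$. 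When $\Sigma_j$ is maximal this is automatic, because each region of $M-\Sigma_j$ is then a thrice--punctured sphere and an essential sphere in it is necessarily boundary parallel. In general I would lean on the filling of $S_i$ as well, through \propref{prop:shadow = hull} and \lemref{lem:morphism}: the edge of $G$ dual to $s$ lies in no shadow $\Shadow(\sigma)$, and the connected--fibre characterization of the Guirardel core underlying Theorem~\ref{th:core to core} forces such an edge to represent a boundary--parallel sphere rather than a new splitting, so that $s$ is isotopic to a sphere $\sigma\in\Sigma_j$.

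Finally I would assemble the path. Having $s\in S_i$ with $s$ isotopic to a sphere of $\Sigma_j$, the one--sphere system $\{s\}$ satisfies $\{s\}\preceq S_i$ and $\{s\}\preceq\Sigma_j$, so both $S_i$ and $\Sigma_j$ are adjacent to $\{s\}$ in the sphere graph. The length--two path $S_i\succeq\{s\}\preceq\Sigma_j$ then yields $d_\calS(S_i,\Sigma_j)\le 2$. The same argument, with the roles of $S_i$ and $\Sigma_j$ interchanged, handles the case in which the free edge is a $\Sigma_j$--edge.
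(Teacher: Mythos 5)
Your first step matches the paper exactly: since edges of $\Core(S_i,\Sigma_j)$ are pieces and squares are intersection circles, a free edge is a piece with no boundary circles, i.e.\ an entire sphere $s$ of one system lying in a single complementary region $\Pi$ of the other system and disjoint from it. The genuine gap is your second step, the claim that $s$ must then be isotopic to a sphere \emph{of} $\Sigma_j$. This is false in general, and none of the results you invoke delivers it. If $\Pi$ has four or more boundary spheres, an essential sphere $s \subset \Pi$ can separate those boundary spheres into two groups each of size at least two; such an $s$ is disjoint from $\Sigma_j$ but parallel to no sphere of $\Sigma_j$, and one can complete $s$ to a filling system $S_i$ whose remaining spheres meet $\Sigma_j$ in normal form, producing a core with a free edge of exactly this kind. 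Your appeal to \propref{prop:shadow = hull} and to the connected-fibre characterization behind Theorem~\ref{th:core to core} cannot exclude this: those statements identify $\Shadow(\sigma)$ with $\CH_\eta$ for spheres that \emph{do} intersect $\sigma$, and for a sphere disjoint from $\Sigma_j$ they say only that its dual edge lies in no hull --- which is precisely consistent with $s$ being a genuinely new splitting sphere. So the sentence ``forces such an edge to represent a boundary-parallel sphere'' is an assertion, not an argument, and the assertion is wrong; you yourself flagged this as the main obstacle, and the maximal-system case where parallelism is automatic is not the case at hand.

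The paper's own proof never claims parallelism and does not need it. It stops at disjointness: the free-edge sphere ``can be added to both sphere systems,'' meaning $\{s\} \preceq S_i$ trivially while $\Sigma_j \cup \{s\}$ is again a sphere system refining $\Sigma_j$, and the distance bound of $2$ is read off from the vertex $\{s\}$, i.e.\ from the compatibility of $s$ with $\Sigma_j$ rather than from a shared isotopy class. Your unease that bare disjointness ``would only give $d_\calS(S_i,\Sigma_j)\le 3$'' identifies the right pressure point --- under a strictly literal reading of the domination adjacency in Definition~\ref{def:sphere} one writes the chain $S_i \succeq \{s\} \preceq \Sigma_j \cup \{s\} \succeq \Sigma_j$ --- but the paper resolves it by counting through the systems containing $s$ (disjointness supplies the middle adjacency), not by upgrading $s$ to a sphere of $\Sigma_j$. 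In short: your conclusion and first reduction agree with the paper, but the bridge you built (boundary-parallelism of the free-edge sphere) is both unprovable from the cited tools and false in general, whereas the paper's bridge is the weaker, correct observation that $s$ can simply be adjoined to $\Sigma_j$.
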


\begin{proof}
Edges in the core are associated to spheres in either sphere system $S_i$ or $\Sigma_j$ and squares are associated to intersection circles between sphere systems. Hence, a free edge in the core is associated to a sphere in either $S_i$ or $\Sigma_j$ that does not intersect any other spheres from the other system. Thus this sphere can be added to both sphere systems.  That is, $\bS_i$ and $\bSigma_j$ have distance 2 in the sphere graph.
\end{proof}

We now prove Theorem~\ref{Thm:Main}.  We restate it for convenience.

\medskip \noindent {\bf Theorem 1.1.~~}{\it Let $\bS$ and $\bSigma$ be two filling sphere systems and let 
\begin{equation*}
\bS= \bS_1, \bS_2, \ldots, \bS_m,  
\qquad  d_{\calS}(\bS_{m},\bSigma) \leq 1
\end{equation*}
be a surgery sequence starting from $\bS$ towards $\bSigma$ and 
\begin{equation*}
\bSigma= \bSigma_1, \bSigma_2, \ldots, \bSigma_{\mu},
\qquad d_{\calS}(\bSigma_{\mu},\bS) \leq 1
\end{equation*}
be a surgery sequence in the opposite direction. Then, for every $\bS_i$ there is a 
$\bSigma_j$ so that $d_\calS(\bS_i, \bSigma_j) \leq 2$.  
}

\begin{proof}
Fix two filling sphere systems $\bS$ and $\bSigma$ and surgery paths as in the statement of the theorem.  For every $\bS_i$ we need to find $\bSigma_j$ with $d_\calS(\bS_i, \bSigma_j) \leq 2$.  Fix an $i = 1, \ldots ,m$ and let $j$ be the largest index where the equality 
\begin{equation}\label{eq:Equal} 
k_i\bigl(\Core(S_i, \Sigma)\bigr) \cup \kappa_j\bigl(\Core(S, \Sigma_j)\bigr) = \Core(S, \Sigma)
\end{equation}
still holds.  Note that the equation holds when $j =1$. But, since $\kappa_j\bigl(\Core(S, \Sigma_j)\bigr)$ eventually contains no squares (for instance, when $j = \mu$) and $k_i\bigl(\Core(S_i, \Sigma)\bigr)$ is a proper subset of $\Core(S, \Sigma)$ for each $i > 1$, there exists an index $j+1$ for which \eqref{eq:Equal} does not hold. 

We will show that $\Core(S_i, \Sigma_{j})$ contains a free edge.  By Lemma~\ref{Lem:Isolated}, this will complete the proof.  Let $s \times \sigma$ be a square in $\Core(S, \Sigma)$ that is not contained in $k_i\bigl(\Core(S_i, \Sigma)\bigr) \cup \kappa_{j+1}\bigl(\Core(S, \Sigma_{j+1})\bigr)$.  By \eqref{eq:Equal}, $s \times \sigma$ is contained in $\kappa_j\bigl(\Core(S, \Sigma_j)\bigr)$.   Thus a surgery on $\Sigma_j$ has deleted the intersection circle associated to this square. By \corref{Cor:Surgery}, $s \times \sigma$ is part of a maximal $\Sigma_{j}$--boundary rectangle.  That is, there is a component $\Pi \subseteq M - \Sigma$ for which $\sigma \in \partial \Pi$ so that the edge $(s,\Pi)$ is a boundary edge of $s \times \sigma$ but not the boundary edge of any other square in $\kappa_j\bigl(\Core(S, \Sigma_j)\bigr)$.  

We also know that $s \times \sigma$ is not contained in $k_i\bigl(\Core(S_i, \Sigma)\bigr)$.  Thus, if $(s,\Pi)$ is an edge in $k_{i}\bigl(\Core(S_{i},\Sigma)\bigr)$ then we have that $(s,\Pi)$ is a free edge in $k_{i}\bigl(\Core(S_{i},\Sigma)\bigr) \cap \kappa_{j}\bigl(\Core(S,\Sigma_{j})\bigr) \cong \Core(S_{i},\Sigma_{j})$ (Proposition~\ref{Prop:Isomorphism}).  If this is not the case, then there is some $i_{0} < i$ such that $(s,\Pi)$ lies between two squares $s \times \sigma'$ and $s \times \sigma''$ that are part of a maximal $S_{i_{0}}$--boundary rectangle in $\Core(S_{i_{0}},\Sigma)$ that is collapsed in the formation of $\Core(S_{i_{0}+1},\Sigma)$.  Then neither of these squares are in $k_{i}\bigl(\Core(S_{i},\Sigma)\bigr)$
at least one of these squares is not in $\kappa_{j}\bigl(\Core(S,\Sigma_{j})\bigr)$.  However, this would contradict \eqref{eq:Equal}.  Therefore, $(s,\Pi)$ is a free edge in $\Core(S_i, \Sigma_{j})$.
\end{proof}



%

\bibliography{bib}
\bibliographystyle{alpha}

\end{document}